\def\BigColSep{\setlength{\arraycolsep}{1pt}}
\newcommand{\supp}{\mathrm{supp}}
\DeclareMathOperator*{\Kont}{\scalerel*{\Huge\mathrm{K}}{\big(}}
\newtheorem{theorem}{Theorem}[section]
\newtheorem{lemma}[theorem]{Lemma}
\newtheorem{proposition}[theorem]{Proposition}
\newtheorem{corollary}[theorem]{Corollary}
\theoremstyle{definition}
\theoremstyle{remark}
\newtheorem{remark}[theorem]{Remark}
\numberwithin{equation}{section}
\title{Lattice paths, vector continued fractions, and resolvents of banded Hessenberg operators}
\author{A. L\'{o}pez-Garc\'{i}a \qquad V.A. Prokhorov}
\date{\today}
\begin{document}

\maketitle

\begin{abstract}
We give a combinatorial interpretation of vector continued fractions obtained by applying the Jacobi--Perron algorithm to a vector of $p\geq 1$ resolvent functions of a banded Hessenberg operator of order $p+1$. The interpretation consists in the identification of the coefficients in the power series expansion of the resolvent functions as weight polynomials associated with Lukasiewicz lattice paths in the upper half-plane. In the scalar case $p=1$ this reduces to the relation established by P. Flajolet and G. Viennot between Jacobi--Stieltjes continued fractions, their power series expansion, and Motzkin paths. We consider three classes of lattice paths, namely the Lukasiewicz paths in the upper half-plane, their symmetric images in the lower half-plane, and a third class of unrestricted lattice paths which are allowed to cross the $x$-axis. We establish a relation between the three families of paths by means of a relation between the associated generating power series. We also discuss the subcollection of Lukasiewicz paths formed by the partial $p$-Dyck paths, whose weight polynomials are known in the literature as genetic sums or generalized Stieltjes--Rogers polynomials, and express certain moments of bi-diagonal Hessenberg operators.   

\smallskip

\textbf{Keywords:} Lukasiewicz paths, vector continued fraction, Jacobi--Perron algorithm, generalized Stieltjes--Rogers polynomials, genetic sums, banded Hessenberg operator.

\smallskip

\textbf{MSC 2020:} Primary 05A15, 30B70; Secondary 47A10.
\end{abstract}

\section{Introduction}

\subsection{A background in the scalar case}

In this work we present a combinatorial interpretation for a class of vector continued fractions. We do this by establishing a relation between the collection of Lukasiewicz lattice paths in the upper half-plane and vector continued fraction expansions for vectors of resolvent functions of banded Hessenberg operators. Prior to detailing this connection and the main results of our work, we review some fundamental ideas in combinatorics and approximation theory that motivated our investigation. 

Let $\mu$ be a probability measure on the real line with infinite and compact support, and consider the sequence $(P_{n})_{n=0}^{\infty}$ of monic orthogonal polynomials associated with $\mu$. It is well known that these polynomials satisfy a three-term recurrence relation
\begin{equation}\label{ttrl}
xP_{n}(x)=P_{n+1}(x)+b_{n} P_{n}(x)+a_{n-1} P_{n-1}(x),\qquad n\geq 1,
\end{equation}
with initial values $P_{0}(x)=1$, $P_{1}(x)=x-b_{0}$. The coefficients $\{a_{n},b_{n}\}_{n\geq 0}$ in \eqref{ttrl} are the Jacobi parameters for $\mu$. The recurrence relation \eqref{ttrl} expressed in matrix form is
\[
x\begin{pmatrix}
P_{0}(x) \\
P_{1}(x) \\
P_{2}(x) \\
\vdots
\end{pmatrix}=\begin{pmatrix}
b_{0} & 1 & & & \\
a_{0} & b_{1} & 1 & \\
 & a_{1} & b_{2} & 1 &  \\
 &  &  \ddots & \ddots & \ddots    
\end{pmatrix}
\begin{pmatrix}
P_{0}(x) \\
P_{1}(x) \\
P_{2}(x) \\
\vdots
\end{pmatrix}
\]
and the tridiagonal matrix on the right-hand side is the infinite Jacobi matrix associated with $\mu$, which we denote $J=J(\mu)$. The spectral theorem for such operators (see e.g. \cite{NikSor}) establishes the relation
\begin{equation}\label{spectraltheo}
\int\frac{1}{z-x}\,d\mu(x)=\langle(zI-J)^{-1}e_{0},e_{0}\rangle,\qquad z\in\mathbb{C}\setminus\supp(\mu),
\end{equation}
where $(zI-J)^{-1}$ is the resolvent of $J$ and $\{e_{j}\}_{j=0}^{\infty} $ is the standard orthonormal basis in $\ell^{2}(\mathbb{Z}_{\geq 0})$. The identity \eqref{spectraltheo} implies 
\begin{equation}\label{eq:moments}
s_{n}:=\int x^{n}d\mu(x)=\langle J^{n} e_{0},e_{0}\rangle,\qquad n\geq 0,
\end{equation}
that is, the $(0,0)$-entry of the matrix $J^{n}$ coincides with the $n$-th moment of $\mu$. The relation \eqref{eq:moments} shows that the moments of $\mu$ are determined by the Jacobi parameters and can be expressed as polynomials in these parameters with positive integer coefficients. The first few expressions are
\begin{align*}
s_{1} & =b_{0}\\
s_{2} & =b_{0}^2+a_{0}\\
s_{3} & =b_{0}^3+2 a_{0} b_{0}+a_{0} b_{1}\\
s_{4} & =b_{0}^4+3 a_{0} b_{0}^2+2 a_{0} b_{0} b_{1}+a_{0} b_{1}^{2}+a_{0}^2+a_{0}a_{1}.
\end{align*}
An important result of Viennot \cite{Viennot} was a combinatorial interpretation of the moments $s_{n}$ in terms of Motzkin paths, which we now describe.

A Motzkin path is a lattice path with vertices $(n,m)$ in the lattice $\mathbb{Z}_{\geq 0}\times \mathbb{Z}_{\geq 0}$ that starts at $(0,0)$, ends on the $x$-axis, and has edges $v\rightarrow v'$ between adjacent vertices in the path of only three types:
\begin{align*}
& \mbox{upsteps}\,\,\,(n,m)\rightarrow(n+1,m+1),\\
& \mbox{level steps}\,\,\,(n,m)\rightarrow(n+1,m),\\
& \mbox{downsteps}\,\,\,(n,m)\rightarrow(n+1,m-1),\,\,\,\,m\geq 1.
\end{align*}
See an example in Fig.~\ref{Motzkinpath}. Let $\mathcal{M}_{n}$ denote the set of all Motzkin paths from $(0,0)$ to $(n,0)$. We give to each path $\gamma\in\mathcal{M}_{n}$ a weight $w(\gamma)$ defined as the product of the weights of the $n$ individual edges in the path, and the weight of an edge is defined by
\begin{align*}
w((n,m)\rightarrow (n+1,m+1)) & =1,\\
w((n,m)\rightarrow (n+1,m)) & = b_{m},\\
w((n,m)\rightarrow (n+1,m-1)) & =a_{m-1},\,\,\,\,m\geq 1.
\end{align*} 
Viennot \cite{Viennot} proved that for each $n\geq 1$ we have
\begin{equation}\label{Vieniden}
s_{n}=\sum_{\gamma\in\mathcal{M}_{n}}w(\gamma).
\end{equation}
The expression $\sum_{\gamma\in\mathcal{M}_{n}}w(\gamma)$ is therefore a polynomial in the Jacobi parameters, which we call weight polynomial associated with $\mathcal{M}_{n}$ (for more details see subsection~\ref{subsecvsmr}). The orthogonal polynomials $P_{n}(x)$ themselves can also be expressed by an identity of the form \eqref{Vieniden} (using a different class of lattice paths), see e.g. Stanton \cite{Stanton}.  

A central result in the theory of orthogonal polynomials and Pad\'{e} approximation is the fact that the moment generating function 
\[
\int\frac{1}{z-x}d\mu(x)=\sum_{n=0}^{\infty}\frac{s_{n}}{z^{n+1}}
\]
has the Jacobi--Stieltjes continued fraction expansion
\begin{equation}\label{Chebcontfrac}
\int\frac{1}{z-x}\,d\mu(x)=\cfrac{1}{z-b_{0}-\cfrac{a_0}{z-b_{1}-\cfrac{a_{1}}{\ddots}}}
\end{equation}
which in virtue of Markov's theorem \cite[pg. 89]{Markov} converges at every point $z$ in the complex plane outside the convex hull of $\supp(\mu)$. Remarkably, the convergents of the continued fraction are the rational functions
\[
\pi_{n}(z)=\frac{Q_{n}(z)}{P_{n}(z)}
\]
were the denominators are the orthogonal polynomials associated with $\mu$, and the numerators are the polynomials of the second kind
\[
Q_{n}(z)=\int\frac{P_{n}(z)-P_{n}(x)}{z-x}\,d\mu(x).
\] 
Observe that these polynomials are also solutions of \eqref{ttrl} with the initial values $Q_{0}(x)=0$, $Q_{1}(x)=1$. The convergents $\pi_{n}(z)$ are also the diagonal Pad\'{e} approximants of $\int(z-x)^{-1}d\mu(x)$.

\begin{figure}
\begin{center}
\begin{tikzpicture}[scale=0.7]
\draw[line width=1.5pt]  (-3.03,0) -- (10.5,0);
\draw[line width=1.5pt]  (-3,0) -- (-3,3.5);
\draw[line width=0.7pt] (-3,0) -- (-2,1) -- (-1,1) -- (0,0) -- (1,1) -- (2,1) -- (3,2) -- (4,1) -- (5,1) -- (6,2) -- (7,2) -- (8,1) -- (9,1) -- (10,0);
\draw [line width=0.5] (-2,0) -- (-2,-0.12);
\draw [line width=0.5] (-1,0) -- (-1,-0.12);
\draw [line width=0.5] (0,0) -- (0,-0.12);
\draw [line width=0.5] (1,0) -- (1,-0.12);
\draw [line width=0.5] (2,0) -- (2,-0.12);
\draw [line width=0.5] (3,0) -- (3,-0.12);
\draw [line width=0.5] (4,0) -- (4,-0.12);
\draw [line width=0.5] (5,0) -- (5,-0.12);
\draw [line width=0.5] (6,0) -- (6,-0.12);
\draw [line width=0.5] (7,0) -- (7,-0.12);
\draw [line width=0.5] (8,0) -- (8,-0.12);
\draw [line width=0.5] (9,0) -- (9,-0.12);
\draw [line width=0.5] (10,0) -- (10,-0.12);
\draw [line width=0.5] (-3,1) -- (-3.12,1);
\draw [line width=0.5] (-3,2) -- (-3.12,2);
\draw [line width=0.5] (-3,3) -- (-3.12,3);
\draw [dotted] (-3,1) -- (10.5,1);
\draw [dotted] (-3,2) -- (10.5,2);
\draw [dotted] (-3,3) -- (10.5,3);
\draw [dotted] (-2,0) -- (-2,3.5);
\draw [dotted] (-1,0) -- (-1,3.5);
\draw [dotted] (0,0) -- (0,3.5);
\draw [dotted] (1,0) -- (1,3.5);
\draw [dotted] (2,0) -- (2,3.5);
\draw [dotted] (3,0) -- (3,3.5);
\draw [dotted] (4,0) -- (4,3.5);
\draw [dotted] (5,0) -- (5,3.5);
\draw [dotted] (6,0) -- (6,3.5);
\draw [dotted] (7,0) -- (7,3.5);
\draw [dotted] (8,0) -- (8,3.5);
\draw [dotted] (9,0) -- (9,3.5);
\draw [dotted] (10,0) -- (10,3.5);
\draw (-2,-0.1) node[below, scale=0.8]{$1$};
\draw (-1,-0.1) node[below, scale=0.8]{$2$};
\draw (0,-0.1) node[below, scale=0.8]{$3$};
\draw (1,-0.1) node[below, scale=0.8]{$4$};
\draw (2,-0.1) node[below, scale=0.8]{$5$};
\draw (3,-0.1) node[below, scale=0.8]{$6$};
\draw (4,-0.1) node[below, scale=0.8]{$7$};
\draw (5,-0.1) node[below, scale=0.8]{$8$};
\draw (6,-0.1) node[below, scale=0.8]{$9$};
\draw (7,-0.1) node[below, scale=0.8]{$10$};
\draw (8,-0.1) node[below, scale=0.8]{$11$};
\draw (9,-0.1) node[below, scale=0.8]{$12$};
\draw (10,-0.1) node[below, scale=0.8]{$13$};
\draw (-3.1,1) node[left, scale=0.8]{$1$};
\draw (-3.1,2) node[left, scale=0.8]{$2$};
\draw (-3.1,3) node[left, scale=0.8]{$3$};
\end{tikzpicture}
\end{center}
\caption{Example of a Motzkin path. The weight of this path is $a_{0}^2\,a_{1}^2\, b_{1}^4\, b_{2}$.}
\label{Motzkinpath}
\end{figure}
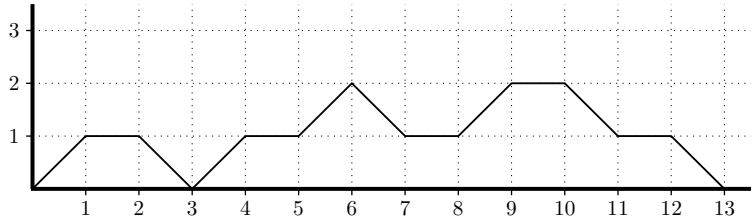

Let us now discuss the relation established by P. Flajolet between Jacobi--Stieltjes continued fractions, their power series expansion, and Motzkin paths. Consider two arbitrary sequences of complex numbers $(a_{n})_{n=0}^{\infty}$ and $(b_{n})_{n=0}^{\infty}$, and let $J$ be the associated infinite Jacobi matrix  
\[
J=\begin{pmatrix}
b_{0} & 1 & & & \\
a_{0} & b_{1} & 1 & \\
 & a_{1} & b_{2} & 1 &  \\
 &  &  \ddots & \ddots & \ddots    
\end{pmatrix}.
\]
Denote by
\[
m(z)=\sum_{n=0}^{\infty}\frac{A_{n}}{z^{n+1}}
\]
the formal power series generated by the sequence of weight polynomials
\[
A_{n}=\sum_{\gamma\in\mathcal{M}_{n}}w(\gamma).
\]
Let $\mathcal{M}_{n}^{(1)}:=\{\gamma+1: \gamma\in\mathcal{M}_{n}\}$, where $\gamma+1$ denotes the path obtained by shifting $\gamma$ vertically $1$ unit upwards. So the paths in $\mathcal{M}_{n}^{(1)}$ start at the point $(0,1)$, they end on the line $y=1$, and have no vertex below that line. Let $A_{n}^{(1)}$ be the weight polynomial associated with $\mathcal{M}_{n}^{(1)}$
\[
A_{n}^{(1)}=\sum_{\gamma\in\mathcal{M}_{n}^{(1)}}w(\gamma),
\]
and let
\[
m_{1}(z)=\sum_{n=0}^{\infty}\frac{A_{n}^{(1)}}{z^{n+1}}.
\]
Flajolet \cite{Fla} gave an explicit formula for the weight polynomials $A_{n}$ (which he called \textit{Jacobi-Rogers polynomials}) in terms of the Jacobi parameters and binomial coefficients (see Proposition 3.A in \cite{Fla}), and gave a combinatorial proof of the following relation
\begin{equation}\label{relmfunc}
m(z)=\frac{1}{z-b_{0}-a_{0}\,m_{1}(z)}.
\end{equation}
As a consequence of \eqref{relmfunc} we obtain the formal identity
\[
m(z)=\cfrac{1}{z-b_{0}-\cfrac{a_0}{z-b_{1}-\cfrac{a_{1}}{\ddots}}}.
\]
We present the proof of \eqref{relmfunc} now since it is a very good synthesis of the ideas (partitioning of collections of paths, decomposition of paths, effect of horizontal and vertical translations of paths in the weights) we will use in the more general vector setting.

First, observe that \eqref{relmfunc} is equivalent to $z m(z)-1=(b_{0}+a_{0}\,m_{1}(z))\,m(z)$. Identifying  coefficients in the series on both sides we see that this is in turn equivalent to 
\begin{equation}\label{relmoments}
A_{n}=b_{0} A_{n-1}+a_{0}\sum_{k=0}^{n-2} A_{k}^{(1)} A_{n-k-2},\qquad n\geq 1.
\end{equation}
The proof of the relation \eqref{relmoments} goes as follows. Subdivide $\mathcal{M}_{n}$ into two disjoint subsets $\mathcal{A}_{n}$ and $\mathcal{B}_{n}$, where $\mathcal{A}_{n}$ consists of those paths with first step the segment $(0,0)\rightarrow(1,0)$, and $\mathcal{B}_{n}$ consists of those paths with first step the segment $(0,0)\rightarrow(1,1)$. So we have
\[
A_{n}=\sum_{\gamma\in\mathcal{M}_{n}}w(\gamma)=\sum_{\gamma\in\mathcal{A}_{n}}w(\gamma)+\sum_{\gamma\in\mathcal{B}_{n}}w(\gamma).
\]
Since $w((0,0)\rightarrow(1,0))=b_{0}$, it is clear that $\sum_{\gamma\in\mathcal{A}_{n}}w(\gamma)=b_{0} A_{n-1}$. To see that $\sum_{\gamma\in\mathcal{B}_{n}} w(\gamma)=a_{0}\sum_{k=0}^{n-2}A_{k}^{(1)}A_{n-k-2}$, take an arbitrary path $\gamma\in\mathcal{B}_{n}$. Since $\gamma$ intersects the line $y=1$ and it ends on the $x$-axis, there must be at least one downstep in $\gamma$ with terminal point on the $x$-axis. Suppose that $(k+1,1)\rightarrow(k+2,0)$ is the \textit{first} downstep in $\gamma$ (from left to right) with such property. It is obvious that $0\leq k\leq n-2$. We subdivide $\gamma$ into four parts $\gamma_{j}$, $1\leq j\leq 4$, where $\gamma_{1}$ is the initial step $(0,0)\rightarrow(1,1)$ with weight $1$, $\gamma_{2}$ is the portion of $\gamma$ on the interval $1\leq x\leq k+1$ (which can be identified with a horizontal translation of a path in $\mathcal{M}_{k}^{(1)}$), $\gamma_{3}$ is the special downstep $(k+1,1)\rightarrow(k+2,0)$ with weight $a_{0}$, and $\gamma_{4}$ is the portion of $\gamma$ on the interval $k+2\leq x\leq n$, which is a horizontal translation of a path in $\mathcal{M}_{n-k-2}$. If we do this subdivision for each $\gamma\in\mathcal{B}_{n}$, and take into account that the weight of a path is invariant under horizontal translations, we get 
\[
\sum_{\gamma\in\mathcal{B}_{n}}w(\gamma)=\sum_{\gamma\in\mathcal{B}_{n}}w(\gamma_{1})w(\gamma_{2})w(\gamma_{3})w(\gamma_{4})=a_{0}\sum_{\gamma\in\mathcal{B}_{n}}w(\gamma_2)w(\gamma_4)=a_{0}\sum_{k=0}^{n-2} A_{k}^{(1)}A_{n-k-2},
\]  
and the proof of \eqref{relmoments} is finished.

It follows from our discussion that for each $n\geq 0$ we have
\begin{align*}
A_{n} & =\langle J^n e_{0},e_{0}\rangle,\\
A_{n}^{(1)} & =\langle J_{1}^n e_{0},e_{0}\rangle,
\end{align*}
where $J_{1}$ denotes the infinite Jacobi matrix obtained by removing the first row and the first column of $J$.

\subsection{The vector setting and our main results}\label{subsecvsmr}

In the late 1970s, the approximation theory school led by A. A. Gonchar and E. M. Nikishin began a systematic study of the asymptotic properties of Hermite-Pad\'{e} approximants to systems of analytic functions. One of the first works in this area was the influential paper of Nikishin \cite{Nik}, in which he obtained an analogue of Markov's theorem for Hermite-Pad\'{e} approximants to systems of functions $(\phi_{0}(z),\ldots,\phi_{p-1}(z))$ of the form
\begin{equation}\label{Markovfunc}
\phi_{j}(z)=\int\frac{d\mu_{j}(x)}{z-x},\qquad 0\leq j\leq p-1,
\end{equation}
where $(\mu_{0},\ldots,\mu_{p-1})$ is a vector of $p\geq 2$ positive measures on the real line that form a so-called Nikishin system. 

In many problems of interest the functions \eqref{Markovfunc} can be identified as resolvent functions of a banded Hessenberg operator. Kalyagin \cite{Kal,Kal2} investigated the problem of Hermite-Pad\'{e} approximation to such functions and obtained a vector continued fraction representation for the approximants and the vector of resolvent functions. We describe now his result, which is directly related to our work.

In this paper, $\mathbb{C}((z^{-1}))$ will denote the set of all formal power series
\[
\sum_{n\in\mathbb{Z}}\frac{a_{n}}{z^{n}}
\]
with complex coefficients such that only finitely many $a_{n}$ with $n<0$ are non-zero. This set is an algebraic field with the usual addition and product of series. 

Let $p\geq 1$ be an integer, which throughout the paper will remain fixed. For each $0\leq k\leq p$, let $(a_{n}^{(k)})_{n\in\mathbb{Z}}$ be a bi-infinite sequence of complex numbers. With the positive part of these sequences (later the whole sequence will be used) we construct the infinite matrix 
\begin{equation}\label{Hmatrix}
H=(h_{i,j})_{i,j=0}^{\infty}=\begin{pmatrix}
a_{0}^{(0)} & 1 & & & \\
\vdots & a_{1}^{(0)} & 1 & \\
a_{0}^{(p)} & \vdots & a_{2}^{(0)} & \ddots \\
 & a_{1}^{(p)} & \vdots & \ddots \\
 &  & a_{2}^{(p)} & \\
 & & & \ddots \\
 &   
\end{pmatrix}
\end{equation}
with entries
\[
\begin{cases}
h_{j-1,j}=1, & j\geq 1,\\
h_{j+k,j}=a_{j}^{(k)}, & 0\leq k\leq p, \quad j\geq 0,\\
h_{i,j}=0, & \mbox{otherwise}.
\end{cases}
\]
Note that this is a banded lower Hessenberg matrix. Recall that $\{e_{j}\}_{j=0}^{\infty}$ denotes the standard basis in $\ell^{2}(\mathbb{Z}_{\geq 0})$, and consider the resolvent functions
\begin{equation}\label{resolvfuncphi}
\phi_{j}(z):=\langle(zI-H)^{-1}e_{j},e_{0}\rangle=\sum_{n=0}^{\infty}\frac{\langle H^{n}e_{j},e_{0}\rangle}{z^{n+1}},\qquad 0\leq j\leq p-1,
\end{equation}
understood as formal power series. 

Hermite-Pad\'{e} approximants to the functions \eqref{resolvfuncphi} can be constructed as follows. Let $H_{n}=(h_{i,j})_{i,j=0}^{n-1}$ denote the principal $n\times n$ truncation of $H$, and let $H_{n}^{[k]}$, $1\leq k\leq p$, be the submatrix of $H_{n}$ obtained by removing the first $k$ rows and columns of $H_{n}$. For each $n\geq 0$ we define the characteristic polynomials
\begin{equation}\label{charpoly}
\begin{aligned}
q_{n}(z) & :=\det(z I_{n}-H_{n}),\\
q_{n,k}(z) & :=\det(z I_{n-k}-H_{n}^{[k]}),\qquad 1\leq k\leq p.
\end{aligned}
\end{equation} 
Kalyagin \cite{Kal,Kal2} proved that the vector of rational functions
\begin{equation}\label{vectHPapprox}
\left(\frac{q_{n,1}(z)}{q_{n}(z)},\frac{q_{n,2}(z)}{q_{n}(z)},\ldots,\frac{q_{n,p}(z)}{q_{n}(z)}\right)
\end{equation}
is an Hermite-Pad\'{e} approximant at infinity of order $n$ to the system $(\phi_{0}(z),\ldots,\phi_{p-1}(z))$, that is
\begin{equation}\label{HPproperty}
q_{n}(z)\,\phi_{k}(z)-q_{n,k+1}(z)=O(z^{-n_{k}-1}),\quad z\rightarrow\infty,\quad 0\leq k\leq p-1,
\end{equation}
where $n_{k}=\lfloor (n-k)/p\rfloor$. If the resolvent functions $\phi_{k}(z)$ have the form \eqref{Markovfunc}, then it easily follows from \eqref{HPproperty} that the polynomials $q_{n}$ are multiple orthogonal polynomials with respect to $(\mu_{0},\ldots,\mu_{p-1})$, see \cite{NikSor}. It is also easy to see that the $p+1$ sequences in \eqref{charpoly} form a basis for the solution space of the difference equation 
\[
z y_{n}=y_{n+1}+a_{n}^{(0)}\,y_{n}+a_{n-1}^{(1)}\,y_{n-1}+\cdots+a_{n-p}^{(p)}\, y_{n-p},\qquad n\geq p.
\]
If $p=1$ and $H=J$ is a Jacobi matrix, then $q_{n,1}/q_{n}$ is the $n$-th diagonal Pad\'{e} approximant to the function $\langle(zI-J)^{-1}e_{0},e_{0}\rangle$.

The construction of a vector continued fraction is based on the following division operation in $\mathbf{F}^{p}$, $\mathbf{F}=\mathbb{C}((z^{-1}))$. If $(f_{1},\ldots,f_{p})$ and $(g_{1},\ldots,g_{p})$ are two vectors of formal power series and $g_{p}\neq 0$, then we define
\begin{equation}\label{divoper}
\frac{(f_{1},\ldots,f_{p})}{(g_{1},\ldots,g_{p})}:=\left(\frac{f_{1}}{g_{p}},\frac{f_{2}\,g_{1}}{g_{p}},\frac{f_{3}\,g_{2}}{g_{p}},\ldots,\frac{f_{p}\,g_{p-1}}{g_{p}}\right).
\end{equation}
If $\mathbf{a}_{1},\ldots,\mathbf{a}_{n}$ and $\mathbf{b}_{1},\ldots,\mathbf{b}_{n}$ are now vectors in $\textbf{F}^{p}$, then we can form the finite continued fraction
\begin{equation}\label{fvcf}
\Kont_{m = 1}^{n}\Bigl(\frac{\mathbf{a}_{m}}{\mathbf{b}_{m}}\Bigr)\coloneqq\cfrac{\mathbf{a}_1}{\mathbf{b}_1+\cfrac{\mathbf{a}_2}{\mathbf{b}_2+\cfrac{\mathbf{a}_3}{\ddots \raisebox{-0.9em}{\ensuremath{+\cfrac{\mathbf{a}_n}{\mathbf{b}_n}}}}}}
\end{equation}
provided that each division can be performed. 

Before we describe Kalyagin's formulae, we need some additional definitions. Let $H^{(k)}$, $k\geq 0$, denote the infinite Hessenberg matrix obtained by deleting the first $k$ rows and columns of the matrix $H$ in \eqref{Hmatrix}, and analogously to \eqref{resolvfuncphi} we define
\[
\phi_{j}^{(k)}(z):=\langle(zI-H^{(k)})^{-1}e_{j},e_{0}\rangle=\sum_{n=0}^{\infty}\frac{\langle (H^{(k)})^{n}e_{j},e_{0}\rangle}{z^{n+1}},\qquad 0\leq j\leq p-1.
\]
Let
\begin{equation}\label{cks}
\mathbf{c}_{k}:=\begin{cases}
(1,1,\ldots,1), & 1\leq k\leq p,\\[0.5em]
(-a_{k-p-1}^{(p)},1,\ldots,1), & k\geq p+1,
\end{cases}
\end{equation}
\begin{equation}\label{dks}
\mathbf{d}_{k}(z):=\begin{cases}
(0,\ldots,0,-a_{0}^{(k-1)},-a_{1}^{(k-2)},\ldots,-a_{k-2}^{(1)},z-a_{k-1}^{(0)}), & 1\leq k\leq p,\\[0.5em]
(-a_{k-p}^{(p-1)},-a_{k-p+1}^{(p-2)},-a_{k-p+2}^{(p-3)},\ldots,-a_{k-2}^{(1)},z-a_{k-1}^{(0)}), & k\geq p+1,
\end{cases}
\end{equation}
\begin{equation}\label{vks}
\mathbf{v}_{k}(z):=\begin{cases}
(\phi_{0}^{(k)},\ldots,\phi_{p-k-1}^{(k)},-\sum_{j=k}^{p} a_{0}^{(j)}\phi_{j-k}^{(k)},\ldots,-\sum_{j=1}^{p} a_{k-1}^{(j)} \phi_{j-1}^{(k)}), & 0\leq k\leq p,\\[0.5em]
(-a_{k-p}^{(p)} \phi_{0}^{(k)},-\sum_{j=p-1}^{p} a_{k-p+1}^{(j)} \phi_{j-p+1}^{(k)},\ldots,-\sum_{j=1}^{p} a_{k-1}^{(j)} \phi_{j-1}^{(k)}), & k\geq p+1.
\end{cases}
\end{equation}

\begin{theorem}[Kalyagin \cite{Kal,Kal2}]
For every $n\geq 1$ we have
\begin{align}
(\phi_{0}(z),\ldots,\phi_{p-1}(z)) & =\Kont_{j = 1}^{n}\Bigl(\frac{\mathbf{c}_{j}}{\widetilde{\mathbf{d}}_{j}(z)}\Bigr),\label{vcfphifinite}\\
\left(\frac{q_{n,1}(z)}{q_{n}(z)},\frac{q_{n,2}(z)}{q_{n}(z)},\ldots,\frac{q_{n,p}(z)}{q_{n}(z)}\right) & =\Kont_{j = 1}^{n}\Bigl(\frac{\mathbf{c}_{j}}{\mathbf{d}_{j}(z)}\Bigr),\label{vcfHP}
\end{align}
where $\widetilde{\mathbf{d}}_{j}(z)=\mathbf{d}_{j}(z)$ if $j\leq n-1$ and $\widetilde{\mathbf{d}}_{n}(z)=\mathbf{d}_{n}(z)+\mathbf{v}_{n}(z)$.
\end{theorem}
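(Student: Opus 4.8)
Evaluating \eqref{vks} at $k=0$ gives $\mathbf{v}_0(z)=(\phi_0,\dots,\phi_{p-1})$, the left-hand side of \eqref{vcfphifinite}, so the structure of the proof is to reduce both identities to a single one-step recursion for the resolvent vectors and then to unfold the continued fraction. The plan is to prove that for every $k\geq 1$
\[
\mathbf{v}_{k-1}(z)=\frac{\mathbf{c}_k}{\mathbf{d}_k(z)+\mathbf{v}_k(z)},
\]
the quotient being the division operation \eqref{divoper}. Granting this, \eqref{vcfphifinite} follows by substituting the relation into itself $n$ times: the substitution telescopes $\mathbf{v}_0$ into $\Kont_{j=1}^{n}\bigl(\mathbf{c}_j/\widetilde{\mathbf{d}}_j\bigr)$, with the tail $\mathbf{v}_n$ surviving only inside the last denominator, which is exactly the prescription $\widetilde{\mathbf{d}}_n=\mathbf{d}_n+\mathbf{v}_n$. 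This is the vector analogue of deducing the scalar continued fraction from \eqref{relmfunc}, and it governs the whole argument.

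The engine behind the one-step relation is a resolvent recursion connecting $H^{(k-1)}$ with $H^{(k)}$. Writing $zI-H^{(k-1)}$ as a $1+\infty$ block matrix whose lower-right block is $zI-H^{(k)}$, whose corner is $z-a_{k-1}^{(0)}$, whose first row past the corner is $(-1,0,0,\dots)$, and whose first column past the corner is $(-a_{k-1}^{(1)},\dots,-a_{k-1}^{(p)},0,\dots)$, the Schur-complement formula for the top-left entry of the inverse yields
\[
\phi_0^{(k-1)}=\frac{1}{\,z-a_{k-1}^{(0)}-\sum_{i=1}^{p}a_{k-1}^{(i)}\,\phi_{i-1}^{(k)}\,},
\]
while the first-row block gives $\phi_j^{(k-1)}=\phi_0^{(k-1)}\,\phi_{j-1}^{(k)}$ for $j\geq 1$. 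These two identities generalize the scalar relation $m=1/(z-b_0-a_0m_1)$ underlying \eqref{relmfunc}; since everything lives in $\mathbb{C}((z^{-1}))$, the block inversion is read formally and, if desired, each identity is confirmed by matching the coefficients of the moment expansions $\phi_j^{(k)}=\sum_n\langle (H^{(k)})^n e_j,e_0\rangle z^{-n-1}$, a finite check at every order.

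With these recursions I would verify the one-step relation componentwise. The last entry of $\mathbf{d}_k+\mathbf{v}_k$ equals $z-a_{k-1}^{(0)}-\sum_{j=1}^{p}a_{k-1}^{(j)}\phi_{j-1}^{(k)}$ in all cases, hence equals $1/\phi_0^{(k-1)}$; this is the common denominator $g_p$ in \eqref{divoper} and it supplies the factor $\phi_0^{(k-1)}$ that the recursion demands. For the first component one checks that $(\mathbf{c}_k)_1/g_p$ reproduces $(\mathbf{v}_{k-1})_1$, and for $m\geq 2$, since $(\mathbf{c}_k)_m=1$, the claim becomes
\[
(\mathbf{v}_{k-1})_m=\phi_0^{(k-1)}\bigl[(\mathbf{d}_k)_{m-1}+(\mathbf{v}_k)_{m-1}\bigr];
\]
splitting off the lowest-index term, where $\phi_0^{(k-1)}$ appears directly, and applying $\phi_j^{(k-1)}=\phi_0^{(k-1)}\phi_{j-1}^{(k)}$ to the rest makes the index-$(m-1)$ entries of $\mathbf{d}_k$ and $\mathbf{v}_k$ appear on the nose. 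This bookkeeping is routine once the indices are aligned, but it must be run separately in the three regimes $1\leq k\leq p$, $k=p+1$, and $k\geq p+2$, because the piecewise definitions \eqref{cks}--\eqref{vks} change shape across $k=p$ and $k=p+1$. The index juggling across this transition region is the main obstacle: it is where the staggered subscripts and superscripts of \eqref{dks} and \eqref{vks} must line up exactly, and where an error is easiest to make.

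For the Hermite--Pad\'{e} identity \eqref{vcfHP} I would argue on the convergents rather than the tails. Lifting the vector continued fraction to a product of $(p+1)\times(p+1)$ transfer matrices exhibits the $p$ numerators and the common denominator of $\Kont_{j=1}^{n}\bigl(\mathbf{c}_j/\mathbf{d}_j\bigr)$ as solutions of a linear recursion in $n$; the plan is to show this recursion is precisely the difference equation $zy_n=y_{n+1}+a_n^{(0)}y_n+\cdots+a_{n-p}^{(p)}y_{n-p}$ obeyed by the characteristic polynomials \eqref{charpoly}, and then to match initial data, a border/cofactor expansion of $\det(zI_n-H_n)$ identifying the denominator as $q_n$ and the numerators as $q_{n,1},\dots,q_{n,p}$. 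Equivalently, \eqref{vcfHP} follows from \eqref{vcfphifinite} by discarding the tail $\mathbf{v}_n$: one checks that dropping $\mathbf{v}_n$ turns the exact resolvent vector into a rational vector whose entries meet $(\phi_0,\dots,\phi_{p-1})$ to the order $O(z^{-n_k-1})$ required by \eqref{HPproperty}, whereupon uniqueness of the Hermite--Pad\'{e} approximant forces equality with $(q_{n,1}/q_n,\dots,q_{n,p}/q_n)$. Either route reduces \eqref{vcfHP} to the same determinantal recursion, so the two displayed identities are two readings of one computation.
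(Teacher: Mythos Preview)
Your argument for \eqref{vcfphifinite} follows the same architecture as the paper's: both reduce everything to the one-step relation $\mathbf{v}_{k-1}=\mathbf{c}_k/(\mathbf{d}_k+\mathbf{v}_k)$ (the paper's Lemma in Section~\ref{sec:resolvcf}, written there with index shifted by one) and then unfold. The only substantive difference is how the underlying resolvent identities
\[
\phi_0^{(k-1)}=\frac{1}{z-a_{k-1}^{(0)}-\sum_{j=1}^{p}a_{k-1}^{(j)}\phi_{j-1}^{(k)}},\qquad \phi_j^{(k-1)}=\phi_0^{(k-1)}\phi_{j-1}^{(k)}
\]
are obtained. You derive them by a Schur-complement/block-inverse computation on $zI-H^{(k-1)}$; the paper instead proves the equivalent relations \eqref{relA0kAkpone} combinatorially via decompositions of Lukasiewicz paths (Theorem~\ref{theo:relseriesAs}) and then identifies $A_j^{(q)}=\phi_j^{(q)}$ through Proposition~\ref{prop:eqresollat}. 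Your route is shorter and purely algebraic; the paper's route is the point of the paper, since its declared aim is a combinatorial interpretation of the continued fraction. The componentwise verification of the one-step relation is carried out in the paper in two regimes ($0\le k\le p-1$ and $k\ge p$), matching your three regimes up to a harmless regrouping.

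On \eqref{vcfHP}: the paper does not supply its own proof of this part --- it attributes the full theorem to Kalyagin and only reproves \eqref{vcfphifinite} combinatorially. Your two sketches (transfer-matrix recursion matched to the characteristic-polynomial difference equation, or dropping the tail and invoking uniqueness of the Hermite--Pad\'e approximant) are both reasonable outlines, but note that the second needs a normality/uniqueness hypothesis that is not stated here, so the first route is the safer one to carry out in detail.
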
 

As a consequence of \eqref{vcfphifinite} and \eqref{vcfHP} we have the formal identity
\begin{equation}\label{vcfphi}
(\phi_{0}(z),\ldots,\phi_{p-1}(z))=\Kont_{j = 1}^{\infty}\Bigl(\frac{\mathbf{c}_{j}}{\mathbf{d}_{j}(z)}\Bigr).
\end{equation}
Observe that if $p=1$, then $\mathbf{c}_{1}=1$, $\mathbf{c}_{k}=-a_{k-2}^{(1)}$, $k\geq 2$, and $\mathbf{d}_{k}(z)=z-a_{k-1}^{(0)}$, $k\geq 1$, so \eqref{vcfphi} reduces to \eqref{Chebcontfrac}.

Vector continued fractions originated in the works of Jacobi \cite{Jac} and Perron \cite{Perron} in analytic number theory, where they investigated algorithms that generate simultaneous rational approximants to vectors of real constants (see also \cite{Bern} for an interesting account of that work). In function theory, the so-called Jacobi--Perron algorithm gives a continued fraction expansion for a vector of power series, where the coefficients in the continued fraction are vectors of polynomials. Such algorithm is used to obtain \eqref{vcfphi}. The literature on functional vector continued fractions is very extensive; some important works on their algebraic and convergence properties are \cite{AptKalVan,BCK,Kal,Kal2,Parus,VanIseg,VanIseg4}, see also \cite{NikSor} and references therein.

In this paper we give a combinatorial proof of the identity \eqref{vcfphifinite} based on a relation between the moments $\langle (H^{(k)})^n e_{j},e_{0}\rangle$ and the class of partial $p$-Lukasiewicz paths with vertices in the lattice $\mathbb{Z}_{\geq 0}\times\mathbb{Z}_{\geq 0}$. The proof is analogous to Flajolet's justification of \eqref{relmfunc} based on Viennot's identity \eqref{Vieniden}. The relation in question is the identification of the moments $\langle (H^{(k)})^n e_{j},e_{0}\rangle$ as \emph{weight polynomials} (polynomials in the parameters $a_{n}^{(k)}$) associated with certain collections of Lukasiewicz paths. The vector continued fraction is obtained as a consequence of some identities between the generating functions of these weight polynomials. The generating functions are defined as formal power series in $\mathbb{C}((z^{-1}))$ and the identities are described in Theorem~\ref{theo:relseriesAs}. 

In this paper we also consider two more classes of lattice paths; one on the lattice $\mathbb{Z}_{\geq 0}\times \mathbb{Z}$ (i.e., with paths allowed to go above and below the $x$-axis) and the other one on the lattice $\mathbb{Z}_{\geq 0}\times \mathbb{Z}_{\leq 0}$ (i.e., with paths in the lower half-plane). The third class is in bijection with the class of Lukasiewicz paths and can be obtained by a symmetry transformation of those paths. In Theorem~\ref{theo:relseriesWs} we obtain a relation between the generating functions associated with the three classes of lattice paths considered. This relation can also be interpreted as a relation between resolvent functions of two-sided and one-sided banded Hessenberg operators.   

We introduce now the definitions necessary to state our main results.

Recall that we have fixed bi-infinite sequences of complex numbers $(a^{(k)}_{n})_{n\in\mathbb{Z}}$, $0\leq k\leq p$. The lattice paths we construct have vertices on the set $\mathcal{V}:=\mathbb{Z}_{\geq 0}\times \mathbb{Z}$ and edges that belong to the set
\[
\mathcal{E}:=\mathcal{E}_{u}\cup\mathcal{E}_{\ell}\cup\mathcal{E}_{d},
\]
where
\begin{equation}\label{defsteps}
\begin{aligned}
\mathcal{E}_{u} & :=\{(n,m)\rightarrow (n+1,m+1) : (n,m)\in\mathcal{V}\}\\
\mathcal{E}_{\ell} & :=\{(n,m)\rightarrow (n+1,m) : (n,m)\in\mathcal{V}\}\\
\mathcal{E}_{d} & :=\{(n,m)\rightarrow (n+1,m-j) : (n,m)\in\mathcal{V},\,\,\,1\leq j\leq p\}.
\end{aligned}
\end{equation}
Here, as before, $v\rightarrow v'$ indicates the edge from vertex $v$ to vertex $v'$. We will refer to the edges in $\mathcal{E}_{u}$, $\mathcal{E}_{\ell}$, and $\mathcal{E}_{d}$ as the \emph{upsteps}, \emph{level steps}, and \emph{downsteps} respectively. Edges are generically called \emph{steps}. Observe that the difference in height between the vertices in a downstep is a value $j\in\{1,\ldots,p\}$. A lattice path is a finite sequence of steps
\begin{equation}\label{eq:path}
\gamma=e_{1}e_{2}\cdots e_{k},
\end{equation}
where for each $1\leq j\leq k-1,$ the final vertex of $e_{j}$ conicides with the initial vertex of $e_{j+1}$. We say that the path in \eqref{eq:path} has \emph{length} $k$. A path of length zero is simply a vertex in $\mathcal{V}$. If $(n,m)\in\mathcal{V}$ is a vertex in the path $\gamma$, we say that $\gamma$ has height $m$ at $n$. We define $\max(\gamma)$ to be the maximum of the heights of all vertices in $\gamma$, and $\min(\gamma)$ to be the minimum of the heights of all vertices in $\gamma$. Also, if $q\in\mathbb{Z}$ and $\gamma$ is a path, we denote by $\gamma+q$ the path obtained by shifting $\gamma$ upwards (or downwards) $|q|$ units if $q$ is positive (or negative). 

To each step we associate a \emph{weight} as follows:
\begin{equation}\label{weightedges}
\begin{aligned}
w((n,m)\rightarrow (n+1,m+1)) & =1,\\
w((n,m)\rightarrow (n+1,m-j)) & =a_{m-j}^{(j)},\qquad 0\leq j\leq p.
\end{aligned}
\end{equation}
Note that all upsteps have weight 1, regardless of their position. If $\gamma$ is a path, its weight is defined by
\begin{equation}\label{weightpath}
w(\gamma)=\prod_{e\subset\gamma}w(e),
\end{equation}
where the product runs over the different steps of $\gamma$. The weight of a path of length zero is by definition $1$.

If $\mathcal{L}$ is a finite collection of lattice paths, the expression $\sum_{\gamma\in\mathcal{L}}w(\gamma)$ will be called the \textit{weight polynomial associated with $\mathcal{L}$}. If $\mathcal{L}$ is the empty collection, its weight polynomial is understood to be zero. 

Given $n\in\mathbb{Z}_{\geq 0}$ and $j\in\{0,\ldots,p\}$, let $\mathcal{P}_{[n,j]}$ denote the collection of all paths of length $n$, with starting point $(0,0)$ and final point $(n,j)$. We also define $\mathcal{D}_{[n,j]}$ as the collection of all paths $\gamma$ of length $n$, with initial point $(0,0)$, final point $(n,j)$, and satisfying $\min(\gamma)=0$. So $\mathcal{D}_{[n,j]}$ is the subcollection of $\mathcal{P}_{[n,j]}$ consisting of those paths with no point below the $x$-axis. We shall use the notations $\mathcal{P}_{n}=\mathcal{P}_{[n,0]}$ and $\mathcal{D}_{n}=\mathcal{D}_{[n,0]}$. In Figs.~\ref{Lukapath}--\ref{bridge} we illustrate some examples of paths in the collections defined.

We need another class of lattice paths. For each $n\in\mathbb{Z}_{\geq 0}$ and $j\in\{0,\ldots,p\}$, let $\widehat{\mathcal{D}}_{[n,j]}$ denote the collection of all paths $\gamma$ of length $n$, with initial point $(0,-j)$, final point $(n,0)$, and satisfying $\max(\gamma)=0$. Note that $\widehat{\mathcal{D}}_{[n,0]}$ is a subcollection of $\mathcal{P}_{[n,0]}$, but $\widehat{\mathcal{D}}_{[n,j]}$ is not a subcollection of $\mathcal{P}_{[n,j]}$ if $j\geq 1$. There is a one-to-one correspondence between the sets $\mathcal{D}_{[n,j]}$ and $\widehat{\mathcal{D}}_{[n,j]}$, established by the map $\gamma\mapsto\widehat{\gamma}$ that is defined as follows: given a path $\gamma\in\mathcal{D}_{[n,j]}$, it is first reflected with respect to the real axis, and the result is then reflected with respect to the vertical line $x=n/2$, to obtain the path $\widehat{\gamma}\in\widehat{\mathcal{D}}_{[n,j]}$. Note that under this transformation, the image of a step that belongs to one of the sets in \eqref{defsteps} is a step that belongs to the same set.

In the theory of lattice paths, different terminologies are used to refer to the paths in the collections $\mathcal{D}_{[n,j]}$ and $\mathcal{P}_{[n,j]}$. In the recent works \cite{PetSok,PetSokZhu}, the authors use the terms \textit{$p$-Lukasiewicz paths} and \textit{partial $p$-Lukasiewicz paths} to refer to the paths in $\mathcal{D}_{n}$ and $\mathcal{D}_{[n,j]}$, $j>0$, respectively. In \cite{BanFla}, the paths in $\mathcal{P}_{n}$, $\mathcal{P}_{[n,j]}$, $\mathcal{D}_{n}$, and $\mathcal{D}_{[n,j]}$, are generically called \textit{bridges}, \textit{walks}, \textit{excursions}, and \textit{meanders}, respectively.

The weight polynomials associated with the collections $\mathcal{P}_{[n,j]}$, $\mathcal{D}_{[n,j]}$, and $\widehat{\mathcal{D}}_{[n,j]}$ are denoted
\begin{align}
W_{[n,j]} & :=\sum_{\gamma\in\mathcal{P}_{[n,j]}}w(\gamma),\label{eq:def:Wnj}\\
A_{[n,j]} & :=\sum_{\gamma\in\mathcal{D}_{[n,j]}}w(\gamma),\label{eq:def:Anj}\\
B_{[n,j]} & :=\sum_{\gamma\in\widehat{\mathcal{D}}_{[n,j]}}w(\gamma).\label{eq:def:Bnj}
\end{align}

In our analysis we will also need the following polynomials. For an integer $q\geq 0$, let
\begin{align}
A_{[n,j]}^{(q)} & :=\sum_{\gamma\in\mathcal{D}_{[n,j]}}w(\gamma+q),\label{eq:def:Anjshifted}\\
B_{[n,j]}^{(q)} & :=\sum_{\gamma\in\widehat{\mathcal{D}}_{[n,j]}}w(\gamma-q).\label{eq:def:Bnjq}
\end{align}
Thus, $A_{[n,j]}^{(q)}$ and $B_{[n,j]}^{(q)}$ are the weight polynomials associated with the collections of paths
\begin{align}
\mathcal{D}_{[n,j]}^{(q)} & :=\{\gamma+q: \gamma\in\mathcal{D}_{[n,j]}\}\qquad q\in\mathbb{Z}_{\geq 0},\label{def:Dnjq}\\
\widehat{\mathcal{D}}_{[n,j]}^{(q)} & :=\{\gamma-q: \gamma\in\widehat{\mathcal{D}}_{[n,j]}\}\qquad q\in\mathbb{Z}_{\geq 0}.\label{eq:defDnjqtilde}
\end{align}

It will be convenient for us to define
\begin{align}
A_{[n,j]}=B_{[n,j]}=W_{[n,j]} & =0\qquad n<0,\quad 0\leq j\leq p,\label{def:extABW}\\
A_{[n,j]}^{(q)}=B_{[n,j]}^{(q)} & =0\qquad n<0,\quad 0\leq j\leq p,\quad q\geq 0.\label{def:extABWq}
\end{align}

We introduce now formal power series generated by the sequences of weight polynomials defined. For each $0\leq j\leq p$, let
\begin{align}
W_{j}(z) & :=\sum_{n=0}^{\infty}\frac{W_{[n,j]}}{z^{n+1}}=\sum_{n\in\mathbb{Z}}\frac{W_{[n,j]}}{z^{n+1}}\label{eq:def:seriesWj}\\
A_{j}(z) & :=\sum_{n=0}^{\infty}\frac{A_{[n,j]}}{z^{n+1}}=\sum_{n\in\mathbb{Z}}\frac{A_{[n,j]}}{z^{n+1}} \label{eq:def:seriesAj}\\
B_{j}(z) & :=\sum_{n=0}^{\infty}\frac{B_{[n,j]}}{z^{n+1}}=\sum_{n\in\mathbb{Z}}\frac{B_{[n,j]}}{z^{n+1}}\label{eq:def:seriesBj}
\end{align}
and for an integer $q\geq 0$, we define
\begin{align}
A_{j}^{(q)}(z) & :=\sum_{n=0}^{\infty}\frac{A_{[n,j]}^{(q)}}{z^{n+1}}=\sum_{n\in\mathbb{Z}}\frac{A_{[n,j]}^{(q)}}{z^{n+1}}\label{eq:def:seriesAjq}\\
B_{j}^{(q)}(z) & :=\sum_{n=0}^{\infty}\frac{B_{[n,j]}^{(q)}}{z^{n+1}}=\sum_{n\in\mathbb{Z}}\frac{B_{[n,j]}^{(q)}}{z^{n+1}}.
\label{eq:def:seriesBjq}
\end{align}
Observe that if $0\leq n<j$, then $\mathcal{P}_{[n,j]}=\emptyset$, and so $W_{[n,j]}=0$. If $n=j$, then $\mathcal{P}_{[j,j]}$ contains only one path consisting of consecutive upsteps connecting the points $(0,0)$ and $(j,j)$, and so $W_{[j,j]}=1$. Therefore
\[
W_{j}(z)=\frac{1}{z^{j+1}}+O\left(\frac{1}{z^{j+2}}\right).
\]
We can say the same about the other formal power series defined in \eqref{eq:def:seriesAj}--\eqref{eq:def:seriesBjq}. Our first main result is the following.

\begin{theorem}\label{theo:relseriesAs}
The following relations hold between the series defined in \eqref{eq:def:seriesAj} and \eqref{eq:def:seriesAjq}:
\begin{align}
A_{0}(z) & =\frac{1}{z-a_{0}^{(0)}-\sum_{j=1}^{p}a_{0}^{(j)}\,A_{j-1}^{(1)}(z)}\label{eq:A0Aks}\\
A_{j}(z) & =A_{0}(z)\,A^{(1)}_{j-1}(z)\qquad 1\leq j\leq p.\label{eq:AjA0Ajm}
\end{align}
\end{theorem}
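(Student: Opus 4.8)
The plan is to imitate Flajolet's strategy from \eqref{relmoments}: reduce each generating-function identity to a convolution relation among the weight polynomials $A_{[n,j]}$ and $A_{[n,j]}^{(1)}$, and prove those relations by partitioning the path collection $\mathcal{D}_{[n,j]}$ according to a distinguished step and decomposing each path into independent pieces whose weights multiply. The one genuinely new ingredient compared with the scalar case is the bookkeeping of vertical translations: because the weight $a_{m-j}^{(j)}$ of a step depends on its terminal height, a sub-path living entirely at heights $\ge 1$ does not have the same weight as its downward projection to heights $\ge 0$. This discrepancy is exactly what the shifted polynomials $A_{[\cdot,\cdot]}^{(1)}$ record, which is why they appear in both \eqref{eq:A0Aks} and \eqref{eq:AjA0Ajm}. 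All identities are read off as coefficient relations in the field $\mathbb{C}((z^{-1}))$.

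First I would prove \eqref{eq:AjA0Ajm}. Comparing the coefficient of $z^{-(N+2)}$ on both sides, it is equivalent to the convolution identity
\[
A_{[N+1,j]}=\sum_{n=0}^{N}A_{[n,0]}\,A_{[N-n,j-1]}^{(1)},\qquad N\ge 0,\quad 1\le j\le p.
\]
To establish this, take $\gamma\in\mathcal{D}_{[N+1,j]}$ with $j\ge 1$ and locate the last vertex of $\gamma$ at height $0$, say at abscissa $n$. Since every vertex to the right of $(n,0)$ has height $\ge 1$ and $\min(\gamma)=0$, the step leaving $(n,0)$ must be the upstep to $(n+1,1)$ (a level step would revisit height $0$, a downstep would go below it). This splits $\gamma$ into an initial piece $\gamma_{1}$ on $[0,n]$, lying in $\mathcal{D}_{[n,0]}$; the upstep, of weight $1$; and a terminal piece $\gamma_{2}$ on $[n+1,N+1]$ that stays at heights $\ge 1$. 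Shifting $\gamma_{2}$ down one unit and translating horizontally identifies it with a path $\delta\in\mathcal{D}_{[N-n,j-1]}$, and by the height-dependence of the weights one has $w(\gamma_{2})=w(\delta+1)$. Summing $w(\gamma_{1})\,w(\gamma_{2})$ over all $\gamma$ and all admissible $n$ yields the convolution identity through the definitions \eqref{eq:def:Anj} and \eqref{eq:def:Anjshifted}.

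Next I would prove the auxiliary relation
\[
A_{[n+1,0]}=a_{0}^{(0)}A_{[n,0]}+\sum_{j=1}^{p}a_{0}^{(j)}A_{[n,j]},\qquad n\ge 0,
\]
by classifying a path $\gamma\in\mathcal{D}_{[n+1,0]}$ according to its \emph{last} step, which necessarily terminates at $(n+1,0)$. If that step is the level step $(n,0)\to(n+1,0)$ it carries weight $a_{0}^{(0)}$ and leaves a path in $\mathcal{D}_{[n,0]}$; if it is the downstep $(n,j)\to(n+1,0)$ for some $1\le j\le p$ it carries weight $a_{0}^{(j)}$ and leaves a path in $\mathcal{D}_{[n,j]}$; an upstep is impossible since it cannot end at height $0$. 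In generating-function form this reads $zA_{0}(z)-1=a_{0}^{(0)}A_{0}(z)+\sum_{j=1}^{p}a_{0}^{(j)}A_{j}(z)$, where the $-1$ comes from $A_{[0,0]}=1$.

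Finally I would combine the two. Substituting \eqref{eq:AjA0Ajm} in the form $A_{j}(z)=A_{0}(z)\,A_{j-1}^{(1)}(z)$ into the auxiliary relation gives
\[
zA_{0}(z)=1+A_{0}(z)\Bigl(a_{0}^{(0)}+\sum_{j=1}^{p}a_{0}^{(j)}A_{j-1}^{(1)}(z)\Bigr),
\]
and solving for $A_{0}(z)$ produces \eqref{eq:A0Aks}; this last step is purely algebraic and legitimate because $z-a_{0}^{(0)}-\sum_{j}a_{0}^{(j)}A_{j-1}^{(1)}(z)$ has leading term $z$ and is thus invertible in $\mathbb{C}((z^{-1}))$. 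I expect the main obstacle to be the careful justification of $w(\gamma_{2})=w(\delta+1)$ together with the verification that the map $\gamma\mapsto(\gamma_{1},n,\delta)$ is a genuine bijection, so that $\gamma_{1}$ and $\delta$ range \emph{independently} over $\mathcal{D}_{[n,0]}$ and $\mathcal{D}_{[N-n,j-1]}$; the height-dependent weights make it easy to mis-assign a step's weight to the wrong family, and it is precisely the shift convention in \eqref{eq:def:Anjshifted} that keeps this accounting correct.
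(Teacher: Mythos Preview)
Your proposal is correct and follows essentially the same approach as the paper: your ``last vertex at height $0$'' is equivalent to the paper's ``last upstep $(\kappa,0)\to(\kappa+1,1)$'', and your last-step decomposition of $\mathcal{D}_{[n+1,0]}$ is exactly the paper's proof of its auxiliary relation \eqref{eq:relAs}, after which both arguments combine the two identities algebraically to obtain \eqref{eq:A0Aks}. The only cosmetic difference is that you are slightly more explicit about the weight-shift bookkeeping $w(\gamma_{2})=w(\delta+1)$, which the paper absorbs into its identification of $\gamma_{3}$ with a path in $\mathcal{D}_{[n-k-1,j-1]}^{(1)}$.
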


The identities \eqref{eq:A0Aks} and \eqref{eq:AjA0Ajm} for $1\leq j\leq p-1$ establish a relation between the vectors $(A_{0}(z),\ldots,A_{p-1}(z))$ and $(A_{0}^{(1)}(z),\ldots,A_{p-1}^{(1)}(z))$ that can be expressed using \eqref{divoper} as
\[
(A_{0}(z),\ldots,A_{p-1}(z))=\frac{(1,\ldots,1)}{(0,\ldots,0,z-a_{0}^{(0)})+(A_{0}^{(1)}(z),\ldots,A_{p-2}^{(1)}(z),-\sum_{j=1}^{p}a_{0}^{(j)} A_{j-1}^{(1)}(z))}.
\]
This is the first step in the construction of the vector continued fraction for $(A_{0}(z),\ldots,A_{p-1}(z))$. One can then iterate the relations \eqref{relA0kAkpone}, of the same type as \eqref{eq:A0Aks}--\eqref{eq:AjA0Ajm}, to obtain the formal identity 
\begin{equation}\label{vcfforAs}
(A_{0}(z),\ldots,A_{p-1}(z))=\Kont_{j = 1}^{\infty}\Bigl(\frac{\mathbf{c}_{j}}{\mathbf{d}_{j}(z)}\Bigr).
\end{equation}
We explain the process of deriving this continued fraction in Section~\ref{sec:resolvcf}. The process is not straightforward since the coefficients $\mathbf{c}_{k}$ and $\mathbf{d}_{k}(z)$ change their form at some point along the way in the expansion, see \eqref{cks} and \eqref{dks}.

In this paper we also prove the relations
\begin{align}
zA_{0}(z)-1 & =\sum_{j=0}^{p} a_{0}^{(j)} A_{j}(z),\label{eq:relAs}\\
A_{j}(z) & =A_{i}(z)\,A_{j-i-1}^{(i+1)}(z),\qquad 0\leq i<j\leq p,\label{eq:A0Ai:2}
\end{align}
which are used in the proof of Theorem~\ref{theo:relseriesAs}. Note that \eqref{eq:A0Ai:2} generalizes \eqref{eq:AjA0Ajm}.

In the recent works of P\'{e}tr\'{e}olle--Sokal \cite{PetSok} and P\'{e}tr\'{e}olle--Sokal--Zhu \cite{PetSokZhu}, Lukasiewicz paths have also been connected with branched continued fractions. These are scalar continued fractions that are obtained iterating the relation
\[
A_{0}^{(k)}(z)=\frac{1}{z-a_{k}^{(0)}-\sum_{j=1}^{p}a_{k}^{(j)}\prod_{\ell=1}^{j}A_{0}^{(k+\ell)}(z)},\qquad k\geq 0.
\]
Note that this relation can be deduced from \eqref{relA0kAkpone}. The work \cite{PetSokZhu} also contains an extensive study of combinatorial and total-positivity properties of the polynomials $A_{[n,j]}$ and other weight polynomials associated with lattice paths in the upper half-plane. 

Our second main result is the following. 

\begin{theorem}\label{theo:relseriesWs}
The following relations hold between the series defined in \eqref{eq:def:seriesWj}--\eqref{eq:def:seriesBjq}:
\begin{align}
W_{0}(z) &=\frac{1}{z-a_{0}^{(0)}-\sum_{j=1}^{p}\sum_{k=0}^{j}a_{-k}^{(j)}\,A_{j-k-1}^{(1)}(z)\,B_{k-1}^{(1)}(z)} \label{eq:W0expansion}\\
W_{j}(z) & =W_{0}(z)\,A_{j-1}^{(1)}(z)\qquad 1\leq j\leq p.\label{eq:WjW0Aj}
\end{align}
In \eqref{eq:W0expansion}, we understand that $A_{-1}^{(1)}(z)\equiv B_{-1}^{(1)}(z)\equiv 1$. More generally,
\begin{align}
W_{j}(z) & =W_{i}(z)\,A_{j-i-1}^{(i+1)}(z) \qquad 0\leq i<j\leq p.\label{eq:WjWiAj}
\end{align}
\end{theorem}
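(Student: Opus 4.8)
The plan is to establish the three identities by decomposing the paths in $\mathcal{P}_{[n,j]}$, in the spirit of Flajolet's scalar argument reproduced in the introduction and of the proof of Theorem~\ref{theo:relseriesAs}. I would prove \eqref{eq:WjWiAj} first (it contains \eqref{eq:WjW0Aj} as the special case $i=0$), and then establish \eqref{eq:W0expansion} directly by a two-sided first-return decomposition. Note that a naive ``last step'' decomposition of a bridge, as used for \eqref{eq:relAs}, fails here: since the paths are unrestricted, a bridge may return to the axis from below via an upstep out of height $-1$, so the relevant preceding piece is not one of the $W_{[n,m]}$ with $0\le m\le p$. This is precisely why both upper excursions ($A$) and lower excursions ($B$) must enter \eqref{eq:W0expansion}.

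For \eqref{eq:WjWiAj}, fix $0\leq i<j\leq p$ and take $\gamma\in\mathcal{P}_{[n,j]}$. Since $\gamma$ starts at height $0\leq i$ and ends at height $j\geq i+1$, and since only upsteps increase the height (by exactly $1$), $\gamma$ contains at least one upstep from height $i$ to height $i+1$; let $u$ be the \emph{last} such upstep and write $\gamma=\gamma'\,u\,\gamma''$. The piece $\gamma'$ runs from $(0,0)$ to the initial vertex of $u$ at height $i$ and is an arbitrary element of $\mathcal{P}_{[m_1,i]}$, while $\gamma''$ runs from the terminal vertex of $u$ at height $i+1$ to $(n,j)$; because $u$ is the last $i\to i+1$ upstep, $\gamma''$ never returns to height $\leq i$, so it is a horizontal-and-vertical translate of a path in $\mathcal{D}_{[m_2,j-i-1]}^{(i+1)}$, with $m_1+1+m_2=n$. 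Conversely, any such triple concatenates to a path in $\mathcal{P}_{[n,j]}$ for which $u$ is again the last $i\to i+1$ upstep (as $\gamma''$ avoids height $\leq i$ entirely), so this is a bijection. Multiplicativity and horizontal-translation invariance of the weights give $W_{[n,j]}=\sum_{m_1+m_2=n-1}W_{[m_1,i]}\,A_{[m_2,j-i-1]}^{(i+1)}$, the coefficient identity equivalent to $W_j(z)=W_i(z)\,A_{j-i-1}^{(i+1)}(z)$.

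For \eqref{eq:W0expansion}, I would decompose each bridge $\gamma\in\mathcal{P}_{[n+1,0]}$ ($n\geq 0$) at its first return to the $x$-axis, i.e.\ the first vertex of height $0$ after the origin; the block $\beta$ up to that vertex is followed by an arbitrary shorter bridge contributing $W_0$. If the first step is a level step at height $0$, then $\beta$ is that single step (weight $a_0^{(0)}$). Otherwise $\gamma$ leaves the axis and $\beta$ is organized around its unique step that first reaches height $\leq 0$: a downstep of some drop $j\in\{1,\dots,p\}$ from height $j-k$ down to height $-k$ (weight $a_{-k}^{(j)}$), with $0\leq k\leq j$. When $k<j$ the part of $\beta$ preceding this downstep is an initial upstep followed by a path staying at heights $\geq 1$ and reaching height $j-k$ (a translate of a path in $\mathcal{D}_{[m_1,j-k-1]}^{(1)}$, the condition $\min=1$ being forced since the portion starts on $y=1$); when $k>0$ the part following it is a path staying at heights $\leq -1$ and ending at height $-1$ (a translate of a path in $\widehat{\mathcal{D}}_{[m_2,k-1]}^{(1)}$) together with a closing upstep returning to the axis. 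The boundary cases $k=0$ (return from above, no lower part) and $k=j$ (the first step itself is a downstep from the origin, no upper part) are accommodated exactly by the conventions $A_{-1}^{(1)}\equiv B_{-1}^{(1)}\equiv 1$. Summing block weights over all types produces the denominator $D(z)=a_0^{(0)}+\sum_{j=1}^p\sum_{k=0}^j a_{-k}^{(j)}A_{j-k-1}^{(1)}(z)B_{k-1}^{(1)}(z)$, and tracking lengths shows the decomposition is equivalent to $zW_0(z)-1=D(z)\,W_0(z)$, which is \eqref{eq:W0expansion}.

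The hard part will be the bookkeeping in this last, two-sided decomposition. One must check that the distinguished axis-crossing downstep is well defined, i.e.\ that $\beta$ stays strictly above the axis before it and strictly below after it (so the first return occurs exactly at the end of $\beta$, and the distinguished downstep is genuinely the first step reaching height $\leq 0$); that the upper and lower portions correspond to shifted meanders in $\mathcal{D}^{(1)}$ and $\widehat{\mathcal{D}}^{(1)}$, where starting or ending on the line $y=\pm 1$ forces the extremal-height conditions defining those classes; and that the lengths contributed by the optional initial and closing upsteps match the $z$-shifts built into the series, so that the conventions $A_{-1}^{(1)}\equiv B_{-1}^{(1)}\equiv 1$ yield the correct powers of $z$. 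Verifying these points in the generic range $1\leq k\leq j-1$ and in the two boundary cases $k\in\{0,j\}$ separately gives the coefficient-wise identity, which together with \eqref{eq:WjWiAj} completes the proof.
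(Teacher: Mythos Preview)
Your proposal is correct and takes essentially the same approach as the paper: the proof of \eqref{eq:WjWiAj} via the last $i\to i+1$ upstep is identical, and for \eqref{eq:W0expansion} the paper likewise decomposes a bridge around the first downstep that touches or crosses the axis together with the first return to height $0$, obtaining the same factors $A^{(1)}$, $B^{(1)}$, $W_0$. The only difference is presentational: the paper partitions first by the type of the initial step (upstep, level step, or downstep by $j$) and handles the touching and crossing subcases separately before a change of variables, whereas you parametrize the block directly by the pair $(j,k)$ of the distinguished downstep and absorb the boundary cases $k\in\{0,j\}$ into the conventions $A_{-1}^{(1)}\equiv B_{-1}^{(1)}\equiv 1$.
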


It turns out that the power series $W_{j}(z)$ are resolvent functions of a two-sided banded Hessenberg operator, with matrix representation obtained by extending the diagonal sequences in \eqref{Hmatrix} from $\mathbb{Z}_{\geq 0}$ to $\mathbb{Z}$, see Proposition~\ref{prop:eqresollat}. So the relations in Theorem 1.4 are relations between resolvents of two-sided and one-sided operators. These relations played a crucial role in our study in \cite{LopPro,LopPro2} of characteristic polynomials of random banded Hessenberg matrices with i.i.d. entries along diagonals. 

In Section~\ref{sec:proofmain} we prove Theorems~\ref{theo:relseriesAs} and \ref{theo:relseriesWs}, using only notions from the theory of lattice paths. In Section~\ref{sec:STDyck} we focus on certain subcollections of the families $\mathcal{P}_{[n,j]}$, $\mathcal{D}_{[n,j]}$, and $\widehat{\mathcal{D}}_{[n,j]}$. The paths that we consider in that section are only allowed to have two types of steps:
\begin{equation}\label{SRedges}
\begin{aligned}
& \mbox{upsteps}\,\,\,(n,m)\rightarrow(n+1,m+1),\\
& \mbox{downsteps}\,\,\,(n,m)\rightarrow(n+1,m-p)\,\,\,\mbox{by $p$ units}.
\end{aligned}
\end{equation}
These steps are given the same weights specified in \eqref{weightedges}. Since $(a_{n}^{(p)})_{n\in\mathbb{Z}}$ is the only sequence of weights that will be relevant in this case, in that section we simplify notation and write 
\begin{equation}\label{eq:ananp}
a_{n}=a_{n}^{(p)},\qquad n\in\mathbb{Z}.
\end{equation}

For each $0\leq j\leq p$ and $n\in\mathbb{Z}_{\geq 0}$, let $\mathcal{R}_{[n,j]}$ denote the collection of all lattice paths of length $n$, with initial point $(0,0)$, final point $(n,j)$, and steps as indicated in \eqref{SRedges}. Hence $\mathcal{R}_{[n,j]}$ is a subcollection of $\mathcal{P}_{[n,j]}$. The difference in height between the initial and final points of a path $\gamma\in\mathcal{R}_{[n,j]}$ implies
\begin{equation}\label{relcountupdown}
\mbox{card}\,\{\mbox{upsteps in}\,\gamma\}=p\times \mbox{card}\,\{\mbox{downsteps in}\,\gamma\}+j.
\end{equation}
From this it is easy to deduce that 
\[
\mathcal{R}_{[n,j]}\neq \emptyset\,\,\Longleftrightarrow\,\, n\equiv j\,\,\mbox{mod}\,\,(p+1).
\]  
For $0\leq j\leq p$ and $n\in\mathbb{Z}_{\geq 0}$ we also define $\mathcal{S}_{[n,j]}:=\{\gamma\in\mathcal{R}_{[n,j]}: \min(\gamma)=0\}$. The paths in $\mathcal{S}_{[n,j]}$ are known as \emph{partial $p$-Dyck paths}. Finally, let $\widehat{\mathcal{S}}_{[n,j]}$ denote the collection of all paths $\gamma$ of length $n$, with initial point $(0,-j)$, terminal point $(n,0)$, steps as indicated in \eqref{SRedges}, and satisfying $\max(\gamma)=0$. The collections $\mathcal{S}_{[n,j]}$ and $\widehat{\mathcal{S}}_{[n,j]}$ are also non-empty if and only if $n-j$ is a multiple of $p+1$. In Figs.~\ref{genDyckpath}--\ref{reflectedDyckpath} we illustrate some paths in the collections we have defined.

We introduce now the associated families of weight polynomials. Let
\begin{align}
R_{[n,j]} & :=\sum_{\gamma\in\mathcal{R}_{[n,j]}}w(\gamma),\label{def:wpolyRnj}\\
S_{[n,j]} & :=\sum_{\gamma\in\mathcal{S}_{[n,j]}}w(\gamma),\label{def:wpolySnj}\\
T_{[n,j]} & :=\sum_{\gamma\in\widehat{\mathcal{S}}_{[n,j]}}w(\gamma).\label{def:wpolyTnj}
\end{align}
In particular, we have 
\[
R_{[n,j]}=S_{[n,j]}=T_{[n,j]}=0\quad  \mbox{if}\,\, n\not\equiv j\,\,\mbox{mod}\,\,(p+1).
\]
The polynomials $S_{[n,j]}$ are referred in \cite{PetSokZhu} as \emph{generalized $p$-Stieltjes--Rogers polynomials}. 

Our interest in the analysis of the subcollections $\mathcal{R}_{[n,j]}$, $\mathcal{S}_{[n,j]}$, $\widehat{\mathcal{S}}_{[n,j]}$ comes from the work of Aptekarev--Kaliaguine--Van Iseghem \cite{AptKalVan}, where they study spectral properties of one-sided banded Hessenberg operators $H$ in the bi-diagonal case, i.e., taking $a_{n}^{(k)}=0$, $0\leq k\leq p-1$ and $a_{n}^{(p)}=a_{n}$ in \eqref{Hmatrix}. In that work, formula \eqref{eq:descSnj} below was obtained for the moments $\langle H^{n}e_{j},e_{0}\rangle$, $n=m(p+1)+j$, which are called \emph{genetic sums}. Under the assumption that the sequence $(a_{n})_{n=0}^{\infty}$ is bounded and its terms are positive, it was also proved in \cite{AptKalVan} that the Hermite-Pad\'{e} approximants \eqref{vectHPapprox} converge to the vector of resolvent functions $(\phi_{0}(z),\ldots,\phi_{p-1}(z))=(S_{0}(z),\ldots,S_{p-1}(z))$ for $z$ in the complement of the starlike set $\{x\in\mathbb{C}: x^{p+1}\geq 0\}$ in the complex plane, where
\[
S_{j}(z)=\sum_{m=0}^{\infty}\frac{S_{[m(p+1)+j,j]}}{z^{m(p+1)+j+1}},\qquad 0\leq j\leq p-1.
\]

In Section~\ref{sec:STDyck} our main result is the following theorem, which we prove again using only the theory of lattice paths.

\begin{theorem}\label{theo:weightpoly}
For every $m\in\mathbb{Z}_{\geq 0}$ and $0\leq j\leq p$ the following identities hold:
\begin{align}
R_{[m(p+1)+j,j]} & =\sum_{i_{1}=-p}^{(m-1)p+j}\,\sum_{i_{2}=i_{1}-p}^{(m-2)p+j}\,\sum_{i_{3}=i_{2}-p}^{(m-3)p+j}\,\cdots\,\sum_{i_{m}=i_{m-1}-p}^{j}\,\prod_{k=1}^{m}a_{i_{k}},\label{eq:descRnj}\\
S_{[m(p+1)+j,j]} & =\sum_{i_{1}=0}^{j}\,\,\sum_{i_{2}=0}^{i_{1}+p}\,\,\sum_{i_{3}=0}^{i_{2}+p}\,\,\cdots\,\,\sum_{i_{m}=0}^{i_{m-1}+p}\,\,\prod_{k=1}^{m}a_{i_{k}},\label{eq:descSnj}\\
T_{[m(p+1)+j,j]} & =\sum_{i_{1}=-j-p}^{-p}\,\,\sum_{i_{2}=i_{1}-p}^{-p}\,\,\sum_{i_{3}=i_{2}-p}^{-p}\,\,\cdots\,\,\sum_{i_{m}=i_{m-1}-p}^{-p}\,\,\prod_{k=1}^{m}a_{i_{k}}.\label{eq:descTnj}
\end{align} 
These expressions are understood to be $1$ if $m=0$.
\end{theorem}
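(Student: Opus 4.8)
The plan is to prove all three identities simultaneously through a single bijective principle. The key observation is that, by the weight rule \eqref{weightedges} together with the simplification \eqref{eq:ananp}, every upstep carries weight $1$ while a downstep terminating at height $\ell$ carries weight $a_{\ell}$; hence for any path $\gamma$ built from the steps \eqref{SRedges} one has $w(\gamma)=\prod_{k}a_{i_k}$, where $i_1,\dots,i_d$ are the terminal heights of its downsteps and $d$ is their number. Moreover, for a path of length $n=m(p+1)+j$ running from height $0$ (resp.\ $-j$) to height $j$ (resp.\ $0$), the counting relation \eqref{relcountupdown} forces $d=m$. So I would first record that each of the three weight polynomials $R_{[n,j]}$, $S_{[n,j]}$, $T_{[n,j]}$ is a sum over tuples $(i_1,\dots,i_m)$ of products $\prod_{k}a_{i_k}$, the tuple being the ordered list of downstep terminal heights of a path in the relevant collection.

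The heart of the argument is to show that the assignment $\gamma\mapsto(i_1,\dots,i_m)$ is a bijection between the collection in question and the set of integer tuples described by the nested sums, which I would make explicit by reconstructing a path from a prescribed tuple. Since between two consecutive downsteps (and before the first, and after the last) the path consists solely of upsteps, a tuple determines the number of upsteps in each of these gaps: before the $k$-th downstep the path must climb from the previous terminal height to $i_k+p$ (the source of the $k$-th downstep), and after the last downstep it must climb to the prescribed final height. Thus, reading left to right, the gap lengths are $u_0=i_1-(\text{start height})+p$, then $u_k=i_{k+1}-i_k+p$, and a terminal gap to the endpoint; the path exists and is unique precisely when every gap length is nonnegative, and a telescoping computation shows the total number of upsteps is automatically $pm+j$, so no global length constraint is lost. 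This reduces each identity to matching a system of linear inequalities on $(i_1,\dots,i_m)$ with the ranges of a nested sum.

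It then remains to translate the one-sided height restrictions and read off the correct nesting direction for each family. For $\mathcal{R}_{[n,j]}$ there is no height constraint, so reading downsteps left to right the only conditions are $i_1\geq -p$, $i_k\geq i_{k-1}-p$ and (from the terminal gap) $i_m\leq j$; I would then observe that these force $i_k\leq (m-k)p+j$ for every $k$, so inserting those as explicit upper limits is redundant and yields exactly \eqref{eq:descRnj}. For $\mathcal{S}_{[n,j]}$ the condition $\min(\gamma)=0$ is equivalent to all downstep terminal heights being $\geq 0$ (the local minima of the profile occur precisely at those points, and the start lies at height $0$); reading the downsteps \emph{right to left} then gives $0\leq i_1\leq j$ and $0\leq i_k\leq i_{k-1}+p$, which is \eqref{eq:descSnj}. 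For $\widehat{\mathcal{S}}_{[n,j]}$ the condition $\max(\gamma)=0$ is equivalent to all downstep \emph{sources} being $\leq 0$, i.e.\ all terminal heights $\leq -p$; reading left to right from the start at $(0,-j)$ gives $-j-p\leq i_1\leq -p$ and $i_{k-1}-p\leq i_k\leq -p$, which is \eqref{eq:descTnj}. In each family the case $m=0$ is the unique path of pure upsteps, of weight $1$, matching the empty product.

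I expect the main obstacle to be the bookkeeping in the $\mathcal{R}$ case: one must verify that the explicit, seemingly independent upper limits $(m-k)p+j$ in \eqref{eq:descRnj} neither exclude any admissible path nor admit an inadmissible tuple. This is exactly where the telescoping identity for the total upstep count and the implication ``$i_m\leq j$ together with $i_k\geq i_{k-1}-p$ forces $i_k\leq (m-k)p+j$'' must be invoked carefully, so that the nested sum ranges over precisely the valid tuples. The analogous verifications for $\mathcal{S}_{[n,j]}$ and $\widehat{\mathcal{S}}_{[n,j]}$ are simpler, since their nested ranges are built directly from the genuinely binding one-sided bounds.
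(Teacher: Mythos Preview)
Your approach is essentially the same as the paper's: both establish a bijection between the paths and the integer tuples cut out by the nested sum ranges, by recording the terminal heights of the downsteps (in a suitable left-to-right or right-to-left order) and reconstructing the intervening upstep runs. Your treatment of $\mathcal{R}_{[n,j]}$ and $\mathcal{S}_{[n,j]}$ matches the paper's almost exactly, including the choice of reading direction (left-to-right for $R$, right-to-left for $S$); your explicit remark that the upper limits $(m-k)p+j$ in \eqref{eq:descRnj} are forced by $i_m\le j$ together with $i_k\ge i_{k-1}-p$ is a point the paper leaves implicit but is indeed the content of showing the map lands in $\widetilde{\mathcal{I}}_{[m,j]}$.

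The one genuine difference is for $T_{[n,j]}$: the paper derives \eqref{eq:descTnj} by applying the reflection bijection $\gamma\mapsto\widehat{\gamma}$ of Proposition~\ref{prop:elemprop}.$iv)$ to \eqref{eq:descSnj} (so $a_k\mapsto a_{-p-k}$), whereas you argue directly by reading the downsteps of a path in $\widehat{\mathcal{S}}_{[n,j]}$ left to right and translating $\max(\gamma)=0$ into the source-height bound $i_k+p\le 0$. Both routes are short and correct; the paper's is slightly more economical once the reflection is in hand, while yours is self-contained and makes the parallel with the $R$ case transparent.
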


In Section~\ref{sec:resolvcf} we first show that the formal power series defined in \eqref{eq:def:seriesWj}--\eqref{eq:def:seriesBjq} can be identified as resolvent functions of certain banded Hessenberg operators, and in particular we prove that $(A_{0}(z),\ldots,A_{p-1}(z))=(\phi_{0}(z),\ldots,\phi_{p-1}(z))$. In the second part of that section we justify \eqref{vcfforAs} using the relations \eqref{eq:A0Aks}--\eqref{eq:AjA0Ajm} and \eqref{relA0kAkpone}. We also describe two different vector continued fractions for the vector $(S_{0}(z),\ldots,S_{p-1}(z))$ in the bi-diagonal case.

\section{Proofs of Theorems~\ref{theo:relseriesAs} and \ref{theo:relseriesWs}}\label{sec:proofmain}

\noindent\textbf{Proof of Theorem~\ref{theo:relseriesAs} and formulas \eqref{eq:relAs}--\eqref{eq:A0Ai:2}:} First we prove \eqref{eq:AjA0Ajm}, so let $1\leq j\leq p$ be fixed. It is clear that if $0\leq n\leq j-1$, then $\mathcal{D}_{[n,j]}=\emptyset$ and $A_{[n,j]}=0$. Let $n\geq j$ and $\gamma$ be a path in $\mathcal{D}_{[n,j]}$. We can find in $\gamma$ a unique upstep with the following property: it is the last upstep of the form $(\kappa,0)\rightarrow (\kappa+1,1)$ (i.e. with initial height $0$ and final height $1$) as we traverse the path from left to right. This is obvious since the path starts at height $0$ and ends at height $j\geq 1$. Let us denote the abscissa of the initial point of this special upstep with the symbol $\kappa_{0}(\gamma)$.

For $n\geq j$, we can partition the collection $\mathcal{D}_{[n,j]}$ as follows:
\begin{equation}\label{eq:partDnj}
\mathcal{D}_{[n,j]}=\bigcup_{k=0}^{n-j}\mathcal{D}_{[n,j,k]},
\end{equation}
where 
\begin{equation}\label{def:Dnjk}
\mathcal{D}_{[n,j,k]}:=\{\gamma\in\mathcal{D}_{[n,j]}: \kappa_{0}(\gamma)=k\},\qquad 0\leq k\leq n-j.
\end{equation}
It is very easy to see that the collections $\mathcal{D}_{[n,j,k]}$, $0\leq k\leq n-j$ are disjoint.

Let $\gamma\in\mathcal{D}_{[n,j,k]}$. We subdivide $\gamma$ into three parts $\gamma_{1}$, $\gamma_{2}$, $\gamma_{3}$, which are the following: $\gamma_{1}$ is the portion of $\gamma$ on the interval $0\leq x\leq k$, $\gamma_{2}$ is the single special upstep $(k,0)\rightarrow(k+1,1)$, and $\gamma_{3}$ is the portion of $\gamma$ on the interval $k+1\leq x\leq n$. The first part $\gamma_{1}$ is clearly identifiable with a path in $\mathcal{D}_{[k,0]}$, because $\gamma_{1}$ has length $k$, and it starts and ends on the real axis. It follows from the definition of $\kappa_{0}(\gamma)=k$ that $\gamma_{3}$ has no point below the horizontal line $y=1$, and therefore it can be identified with a horizontal translation of a path in $\mathcal{D}^{(1)}_{[n-k-1,j-1]}$. Since $w(\gamma)=w(\gamma_{1}) w(\gamma_{2}) w(\gamma_{3})=w(\gamma_{1}) w(\gamma_{3})$, these identifications allow us to conclude that
\begin{equation}\label{eq:weightDnjk}
\sum_{\gamma\in\mathcal{D}_{[n,j,k]}}w(\gamma)=A_{[k,0]}\,A_{[n-k-1,j-1]}^{(1)}.
\end{equation}
Applying now \eqref{eq:def:Anj}, \eqref{eq:partDnj}, and \eqref{eq:weightDnjk}, we obtain
\[
A_{[n,j]}=\sum_{\gamma\in\mathcal{D}_{[n,j]}}w(\gamma)=\sum_{k=0}^{n-j}\sum_{\gamma\in\mathcal{D}_{[n,j,k]}}w(\gamma)=\sum_{k=0}^{n-j} A_{[k,0]}\,A_{[n-k-1,j-1]}^{(1)},\qquad n\geq j.
\]
In virtue of \eqref{def:extABW}, \eqref{def:extABWq}, and the fact that $A_{[l,j-1]}^{(1)}=0$ for $0\leq l\leq j-2$, we can rewrite the above identity as
\[
A_{[n,j]}=\sum_{k\in\mathbb{Z}} A_{[k,0]} A_{[n-k-1,j-1]}^{(1)},\qquad n\in\mathbb{Z}.
\]
Hence, from \eqref{eq:def:seriesAj} and \eqref{eq:def:seriesAjq} we deduce 
\begin{equation}\label{rel3As}
A_{j}(z)=\sum_{n\in\mathbb{Z}}\frac{A_{[n,j]}}{z^{n+1}}=\sum_{n\in\mathbb{Z}}\frac{1}{z^{n+1}}\sum_{k\in\mathbb{Z}}A_{[k,0]} A_{[n-k-1,j-1]}^{(1)}=A_{0}(z) A_{j-1}^{(1)}(z).
\end{equation}

\begin{figure}
\begin{center}
\begin{tikzpicture}[scale=0.7]
\draw[line width=1.5pt]  (-3.03,0) -- (15.5,0);
\draw[line width=1.5pt]  (-3,0) -- (-3,4.5);
\draw[line width=0.7pt] (-3,0) -- (-2,1) -- (-1,1) -- (0,2) -- (1,1) -- (2,2) -- (3,3) -- (4,3) -- (5,4) -- (6,1) -- (7,2) -- (8,3) -- (9,2) -- (10,2) -- (11,0) -- (12,1) -- (13,1) -- (14,2) -- (15,3);
\draw [line width=0.5] (-2,0) -- (-2,-0.12);
\draw [line width=0.5] (-1,0) -- (-1,-0.12);
\draw [line width=0.5] (0,0) -- (0,-0.12);
\draw [line width=0.5] (1,0) -- (1,-0.12);
\draw [line width=0.5] (2,0) -- (2,-0.12);
\draw [line width=0.5] (3,0) -- (3,-0.12);
\draw [line width=0.5] (4,0) -- (4,-0.12);
\draw [line width=0.5] (5,0) -- (5,-0.12);
\draw [line width=0.5] (6,0) -- (6,-0.12);
\draw [line width=0.5] (7,0) -- (7,-0.12);
\draw [line width=0.5] (8,0) -- (8,-0.12);
\draw [line width=0.5] (9,0) -- (9,-0.12);
\draw [line width=0.5] (10,0) -- (10,-0.12);
\draw [line width=0.5] (11,0) -- (11,-0.12);
\draw [line width=0.5] (12,0) -- (12,-0.12);
\draw [line width=0.5] (13,0) -- (13,-0.12);
\draw [line width=0.5] (14,0) -- (14,-0.12);
\draw [line width=0.5] (15,0) -- (15,-0.12);
\draw [line width=0.5] (-3,1) -- (-3.12,1);
\draw [line width=0.5] (-3,2) -- (-3.12,2);
\draw [line width=0.5] (-3,3) -- (-3.12,3);
\draw [line width=0.5] (-3,4) -- (-3.12,4);
\draw [dotted] (-3,1) -- (15.5,1);
\draw [dotted] (-3,2) -- (15.5,2);
\draw [dotted] (-3,3) -- (15.5,3);
\draw [dotted] (-3,4) -- (15.5,4);
\draw [dotted] (-2,0) -- (-2,4.5);
\draw [dotted] (-1,0) -- (-1,4.5);
\draw [dotted] (0,0) -- (0,4.5);
\draw [dotted] (1,0) -- (1,4.5);
\draw [dotted] (2,0) -- (2,4.5);
\draw [dotted] (3,0) -- (3,4.5);
\draw [dotted] (4,0) -- (4,4.5);
\draw [dotted] (5,0) -- (5,4.5);
\draw [dotted] (6,0) -- (6,4.5);
\draw [dotted] (7,0) -- (7,4.5);
\draw [dotted] (8,0) -- (8,4.5);
\draw [dotted] (9,0) -- (9,4.5);
\draw [dotted] (10,0) -- (10,4.5);
\draw [dotted] (11,0) -- (11,4.5);
\draw [dotted] (12,0) -- (12,4.5);
\draw [dotted] (13,0) -- (13,4.5);
\draw [dotted] (14,0) -- (14,4.5);
\draw [dotted] (15,0) -- (15,4.5);
\draw (-2,-0.1) node[below, scale=0.8]{$1$};
\draw (-1,-0.1) node[below, scale=0.8]{$2$};
\draw (0,-0.1) node[below, scale=0.8]{$3$};
\draw (1,-0.1) node[below, scale=0.8]{$4$};
\draw (2,-0.1) node[below, scale=0.8]{$5$};
\draw (3,-0.1) node[below, scale=0.8]{$6$};
\draw (4,-0.1) node[below, scale=0.8]{$7$};
\draw (5,-0.1) node[below, scale=0.8]{$8$};
\draw (6,-0.1) node[below, scale=0.8]{$9$};
\draw (7,-0.1) node[below, scale=0.8]{$10$};
\draw (8,-0.1) node[below, scale=0.8]{$11$};
\draw (9,-0.1) node[below, scale=0.8]{$12$};
\draw (10,-0.1) node[below, scale=0.8]{$13$};
\draw (11,-0.1) node[below, scale=0.8]{$14$};
\draw (12,-0.1) node[below, scale=0.8]{$15$};
\draw (13,-0.1) node[below, scale=0.8]{$16$};
\draw (14,-0.1) node[below, scale=0.8]{$17$};
\draw (15,-0.1) node[below, scale=0.8]{$18$};
\draw (-3.1,1) node[left, scale=0.8]{$1$};
\draw (-3.1,2) node[left, scale=0.8]{$2$};
\draw (-3.1,3) node[left, scale=0.8]{$3$};
\draw (-3.1,4) node[left, scale=0.8]{$4$};
\end{tikzpicture}
\end{center}
\caption{Example, in the case $p=3$, of a path in the collection $\mathcal{D}_{[18,3]}$.}
\label{Lukapath}
\end{figure}
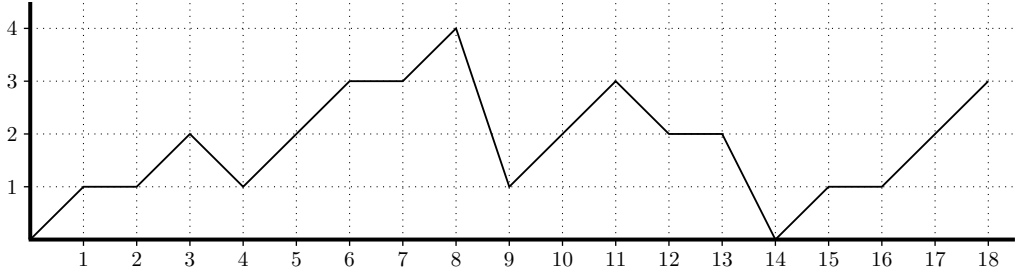

Now we justify \eqref{eq:A0Ai:2}, so fix $0\leq i<j\leq p$. Let $\gamma\in\mathcal{D}_{[n,j]}$, $n\geq j$. There exists a unique upstep in $\gamma$ with the property that it is the last upstep of the form $(\kappa,i)\rightarrow(\kappa+1,i+1)$ (i.e., with initial height $i$ and final height $i+1$) as we traverse the path from left to right. This is clear since $\gamma$ starts at height $0$, it ends at height $j$, and $j>i$. Let $\kappa_{i}(\gamma)$ denote the abscissa of the initial point of this unique upstep in $\gamma$. A simple counting shows that the possible range of the values of $\kappa_{i}(\gamma)$ is $i\leq \kappa_{i}(\gamma)\leq n-j+i$. So we partition the collection $\mathcal{D}_{[n,j]}$ as follows:
\begin{equation}\label{eq:partDnjDtilde}
\mathcal{D}_{[n,j]}=\bigcup_{k=i}^{n-j+i}\widetilde{\mathcal{D}}_{[n,j,k]},
\end{equation} 
where
\[
\widetilde{\mathcal{D}}_{[n,j,k]}=\{\gamma\in\mathcal{D}_{[n,j]}: \kappa_{i}(\gamma)=k\},\qquad i\leq k\leq n-j+i.
\]
Let $\gamma\in\widetilde{\mathcal{D}}_{[n,j,k]}$. We subdivide $\gamma$ into three parts $\gamma_{1}$, $\gamma_{2}$, $\gamma_{3}$, which are the following: $\gamma_{1}$ is the portion of $\gamma$ on the interval $0\leq x\leq k$, which is clearly identifiable with a path in $\mathcal{D}_{[k,i]}$; $\gamma_{2}$ is the special upstep $(k,i)\rightarrow(k+1,i+1)$; and $\gamma_{3}$ is the portion of $\gamma$ on the interval $k+1\leq x\leq n$, which (in virtue of the definition of $\kappa_{i}(\gamma)$) has no point below the horizontal line $y=i+1$, and therefore it can be identified with a horizontal translation of a path in $\mathcal{D}_{[n-k-1,j-i-1]}^{(i+1)}$. 

These identifications imply the relation
\[
\sum_{\gamma\in\widetilde{\mathcal{D}}_{[n,j,k]}}w(\gamma)=A_{[k,i]} A_{[n-k-1,j-i-1]}^{(i+1)}.
\]
This relation and \eqref{eq:partDnjDtilde} lead to the identity    
\[
A_{[n,j]}=\sum_{k=i}^{n-j+i}\sum_{\gamma\in\widetilde{\mathcal{D}}_{[n,j,k]}}w(\gamma)=\sum_{k=i}^{n-j+i}A_{[k,i]} A_{[n-k-1,j-i-1]}^{(i+1)},\qquad n\geq j,
\]
which is equivalent to 
\[
A_{[n,j]}=\sum_{k\in\mathbb{Z}}A_{[k,i]} A_{[n-k-1,j-i-1]}^{(i+1)},\qquad n\in\mathbb{Z}.
\]
As in \eqref{rel3As}, this implies \eqref{eq:A0Ai:2}.

Now we justify \eqref{eq:relAs}. Suppose $n\geq 1$. If $\gamma\in\mathcal{D}_{[n,0]}$, then the last step of $\gamma$ is either the level step $(n-1,0)\rightarrow(n,0)$, or a downstep $(n-1,j)\rightarrow(n,0)$ for some $1\leq j\leq p$. We partition the collection of paths $\mathcal{D}_{[n,0]}$ according to the last step of a path $\gamma\in\mathcal{D}_{[n,0]}$. We have
\begin{equation}\label{eq:partDn0}
\mathcal{D}_{[n,0]}=\bigcup_{j=0}^{p}\mathcal{L}_{[n,j]}
\end{equation}
where
\[
\mathcal{L}_{[n,j]}:=\{\gamma\in\mathcal{D}_{[n,0]}: \mbox{the last step of}\,\,\gamma\,\,\mbox{is}\,\,(n-1,j)\rightarrow(n,0)\},\qquad 0\leq j\leq p.
\]
The collections $\mathcal{L}_{[n,j]}$ are obviously disjoint (some of them may be empty if $n\leq p$). If $\gamma\in\mathcal{L}_{[n,j]}$, the portion of $\gamma$ on the interval $0\leq x\leq n-1$ can be any path in $\mathcal{D}_{[n-1,j]}$, hence
\begin{equation}\label{eq:weightLnj}
\sum_{\gamma\in\mathcal{L}_{[n,j]}}w(\gamma)=a_{0}^{(j)} A_{[n-1,j]},\qquad 0\leq j\leq p.
\end{equation}
It follows from \eqref{eq:partDn0} and \eqref{eq:weightLnj} that 
\[
A_{[n,0]}=\sum_{\gamma\in\mathcal{D}_{[n,0]}} w(\gamma)=\sum_{j=0}^{p}\sum_{\gamma\in\mathcal{L}_{[n,j]}}w(\gamma)=\sum_{j=0}^{p} a_{0}^{(j)}A_{[n-1,j]},\qquad n\geq 1.
\]
This is equivalent to \eqref{eq:relAs}.

From \eqref{eq:relAs} and \eqref{eq:AjA0Ajm} we obtain
\begin{align*}
z A_{0}(z)-1 & = a_{0}^{(0)} A_{0}(z)+\sum_{j=1}^{p} a_{0}^{(j)} A_{j}(z)=a_{0}^{(0)} A_{0}(z)+\sum_{j=1}^{p} a_{0}^{(j)} A_{0}(z) A_{j-1}^{(1)}(z)\\
 & =A_{0}(z) (a_{0}^{(0)}+\sum_{j=1}^{p} a_{0}^{(j)} A_{j-1}^{(1)}(z))
\end{align*}
and \eqref{eq:A0Aks} follows.\qed

\begin{figure}
\begin{center}
\begin{tikzpicture}[scale=0.7]
\draw[line width=1.5pt]  (-3,0) -- (15.5,0);
\draw[line width=1.5pt]  (-3,-2.5) -- (-3,3.5);
\draw[line width=0.7pt] (-3,0) -- (-2,1) -- (-1,-1) -- (0,-1) -- (1,0) -- (2,-2) -- (3,-1) -- (4,0) -- (5,1) -- (6,0) -- (7,1) -- (8,1) -- (9,2) -- (10,3) -- (11,0) -- (12,1) -- (13,1) -- (14,-1) -- (15,0);
\draw [line width=0.5] (-2,0) -- (-2,-0.12);
\draw [line width=0.5] (-1,0) -- (-1,-0.12);
\draw [line width=0.5] (0,0) -- (0,-0.12);
\draw [line width=0.5] (1,0) -- (1,-0.12);
\draw [line width=0.5] (2,0) -- (2,-0.12);
\draw [line width=0.5] (3,0) -- (3,-0.12);
\draw [line width=0.5] (4,0) -- (4,-0.12);
\draw [line width=0.5] (5,0) -- (5,-0.12);
\draw [line width=0.5] (6,0) -- (6,-0.12);
\draw [line width=0.5] (7,0) -- (7,-0.12);
\draw [line width=0.5] (8,0) -- (8,-0.12);
\draw [line width=0.5] (9,0) -- (9,-0.12);
\draw [line width=0.5] (10,0) -- (10,-0.12);
\draw [line width=0.5] (11,0) -- (11,-0.12);
\draw [line width=0.5] (12,0) -- (12,-0.12);
\draw [line width=0.5] (13,0) -- (13,-0.12);
\draw [line width=0.5] (14,0) -- (14,-0.12);
\draw [line width=0.5] (15,0) -- (15,-0.12);
\draw [line width=0.5] (-3,1) -- (-3.12,1);
\draw [line width=0.5] (-3,2) -- (-3.12,2);
\draw [line width=0.5] (-3,3) -- (-3.12,3);
\draw [line width=0.5] (-3,0) -- (-3.12,0);
\draw [line width=0.5] (-3,-1) -- (-3.12,-1);
\draw [line width=0.5] (-3,-2) -- (-3.12,-2);
\draw [dotted] (-3,1) -- (15.5,1);
\draw [dotted] (-3,2) -- (15.5,2);
\draw [dotted] (-3,3) -- (15.5,3);
\draw [dotted] (-3,-1) -- (15.5,-1);
\draw [dotted] (-3,-2) -- (15.5,-2);
\draw [dotted] (-2,-2.5) -- (-2,3.5);
\draw [dotted] (-1,-2.5) -- (-1,3.5);
\draw [dotted] (0,-2.5) -- (0,3.5);
\draw [dotted] (1,-2.5) -- (1,3.5);
\draw [dotted] (2,-2.5) -- (2,3.5);
\draw [dotted] (3,-2.5) -- (3,3.5);
\draw [dotted] (4,-2.5) -- (4,3.5);
\draw [dotted] (5,-2.5) -- (5,3.5);
\draw [dotted] (6,-2.5) -- (6,3.5);
\draw [dotted] (7,-2.5) -- (7,3.5);
\draw [dotted] (8,-2.5) -- (8,3.5);
\draw [dotted] (9,-2.5) -- (9,3.5);
\draw [dotted] (10,-2.5) -- (10,3.5);
\draw [dotted] (11,-2.5) -- (11,3.5);
\draw [dotted] (12,-2.5) -- (12,3.5);
\draw [dotted] (13,-2.5) -- (13,3.5);
\draw [dotted] (14,-2.5) -- (14,3.5);
\draw [dotted] (15,-2.5) -- (15,3.5);
\draw (-2,-0.1) node[below, scale=0.8]{$1$};
\draw (-1,-0.1) node[below, scale=0.8]{$2$};
\draw (0,-0.1) node[below, scale=0.8]{$3$};
\draw (1,-0.1) node[below, scale=0.8]{$4$};
\draw (2,-0.1) node[below, scale=0.8]{$5$};
\draw (3,-0.1) node[below, scale=0.8]{$6$};
\draw (4,-0.1) node[below, scale=0.8]{$7$};
\draw (5,-0.1) node[below, scale=0.8]{$8$};
\draw (6,-0.1) node[below, scale=0.8]{$9$};
\draw (7,-0.1) node[below, scale=0.8]{$10$};
\draw (8,-0.1) node[below, scale=0.8]{$11$};
\draw (9,-0.1) node[below, scale=0.8]{$12$};
\draw (10,-0.1) node[below, scale=0.8]{$13$};
\draw (11,-0.1) node[below, scale=0.8]{$14$};
\draw (12,-0.1) node[below, scale=0.8]{$15$};
\draw (13,-0.1) node[below, scale=0.8]{$16$};
\draw (14,-0.1) node[below, scale=0.8]{$17$};
\draw (15,-0.1) node[below, scale=0.8]{$18$};
\draw (-3.1,1) node[left, scale=0.8]{$1$};
\draw (-3.1,2) node[left, scale=0.8]{$2$};
\draw (-3.1,3) node[left, scale=0.8]{$3$};
\draw (-3.1,0) node[left, scale=0.8]{$0$};
\draw (-3.1,-1) node[left, scale=0.8]{$-1$};
\draw (-3.1,-2) node[left, scale=0.8]{$-2$};
\end{tikzpicture}
\end{center}
\caption{Example, in the case $p=3$, of a path in the collection $\mathcal{P}_{[18,0]}$.}
\label{bridge}
\end{figure}
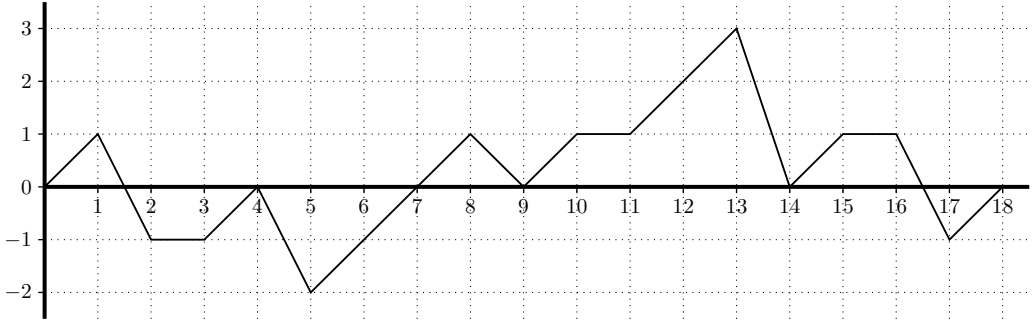

\begin{remark}
By analogy, it is clear that relations of the same kind as \eqref{eq:A0Aks} and \eqref{eq:AjA0Ajm} exist between the functions $A_{j}^{(k)}$ and $A_{j}^{(k+1)}$ for every integer $k\geq 0$. They are
\begin{equation}\label{relA0kAkpone}
\begin{aligned}
A_{0}^{(k)}(z) & =\frac{1}{z-a_{k}^{(0)}-\sum_{j=1}^{p}a_{k}^{(j)}\,A_{j-1}^{(k+1)}(z)}\\
A_{j}^{(k)}(z) & =A_{0}^{(k)}(z)\,A^{(k+1)}_{j-1}(z)\qquad 1\leq j\leq p.
\end{aligned}
\end{equation}
\end{remark}

\begin{remark}
The following relation is obtained from \eqref{eq:relAs} and \eqref{eq:A0Ai:2}: For each $0\leq i\leq p-1$,
\[
z A_{0}(z)-1=\sum_{j=0}^{i} a_{0}^{(j)}A_{j}(z)+\sum_{j=i+1}^{p} a_{0}^{(j)} A_{i}(z) A_{j-i-1}^{(i+1)}(z).
\]
In the case $i=0$ this reduces to \eqref{eq:A0Aks}.
\end{remark}

\smallskip

\noindent\textbf{Proof of Theorem~\ref{theo:relseriesWs}:} We first prove \eqref{eq:WjWiAj}. The argument is almost identical to the proof of \eqref{eq:A0Ai:2}, but for the convenience of the reader we reproduce it. Let $0\leq i<j\leq p$ be fixed, and let $n\geq j$. In any given path $\gamma\in\mathcal{P}_{[n,j]}$, we can find a unique upstep satisfying the following property: it is of the form $(\kappa,i)\rightarrow(\kappa,i+1)$ (i.e., with initial height $i$ and final height $i+1$) and it is the last upstep in $\gamma$ of this form as we traverse the path from left to right. The uniqueness is obvious and the existence of such a step is a consequence of the fact that $\gamma$ starts at height $0$ and ends at height $j>i$. As in the proof of \eqref{eq:A0Ai:2}, we denote by $\kappa_{i}(\gamma)$ the abscissa of the initial point of this special upstep in $\gamma$. The possible values of $\kappa_{i}(\gamma)$ are again $i\leq \kappa_{i}(\gamma)\leq n-j+i$.

We partition the collection $\mathcal{P}_{[n,j]}$ as follows:
\begin{equation}\label{eq:partPnj}
\mathcal{P}_{[n,j]}=\bigcup_{k=i}^{n-j+i}\mathcal{P}_{[n,j,k]}
\end{equation}
where
\[
\mathcal{P}_{[n,j,k]}=\{\gamma\in\mathcal{P}_{[n,j]}: \kappa_{i}(\gamma)=k\}\qquad i\leq k\leq n-j+i.
\]
The collections $\mathcal{P}_{[n,j,k]}$ are clearly disjoint, so \eqref{eq:partPnj} is indeed a partition. Given a path $\gamma\in\mathcal{P}_{[n,j,k]}$, we subdivide it into three parts $\gamma_{1}$, $\gamma_{2}$, $\gamma_{3}$, defined as follows: $\gamma_{1}$ is the portion of $\gamma$ on the interval $0\leq x\leq k$; $\gamma_{2}$ is the special upstep $(k,i)\rightarrow(k+1,i+1)$; and $\gamma_{3}$ is the remaining portion of $\gamma$ on the interval $k+1\leq x\leq n$. Then $\gamma_{1}$ is actually a path in $\mathcal{P}_{[k,i]}$. From the  definition of $\kappa_{i}(\gamma)$ we deduce that $\gamma_{3}$ has no point below the line $y=i+1$, and therefore $\gamma_{3}$ can be identified with a horizontal translation of a path in $\mathcal{D}_{[n-k-1,j-i-1]}^{(i+1)}$. From the relation $w(\gamma)=w(\gamma_{1})w(\gamma_{2})w(\gamma_{3})=w(\gamma_{1})w(\gamma_{3})$ and the above identifications of $\gamma_{1}$ and $\gamma_{3}$, we deduce that
\[
\sum_{\gamma\in\mathcal{P}_{[n,j,k]}}w(\gamma)=W_{[k,i]} A_{[n-k-1,j-i-1]}^{(i+1)}.
\]
This identity and \eqref{eq:partPnj} imply    
\[
W_{[n,j]}=\sum_{k=i}^{n-j+i}\sum_{\gamma\in\mathcal{P}_{[n,j,k]}}w(\gamma)=\sum_{k=i}^{n-j+i}W_{[k,i]} A_{[n-k-1,j-i-1]}^{(i+1)},\qquad n\geq j,
\]
and this immediately proves \eqref{eq:WjWiAj}. Formula \eqref{eq:WjW0Aj} is a particular case of \eqref{eq:WjWiAj}.

Now we justify \eqref{eq:W0expansion}. For $n\geq 1$, the collection $\mathcal{P}_{[n,0]}$ can be partitioned as follows:
\begin{equation}\label{eq:partPn0}
\mathcal{P}_{[n,0]}=\bigcup_{j=-1}^{p}\mathcal{U}_{[n,j]},\qquad n\geq 1,
\end{equation} 
where
\[
\mathcal{U}_{[n,j]}:=\{\gamma\in\mathcal{P}_{[n,0]}: \mbox{the first step in $\gamma$ is}\,\,(0,0)\rightarrow(1,-j)\},\qquad -1\leq j\leq p.
\]
The first step of a path in $\mathcal{U}_{[n,0]}$ is the level step $(0,0)\rightarrow(1,0)$, so it is clear that 
\begin{equation}\label{eq:weightUn0}
\sum_{\gamma\in\mathcal{U}_{[n,0]}}w(\gamma)=a_{0}^{(0)} W_{[n-1,0]}.
\end{equation}

We analyze now the paths in $\mathcal{U}_{[n,j]}$, $1\leq j\leq p$. In any $\gamma\in\mathcal{U}_{[n,j]}$, we can find a unique upstep satisfying the following condition: it is of the form $(k,-1)\rightarrow(k+1,0)$ (i.e., with initial height $-1$ and final height $0$) and it is the first upstep in $\gamma$ of this form as we traverse the path from left to right. The existence and uniqueness of such upstep is obvious since $\gamma$ ends at height $0$ and its initial step is a downstep.  Let us denote by $\lambda(\gamma)$ the abscissa of the initial point of this special upstep in $\gamma$. It is easy to see that the possible range of values of $\lambda(\gamma)$ is $j\leq \lambda(\gamma)\leq n-1$.

Then we have 
\begin{equation}\label{eq:Unjpartition}
\mathcal{U}_{[n,j]}=\bigcup_{k=j}^{n-1}\mathcal{U}_{[n,j,k]}
\end{equation}
where 
\begin{equation}\label{eq:def:Unjk}
\mathcal{U}_{[n,j,k]}:=\{\gamma\in\mathcal{U}_{[n,j]}: \lambda(\gamma)=k\},\qquad j\leq k\leq n-1.
\end{equation}
Obviously, the collections in \eqref{eq:def:Unjk} are pairwise disjoint. 

A path $\gamma\in\mathcal{U}_{[n,j,k]}$ can be subdivided into four parts $\gamma_{l}$, $1\leq l\leq 4$, as follows: $\gamma_{1}$ is the downstep $(0,0)\rightarrow(1,-j)$; $\gamma_{2}$ is the portion of $\gamma$ on the interval $1\leq x\leq k$; $\gamma_{3}$ is the special upstep $(k,-1)\rightarrow(k+1,0)$; and $\gamma_{4}$ is the remaining portion of $\gamma$ on the interval $k+1\leq x\leq n$. It follows from the definition of $\lambda(\gamma)$ that $\gamma_{2}$ has no point above the line $y=-1$. The form of $\gamma_{2}$ shows that it can be identified with a horizontal translation of a path in $\widehat{\mathcal{D}}_{[k-1,j-1]}^{(1)}$ (cf. \eqref{eq:defDnjqtilde}). The portion $\gamma_{4}$ is clearly a horizontal translation of a path in $\mathcal{P}_{[n-k-1,0]}$. We have $w(\gamma)=w(\gamma_{1})w(\gamma_{2})w(\gamma_{3})w(\gamma_{4})=a_{-j}^{(j)}\,w(\gamma_{2})w(\gamma_{4})$, and therefore from the identifications of $\gamma_{2}$ and $\gamma_{4}$ we conclude that
\[
\sum_{\gamma\in\mathcal{U}_{[n,j,k]}}w(\gamma)=a_{-j}^{(j)}\, B_{[k-1,j-1]}^{(1)} W_{[n-k-1,0]},
\]
cf. \eqref{eq:def:Bnjq} and \eqref{eq:def:Wnj}. This identity and \eqref{eq:Unjpartition} give the expression
\[
\sum_{\gamma\in\mathcal{U}_{[n,j]}}w(\gamma)=\sum_{k=j}^{n-1}\sum_{\gamma\in\mathcal{U}_{[n,j,k]}}w(\gamma)=\sum_{k=j}^{n-1} a_{-j}^{(j)}\,B_{[k-1,j-1]}^{(1)} W_{[n-k-1,0]}.
\]
This can also be written as
\begin{equation}\label{eq:weightpolyUnj}
\sum_{\gamma\in\mathcal{U}_{[n,j]}}w(\gamma)=
\sum_{k\in\mathbb{Z}}a_{-j}^{(j)}\,B_{[k-1,j-1]}^{(1)} W_{[n-k-1,0]},\qquad 1\leq j\leq p.
\end{equation}

Now we want to obtain an expression for the weight polynomial associated with the collection $\mathcal{U}_{[n,-1]}$. Recall that by definition, the first step in a path $\gamma\in\mathcal{U}_{[n,-1]}$ is the upstep $(0,0)\rightarrow(1,1)$. We will partition $\mathcal{U}_{[n,-1]}$ according to the \emph{position of the first downstep in $\gamma\in\mathcal{U}_{[n,-1]}$ that touches or crosses the real axis}. We say that \emph{a downstep touches the real axis} if its final point is on the axis, and \emph{a downstep crosses the real axis} if its initial point is above the axis and its final point is below the axis. It is clear that every path in $\mathcal{U}_{[n,-1]}$ has a downstep that touches or crosses the real axis, since the path has height $1$ at $x=1$ and it ends on the real axis. 

We define now the following subcollections of $\mathcal{U}_{[n,-1]}$. Let $\mathcal{V}_{[n,\ell]}$, $1\leq \ell\leq p$, denote the collection of all paths $\gamma\in\mathcal{U}_{[n,-1]}$ such that the first downstep in $\gamma$ that touches or crosses the real axis is of the form $(k,\ell)\rightarrow(k+1,0)$, i.e., it is a downstep that touches the real axis with initial height $\ell$. Let $\mathcal{C}_{[n,r,s]}$, $1\leq r\leq p-1$, $1\leq s\leq p-r$, denote the collection of all paths $\gamma\in\mathcal{U}_{[n,-1]}$ such that the first downstep in $\gamma$ that touches or crosses the real axis is of the form $(k,r)\rightarrow(k+1,-s)$, i.e., it is a downstep that crosses the real axis with initial height $r$ and final height $-s$. It is clear that $\mathcal{U}_{[n,-1]}$ is the disjoint union of all the subcollections $\mathcal{V}_{[n,\ell]}$ and $\mathcal{C}_{[n,r,s]}$. For a path in $\gamma\in\mathcal{U}_{[n,-1]}$, let $\eta(\gamma)$ denote the abscissa of the initial point of the first downstep in $\gamma$ that touches or crosses the real axis.

Let $1\leq \ell\leq p$. In the collection $\mathcal{V}_{[n,\ell]}$, the possible range of $\eta(\gamma)$ is clearly $\ell\leq \eta(\gamma)\leq n-1$. So we have the disjoint union
\[
\mathcal{V}_{[n,\ell]}=\bigcup_{k=\ell}^{n-1}\mathcal{V}_{[n,\ell,k]}
\]
where
\[
\mathcal{V}_{[n,\ell,k]}:=\{\gamma\in\mathcal{V}_{[n,\ell]}: \eta(\gamma)=k\},\qquad \ell\leq k\leq n-1.
\]
A path $\gamma$ in $\mathcal{V}_{[n,\ell,k]}$ is subdivided into four parts $\gamma_{j}$, $1\leq j\leq 4$. The first part $\gamma_{1}$ is the upstep $(0,0)\rightarrow(1,1)$, $\gamma_{2}$ is the portion of $\gamma$ on the interval $1\leq x\leq k$, $\gamma_{3}$ is the downstep $(k,\ell)\rightarrow(k+1,0)$, and $\gamma_{4}$ is the portion of $\gamma$ on the interval $k+1\leq x\leq n$. By definition of $\eta(\gamma)$, it is clear that $\gamma_{2}$ has no point below the line $y=1$, so $\gamma_{2}$ can be identified as a horizontal translation of a path in $\mathcal{D}_{[k-1,\ell-1]}^{(1)}$. The portion $\gamma_{4}$ is a horizontal translation of a path in $\mathcal{P}_{[n-k-1,0]}$. Since $w(\gamma)=w(\gamma_{1})w(\gamma_{2})w(\gamma_3)w(\gamma_4)=a_{0}^{(\ell)}w(\gamma_{2})w(\gamma_{4})$, we obtain
\[
\sum_{\gamma\in\mathcal{V}_{[n,\ell]}}w(\gamma)=\sum_{k=\ell}^{n-1}\sum_{\gamma\in\mathcal{V}_{[n,\ell,k]}}w(\gamma)=\sum_{k=\ell}^{n-1}a_{0}^{(\ell)} A^{(1)}_{[k-1,\ell-1]} W_{[n-k-1,0]}.
\]
So we have
\begin{equation}\label{eq:weightVnl}
\sum_{\gamma\in\mathcal{V}_{[n,\ell]}}w(\gamma)=\sum_{k\in\mathbb{Z}}a_{0}^{(\ell)} A^{(1)}_{[k-1,\ell-1]} W_{[n-k-1,0]},\qquad 1\leq \ell\leq p.
\end{equation}

Now we analyze the paths in the collection $\mathcal{C}_{[n,r,s]}$, $1\leq r\leq p-1$, $1\leq s\leq p-r$. It is easy to see that for the collection $\mathcal{C}_{[n,r,s]}$ to be non-empty it is necessary and sufficient that $r+s+1\leq n$. Let $\gamma$ be a path in $\mathcal{C}_{[n,r,s]}$. As before, $\lambda(\gamma)$ denotes the abscissa of the initial point of the first upstep in $\gamma$ from height $-1$ to height $0$ as we traverse the path from left to right. A simple counting shows that the possible range of values of $\eta(\gamma)$ is $r\leq \eta(\gamma)\leq n-s-1$. For each $\eta(\gamma)$ in this range, the possible values of $\lambda(\gamma)$ are $\eta(\gamma)+s\leq \lambda(\gamma)\leq n-1$.

Given a path $\gamma\in\mathcal{C}_{[n,r,s]}$, we subdivide it into six parts $\gamma_{j}$, $1\leq j\leq 6$, defined as follows: $\gamma_{1}$ is the initial upstep $(0,0)\rightarrow(1,1)$, $\gamma_{2}$ is the portion of $\gamma$ on the interval $1\leq x\leq \eta(\gamma)$ (which by definition of $\eta(\gamma)$ is above the line $y=1$), $\gamma_{3}$ is the special downstep $(\eta(\gamma),r)\rightarrow(\eta(\gamma)+1,-s)$ that crosses the real axis, $\gamma_{4}$ is the portion of $\gamma$ on the interval $\eta(\gamma)+1\leq x\leq \lambda(\gamma)$ (which by definition of $\lambda(\gamma)$ is below the line $y=-1$ and ends at height $-1$), $\gamma_{5}$ is the upstep $(\lambda(\gamma),-1)\rightarrow(\lambda(\gamma)+1,0)$, and $\gamma_{6}$ is the remaining portion of $\gamma$ on the interval $\lambda(\gamma)+1\leq x\leq n$. It is clear that $\gamma_{2}$ is a horizontal translation of a path in $\mathcal{D}_{[\eta(\gamma)-1,r-1]}^{(1)}$, $\gamma_{4}$ is a horizontal translation of a path in $\widehat{\mathcal{D}}_{[\lambda(\gamma)-\eta(\gamma)-1,s-1]}^{(1)}$, and $\gamma_{6}$ is a horizontal translation of a path in $\mathcal{P}_{[n-\lambda(\gamma)-1,0]}$. Since $w(\gamma_{1})=w(\gamma_{5})=1$ and $w(\gamma_{3})=a_{-s}^{(r+s)}$, we have $w(\gamma)=a_{-s}^{(r+s)} w(\gamma_{2}) w(\gamma_{4}) w(\gamma_{6})$. Using the indices $k=\eta(\gamma)$ and $\ell=\lambda(\gamma)$, the above identifications allow us to conclude that
\[
\sum_{\gamma\in\mathcal{C}_{[n,r,s]}}w(\gamma)=a_{-s}^{(r+s)}\sum_{k=r}^{n-s-1}\sum_{\ell=k+s}^{n-1} A_{[k-1,r-1]}^{(1)} B_{[\ell-k-1,s-1]}^{(1)}W_{[n-\ell-1,0]}.
\]
The reader can easily check that this is equivalently expressed as 
\begin{equation}\label{eq:weightCnrs}
\sum_{\gamma\in\mathcal{C}_{[n,r,s]}}w(\gamma)=a_{-s}^{(r+s)}\sum_{k\in\mathbb{Z}}\sum_{\ell\in\mathbb{Z}}A_{[k-1,r-1]}^{(1)} B_{[\ell-k-1,s-1]}^{(1)}W_{[n-\ell-1,0]}
\end{equation}
and each summation has finitely many non-zero terms.

Recall that $\mathcal{U}_{[n,-1]}$ is the disjoint union of the collections $\mathcal{V}_{[n,\ell]}$, $1\leq \ell\leq p$, and $\mathcal{C}_{[n,r,s]}$, $1\leq r\leq p-1$, $1\leq s\leq p-r$. It then follows from \eqref{eq:weightVnl} and \eqref{eq:weightCnrs} that
\begin{align}\label{eq:weightUnm1}
\sum_{\gamma\in\mathcal{U}_{[n,-1]}}w(\gamma) & =\sum_{\ell=1}^{p}\sum_{\gamma\in\mathcal{V}_{[n,\ell]}}w(\gamma)+\sum_{r=1}^{p-1}\sum_{s=1}^{p-r}\sum_{\gamma\in\mathcal{C}_{[n,r,s]}}w(\gamma)\notag\\
& =\sum_{\ell=1}^{p}\sum_{k\in\mathbb{Z}}a_{0}^{(\ell)} A^{(1)}_{[k-1,\ell-1]} W_{[n-k-1,0]}\notag\\
& +\sum_{r=1}^{p-1}\sum_{s=1}^{p-r}\sum_{k\in\mathbb{Z}}\sum_{\ell\in\mathbb{Z}}a_{-s}^{(r+s)} A_{[k-1,r-1]}^{(1)} B_{[\ell-k-1,s-1]}^{(1)}W_{[n-\ell-1,0]}.
\end{align}
For the rest of the argument, it is convenient to perform a change of variable in the quadruple sum in \eqref{eq:weightUnm1}, which is to use $j=r+s$ and $s$ as indices of summation. Then the expression we obtain is  
\begin{align*}
& \sum_{j=2}^{p}\sum_{s=1}^{j-1}\sum_{k\in\mathbb{Z}}\sum_{\ell\in\mathbb{Z}} a_{-s}^{(j)}\,A_{[k-1,j-s-1]}^{(1)} B_{[\ell-k-1,s-1]}^{(1)} W_{[n-\ell-1,0]}\\
= & \sum_{j=2}^{p}\sum_{s=1}^{j-1}\sum_{\ell\in\mathbb{Z}}\sum_{k\in\mathbb{Z}} a_{-s}^{(j)}\, A_{[\ell-1,j-s-1]}^{(1)} B_{[k-\ell-1,s-1]}^{(1)} W_{[n-k-1,0]} 
\end{align*}
where we interchanged the indices $k$ and $\ell$. So we conclude that
\begin{align}\label{eq:weightUnm1bis}
\sum_{\gamma\in\mathcal{U}_{[n,-1]}}w(\gamma) & =\sum_{j=1}^{p}\sum_{k\in\mathbb{Z}}a_{0}^{(j)} A^{(1)}_{[k-1,j-1]} W_{[n-k-1,0]}\notag\\
& +\sum_{j=2}^{p}\sum_{s=1}^{j-1}\sum_{\ell\in\mathbb{Z}}\sum_{k\in\mathbb{Z}} a_{-s}^{(j)}\, A_{[\ell-1,j-s-1]}^{(1)} B_{[k-\ell-1,s-1]}^{(1)} W_{[n-k-1,0]}.
\end{align}
It then follows from \eqref{eq:partPn0}, \eqref{eq:weightUn0}, \eqref{eq:weightpolyUnj}, and \eqref{eq:weightUnm1bis} that for every $n\geq 1$,
\begin{align}\label{eqsummary}
W_{[n,0]} & =a_{0}^{(0)} W_{[n-1,0]}+\sum_{j=1}^{p}\sum_{k\in\mathbb{Z}}a_{-j}^{(j)}\,B_{[k-1,j-1]}^{(1)} W_{[n-k-1,0]}\notag\\
& +\sum_{j=1}^{p}\sum_{k\in\mathbb{Z}}a_{0}^{(j)} A^{(1)}_{[k-1,j-1]} W_{[n-k-1,0]}\notag\\
& +\sum_{j=2}^{p}\sum_{s=1}^{j-1}\sum_{\ell\in\mathbb{Z}}\sum_{k\in\mathbb{Z}}a_{-s}^{(j)}\, A_{[\ell-1,j-s-1]}^{(1)} B_{[k-\ell-1,s-1]}^{(1)} W_{[n-k-1,0]}.
\end{align}
We introduce now the following definition: 
\[
A_{[t,-1]}^{(1)}=B_{[t,-1]}^{(1)}:=\delta_{t,-1}=\begin{cases}
1, & t=-1,\\
0, & t\neq -1.
\end{cases}
\]
We leave to the reader the easy task of verifying that with this notation, \eqref{eqsummary} reduces to the identity
\begin{equation}\label{eq:redWn0}
W_{[n,0]}=a_{0}^{(0)} W_{[n-1,0]}+\sum_{j=1}^{p}\sum_{s=0}^{j}\sum_{\ell\in\mathbb{Z}}\sum_{k\in\mathbb{Z}} a_{-s}^{(j)}\, A_{[\ell-1,j-s-1]}^{(1)} B_{[k-\ell-1,s-1]}^{(1)} W_{[n-k-1,0]},
\end{equation}
valid for every $n\geq 1$. If we define $A_{-1}^{(1)}(z)\equiv B_{-1}^{(1)}(z)=\sum_{k\in\mathbb{Z}}\frac{\delta_{k,-1}}{z^{k+1}}\equiv 1$, then \eqref{eq:redWn0} expresses the relation
\[
z\,W_{0}(z)-1=a_{0}^{(0)}\, W_{0}(z)+\sum_{j=1}^{p}\sum_{s=0}^{j} a_{-s}^{(j)}\, A_{j-s-1}^{(1)}(z)\, B_{s-1}^{(1)}(z)\, W_{0}(z)
\]   
which is equivalent to \eqref{eq:W0expansion}.\qed

\section{Generalized Stieltjes--Rogers polynomials and related families of polynomials}\label{sec:STDyck}

In this section we analyze the collections of lattice paths $\mathcal{R}_{[n,j]}$, $\mathcal{S}_{[n,j]}$, $\widehat{\mathcal{S}}_{[n,j]}$ defined in the introduction. Recall that by definition the paths in these collections have steps of only two types as indicated in \eqref{SRedges}, and we use the notation \eqref{eq:ananp}. So now we have 
\begin{align*}
w((n,m)\rightarrow(n+1,m+1)) & = 1,\\
w((n,m)\rightarrow(n+1,m-p)) & = a_{m-p}.
\end{align*}

The formal series associated with the families of polynomials \eqref{def:wpolyRnj}--\eqref{def:wpolyTnj} are the following expressions, defined for each $0\leq j\leq p$:
\begin{align}
R_{j}(z) & :=\sum_{n=0}^{\infty}\frac{R_{[n,j]}}{z^{n+1}}=\sum_{m=0}^{\infty}\frac{R_{[m(p+1)+j,j]}}{z^{m(p+1)+j+1}},\label{fsRj}\\
S_{j}(z) & :=\sum_{n=0}^{\infty}\frac{S_{[n,j]}}{z^{n+1}}=\sum_{m=0}^{\infty}\frac{S_{[m(p+1)+j,j]}}{z^{m(p+1)+j+1}},\label{fsSj}\\
T_{j}(z) & :=\sum_{n=0}^{\infty}\frac{T_{[n,j]}}{z^{n+1}}=\sum_{m=0}^{\infty}\frac{T_{[m(p+1)+j,j]}}{z^{m(p+1)+j+1}}.\label{fsTj}
\end{align}

We also introduce the families of shifted paths and the corresponding weight polynomials. For an integer $q\geq 0$, let
\begin{align*}
\mathcal{S}^{(q)}_{[n,j]} & :=\{\gamma+q: \gamma\in\mathcal{S}_{[n,j]}\},\\
\widehat{\mathcal{S}}^{(q)}_{[n,j]} & :=\{\gamma-q: \gamma\in\widehat{\mathcal{S}}_{[n,j]}\},
\end{align*}
recall the definition of $\gamma\pm q$ given in the introduction. The associated weight polynomials are
\begin{align}
S^{(q)}_{[n,j]} & :=\sum_{\gamma\in\mathcal{S}_{[n,j]}}w(\gamma+q),\label{def:Sqnj}\\
T^{(q)}_{[n,j]} & :=\sum_{\gamma\in\widehat{\mathcal{S}}_{[n,j]}}w(\gamma-q).\label{def:Tqnj}
\end{align}
The corresponding formal series are the expressions
\begin{align}
S^{(q)}_{j}(z) & :=\sum_{m=0}^{\infty}\frac{S^{(q)}_{[m(p+1)+j,j]}}{z^{m(p+1)+j+1}},\label{fsSjq}\\
T^{(q)}_{j}(z) & :=\sum_{m=0}^{\infty}\frac{T^{(q)}_{[m(p+1)+j,j]}}{z^{m(p+1)+j+1}}.\label{fsTjq}
\end{align}

In the following result we gather some elementary properties of the collections $\mathcal{R}_{[n,j]}$, $\mathcal{S}_{[n,j]}$, $\widehat{\mathcal{S}}_{[n,j]}$, and the corrresponding weight polynomials \eqref{def:wpolyRnj}--\eqref{def:wpolyTnj}.

\begin{proposition}\label{prop:elemprop}
Let $n=m(p+1)+j$, $m\geq 1$, $0\leq j\leq p$. The following properties hold:
\begin{itemize}
\item[$i)$] The initial $p$ steps of any path in $\mathcal{S}_{[n,j]}$ are the upsteps $(k,k)\rightarrow (k+1,k+1)$, $0\leq k\leq p-1$.
\item[$ii)$] If $j=0$, the last step of a path in $\mathcal{S}_{[n,0]}=\mathcal{S}_{[m(p+1),0]}$ is the downstep $(m(p+1)-1,p)\rightarrow(m(p+1),0)$.
\item[$iii)$] $R_{[n,j]}$, $S_{[n,j]}$, and $T_{[n,j]}$ are homogeneous polynomials of degree $m$ in the variables $\{a_{k}: -mp\leq k\leq (m-1)p+j\}$, $\{a_{k}:0\leq k\leq (m-1)p+j\}$, and $\{a_{k}:-j-mp\leq k\leq -p\}$, respectively. 
\item[$iv)$] There is a bijection between the sets $\mathcal{S}_{[n,j]}$ and $\widehat{\mathcal{S}}_{[n,j]}$, established by the map $\gamma\mapsto\widehat{\gamma}$ that is defined as follows: given a path $\gamma\in\mathcal{S}_{[n,j]}$, it is first reflected with respect to the real axis, and the result is then reflected with respect to the vertical line $x=\frac{n}{2}$, to obtain the path $\widehat{\gamma}\in\widehat{\mathcal{S}}_{[n,j]}$. In consequence, if we write $S_{[n,j]}=S_{[n,j]}(a_{0},a_{1},\ldots,a_{(m-1)p+j})$, then $T_{[n,j]}=S_{[n,j]}(a_{-p},a_{-p-1},\ldots,a_{-mp-j})$, i.e., $T_{[n,j]}$ is obtained by replacing in $S_{[n,j]}$ the variable $a_{k}$ by the variable $a_{-p-k}$ for each $0\leq k\leq (m-1)p+j$.
\item[$v)$] We have
\begin{equation}\label{eq:cardinality}
\mathrm{card}(\mathcal{R}_{[n,j]})=\binom{m(p+1)+j}{m},\qquad \mathrm{card}(\mathcal{S}_{[n,j]})=\frac{j+1}{pm+j+1}\binom{m(p+1)+j}{m}.
\end{equation}
\end{itemize}
\end{proposition}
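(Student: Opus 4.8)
The plan is to treat the five items largely independently, since each of $(i)$--$(iv)$ is a local structural observation, while the enumeration in $(v)$ is where the real work lies.

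For $(i)$ and $(ii)$ I would argue from the forced behaviour of the steps under the constraint $\min(\gamma)=0$. Since the only steps are upsteps $(+1)$ and downsteps $(-p)$, a downstep starting at height $h$ lands at $h-p$; for $0\leq h\leq p-1$ this is negative and hence forbidden. Starting from $(0,0)$ this forces an upstep to $(1,1)$, and then (as $m\geq 1$ gives length $n\geq p+1>p$) the heights $1,\ldots,p-1$ are successively reached and each again forces an upstep, proving $(i)$. For $(ii)$ a step arriving at $(n,0)$ is either an upstep from height $-1<0$ (impossible) or a downstep from height $p$, which is the asserted last step. I would also record here the counting identity used throughout: combining \eqref{relcountupdown} with $(\#\text{upsteps})+(\#\text{downsteps})=n=m(p+1)+j$ gives $(p+1)\cdot(\#\text{downsteps})=m(p+1)$, so every path in each of the three collections has exactly $m$ downsteps.

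For $(iii)$, upsteps have weight $1$ and a downstep starting at height $h$ has weight $a_{h-p}$, so the weight of any path is a product of its $m$ downstep weights, hence a monomial of degree $m$; summing gives homogeneity of degree $m$. To identify the variables I would bound the admissible downstep heights. For $\mathcal{S}_{[n,j]}$ one has $0\leq h\leq pm+j$ (the global maximum cannot exceed the total number $pm+j$ of upsteps), so the subscripts $h-p$ lie in $[0,(m-1)p+j]$, both extremes being attained (e.g.\ by the all-upsteps-first path). The ranges for $\mathcal{R}_{[n,j]}$ (heights unrestricted, minimum $-pm$ from an all-downsteps-first path) and for $\widehat{\mathcal{S}}_{[n,j]}$ (heights $\leq 0$, minimum $-j-pm$) follow identically. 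For $(iv)$, the composition of reflection in the $x$-axis with reflection in $x=n/2$ is the involution $(x,y)\mapsto(n-x,-y)$. A short check shows it carries the upstep $(x,y)\to(x+1,y+1)$ to the upstep $(n-x-1,-y-1)\to(n-x,-y)$ and a $(-p)$-downstep to a $(-p)$-downstep; it interchanges $(0,0)\leftrightarrow(n,0)$ and $(n,j)\leftrightarrow(0,-j)$ and turns ``$\geq 0$'' into ``$\leq 0$'', so it maps $\mathcal{S}_{[n,j]}$ onto $\widehat{\mathcal{S}}_{[n,j]}$ and, being an involution, is a bijection. Tracking weights, a downstep from height $h$ (weight $a_{h-p}$) maps to a downstep from height $p-h$ (weight $a_{-h}$); writing $k=h-p$ this is exactly the substitution $a_k\mapsto a_{-p-k}$, which yields the stated identity between $T_{[n,j]}$ and $S_{[n,j]}$.

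Part $(v)$ is the main obstacle, and the only genuinely non-elementary ingredient. The count $\mathrm{card}(\mathcal{R}_{[n,j]})=\binom{n}{m}$ is immediate, since a path is determined by the placement of its $m$ downsteps among $n$ steps and every placement is admissible (heights are unrestricted). For $\mathcal{S}_{[n,j]}$ I would pass to strictly positive paths and apply the cycle lemma. Prepending an upstep gives a bijection from $\mathcal{S}_{[n,j]}$ onto the set of length-$(n+1)$ sequences of $m$ downsteps and $pm+j+1$ upsteps all of whose partial sums are $\geq 1$ (such a sequence must begin with an upstep, and deleting it recovers a nonnegative path). Each step value is $\leq 1$ and the total sum is $j+1\geq 1$, so the Dvoretzky--Motzkin cycle lemma applies: among the $n+1$ cyclic rotations of any such arrangement exactly $j+1$ are strictly positive. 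Double-counting the pairs (arrangement, good rotation)---there are $(j+1)\binom{n+1}{m}$ of them by the lemma, and $(n+1)$ per good sequence---shows that the number of strictly positive sequences is $\frac{j+1}{n+1}\binom{n+1}{m}$. What remains is the routine simplification
\[
\frac{j+1}{n+1}\binom{n+1}{m}=\frac{j+1}{(p+1)m+j+1}\cdot\frac{(p+1)m+j+1}{pm+j+1}\binom{(p+1)m+j}{m}=\frac{j+1}{pm+j+1}\binom{m(p+1)+j}{m},
\]
which is precisely \eqref{eq:cardinality}.
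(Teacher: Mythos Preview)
Your treatment of $(i)$--$(iv)$ is correct and matches the paper's proof, which is equally terse: the paper simply says $(i)$ and $(ii)$ follow from the definitions, $(iii)$ from the fact that each path has exactly $m$ downsteps, and for $(iv)$ it records the same involution and the same observation that a downstep with weight $a_k$ is carried to one with weight $a_{-p-k}$.

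The genuine difference is in $(v)$. The paper obtains $\mathrm{card}(\mathcal{R}_{[n,j]})$ exactly as you do, but for $\mathrm{card}(\mathcal{S}_{[n,j]})$ it defers to Remark~\ref{rem:card}: setting all $a_k=1$ so that $S_j(z)=S_j^{(q)}(z)$, the relations of Corollary~\ref{cor:Aseries} collapse to $S_0(z)=1/(z-S_0(z)^{p})$ and $S_j(z)=S_0(z)^{j+1}$; the Fuss--Catalan count is then read off via Lagrange inversion applied to $h(w)=S_0(1/w)$, which inverts $f(t)=t/(t^{p+1}+1)$. Your route via the Dvoretzky--Motzkin cycle lemma is a genuinely different, and more elementary, argument: it is self-contained and does not rely on the generating-function identities built up earlier in the paper. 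The paper's approach, by contrast, illustrates that the enumeration is already encoded in the algebraic relations of Theorem~\ref{theo:relseriesAs}. Both are standard derivations of the Fuss--Catalan numbers; yours is shorter and independent of the rest of the machinery, while the paper's highlights the internal consistency of the generating-function framework.
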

\begin{proof}
Properties $i)$ and $ii)$ are immediately obtained from the definitions of $\mathcal{S}_{[n,j]}$ and $\mathcal{S}_{[m(p+1),0]}$. Property $iii)$ follows from the fact that each path in the collections $\mathcal{R}_{[n,j]}$ and $\widehat{\mathcal{S}}_{[n,j]}$ contains exactly $m$ downsteps (see \eqref{relcountupdown}, which is also valid in $\widehat{\mathcal{S}}_{[n,j]}$). In consequence, in the expressions of the polynomials $R_{[n,j]}$, $S_{[n,j]}$, and $T_{[n,j]}$, each term $w(\gamma)$ is the product of $m$ variables $a_{k}$, counting multiplicities. A simple counting shows that these polynomials are indeed expressed in terms of the variables $a_{k}$ indicated.

It is very easy to see that the map $\gamma\mapsto\widehat{\gamma}$ is well-defined and is a bijection. This map transforms a downstep with weight $a_{k}$, $k\geq 0$, into a downstep with weight $a_{-p-k}$. This implies immediately the indicated relation between the polynomials $S_{[n,j]}$ and $T_{[n,j]}$.

The $m$ downsteps in a path $\gamma\in\mathcal{R}_{[n,j]}$ can be positioned in any order among the $n=m(p+1)+j$ steps in $\gamma$. So it is clear that the cardinality of $\mathcal{R}_{[n,j]}$ is $\binom{m(p+1)+j}{m}$. See Remark~\ref{rem:card} for a justification of the cardinality of $\mathcal{S}_{[n,j]}$ in \eqref{eq:cardinality}.
\end{proof}

Formulas \eqref{eq:cardinality} are also valid if $n=j\in\{0,\ldots,p\}$, in which case we have $\mathrm{card}(\mathcal{R}_{[j,j]})=\mathrm{card}(\mathcal{S}_{[j,j]})=1$. We also have $R_{[j,j]}=S_{[j,j]}=T_{[j,j]}=1$.

\begin{figure}
\begin{center}
\begin{tikzpicture}[scale=0.7]
\draw[line width=1.5pt]  (-3,0) -- (13.5,0);
\draw[line width=1.5pt]  (-3,-2.5) -- (-3,2.5);
\draw[line width=0.7pt] (-3,0) -- (-2,-2) -- (-1,-1) -- (0,0) -- (1,1) -- (2,-1) -- (3,0) -- (4,1) -- (5,2) -- (6,0) -- (7,1) -- (8,-1) -- (9,0) -- (10,-2) -- (11,-1) -- (12,0) -- (13,1);
\draw [line width=0.5] (-2,0) -- (-2,-0.12);
\draw [line width=0.5] (-1,0) -- (-1,-0.12);
\draw [line width=0.5] (0,0) -- (0,-0.12);
\draw [line width=0.5] (1,0) -- (1,-0.12);
\draw [line width=0.5] (2,0) -- (2,-0.12);
\draw [line width=0.5] (3,0) -- (3,-0.12);
\draw [line width=0.5] (4,0) -- (4,-0.12);
\draw [line width=0.5] (5,0) -- (5,-0.12);
\draw [line width=0.5] (6,0) -- (6,-0.12);
\draw [line width=0.5] (7,0) -- (7,-0.12);
\draw [line width=0.5] (8,0) -- (8,-0.12);
\draw [line width=0.5] (9,0) -- (9,-0.12);
\draw [line width=0.5] (10,0) -- (10,-0.12);
\draw [line width=0.5] (11,0) -- (11,-0.12);
\draw [line width=0.5] (12,0) -- (12,-0.12);
\draw [line width=0.5] (13,0) -- (13,-0.12);
\draw [line width=0.5] (-3,1) -- (-3.12,1);
\draw [line width=0.5] (-3,2) -- (-3.12,2);
\draw [line width=0.5] (-3,0) -- (-3.12,0);
\draw [line width=0.5] (-3,-1) -- (-3.12,-1);
\draw [line width=0.5] (-3,-2) -- (-3.12,-2);
\draw [dotted] (-3,1) -- (13.5,1);
\draw [dotted] (-3,2) -- (13.5,2);
\draw [dotted] (-3,-1) -- (13.5,-1);
\draw [dotted] (-3,-2) -- (13.5,-2);
\draw [dotted] (-2,-2.5) -- (-2,2.5);
\draw [dotted] (-1,-2.5) -- (-1,2.5);
\draw [dotted] (0,-2.5) -- (0,2.5);
\draw [dotted] (1,-2.5) -- (1,2.5);
\draw [dotted] (2,-2.5) -- (2,2.5);
\draw [dotted] (3,-2.5) -- (3,2.5);
\draw [dotted] (4,-2.5) -- (4,2.5);
\draw [dotted] (5,-2.5) -- (5,2.5);
\draw [dotted] (6,-2.5) -- (6,2.5);
\draw [dotted] (7,-2.5) -- (7,2.5);
\draw [dotted] (8,-2.5) -- (8,2.5);
\draw [dotted] (9,-2.5) -- (9,2.5);
\draw [dotted] (10,-2.5) -- (10,2.5);
\draw [dotted] (11,-2.5) -- (11,2.5);
\draw [dotted] (12,-2.5) -- (12,2.5);
\draw [dotted] (13,-2.5) -- (13,2.5);
\draw (-2,-0.1) node[below, scale=0.8]{$1$};
\draw (-1,-0.1) node[below, scale=0.8]{$2$};
\draw (0,-0.1) node[below, scale=0.8]{$3$};
\draw (1,-0.1) node[below, scale=0.8]{$4$};
\draw (2,-0.1) node[below, scale=0.8]{$5$};
\draw (3,-0.1) node[below, scale=0.8]{$6$};
\draw (4,-0.1) node[below, scale=0.8]{$7$};
\draw (5,-0.1) node[below, scale=0.8]{$8$};
\draw (6,-0.1) node[below, scale=0.8]{$9$};
\draw (7,-0.1) node[below, scale=0.8]{$10$};
\draw (8,-0.1) node[below, scale=0.8]{$11$};
\draw (9,-0.1) node[below, scale=0.8]{$12$};
\draw (10,-0.1) node[below, scale=0.8]{$13$};
\draw (11,-0.1) node[below, scale=0.8]{$14$};
\draw (12,-0.1) node[below, scale=0.8]{$15$};
\draw (13,-0.1) node[below, scale=0.8]{$16$};
\draw (-3.1,1) node[left, scale=0.8]{$1$};
\draw (-3.1,2) node[left, scale=0.8]{$2$};
\draw (-3.1,0) node[left, scale=0.8]{$0$};
\draw (-3.1,-1) node[left, scale=0.8]{$-1$};
\draw (-3.1,-2) node[left, scale=0.8]{$-2$};
\end{tikzpicture}
\end{center}
\caption{Example, in the case $p=2$, of a path in the collection $\mathcal{R}_{[16,1]}$ with weight $a_{-2}^2\, a_{-1}^2\, a_{0}$.}
\label{genDyckpath}
\end{figure}
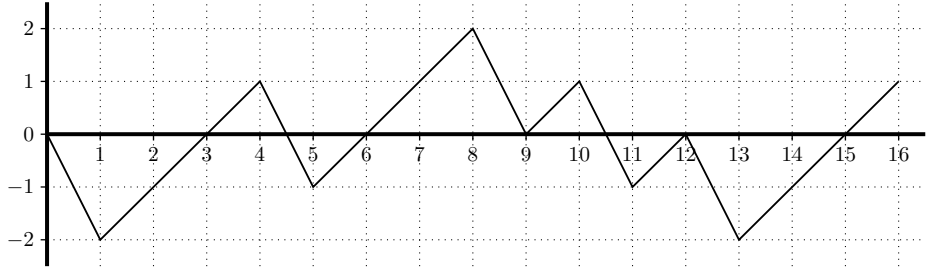

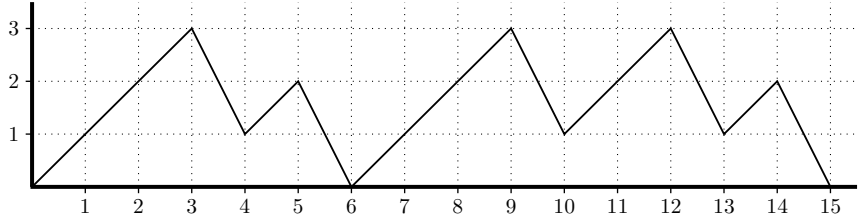
\begin{figure}
\begin{center}
\begin{tikzpicture}[scale=0.7]
\draw[line width=1.5pt]  (-3.03,0) -- (12.5,0);
\draw[line width=1.5pt]  (-3,0) -- (-3,3.5);
\draw[line width=0.7pt] (-3,0) -- (-2,1) -- (-1,2) -- (0,3) -- (1,1) -- (2,2) -- (3,0) -- (4,1) -- (5,2) -- (6,3) -- (7,1) -- (8,2) -- (9,3) -- (10,1) -- (11,2) -- (12,0);
\draw [line width=0.5] (-2,0) -- (-2,-0.12);
\draw [line width=0.5] (-1,0) -- (-1,-0.12);
\draw [line width=0.5] (0,0) -- (0,-0.12);
\draw [line width=0.5] (1,0) -- (1,-0.12);
\draw [line width=0.5] (2,0) -- (2,-0.12);
\draw [line width=0.5] (3,0) -- (3,-0.12);
\draw [line width=0.5] (4,0) -- (4,-0.12);
\draw [line width=0.5] (5,0) -- (5,-0.12);
\draw [line width=0.5] (6,0) -- (6,-0.12);
\draw [line width=0.5] (7,0) -- (7,-0.12);
\draw [line width=0.5] (8,0) -- (8,-0.12);
\draw [line width=0.5] (9,0) -- (9,-0.12);
\draw [line width=0.5] (10,0) -- (10,-0.12);
\draw [line width=0.5] (11,0) -- (11,-0.12);
\draw [line width=0.5] (12,0) -- (12,-0.12);
\draw [line width=0.5] (-3,1) -- (-3.12,1);
\draw [line width=0.5] (-3,2) -- (-3.12,2);
\draw [line width=0.5] (-3,3) -- (-3.12,3);
\draw [dotted] (-3,1) -- (12.5,1);
\draw [dotted] (-3,2) -- (12.5,2);
\draw [dotted] (-3,3) -- (12.5,3);
\draw [dotted] (-2,0) -- (-2,3.5);
\draw [dotted] (-1,0) -- (-1,3.5);
\draw [dotted] (0,0) -- (0,3.5);
\draw [dotted] (1,0) -- (1,3.5);
\draw [dotted] (2,0) -- (2,3.5);
\draw [dotted] (3,0) -- (3,3.5);
\draw [dotted] (4,0) -- (4,3.5);
\draw [dotted] (5,0) -- (5,3.5);
\draw [dotted] (6,0) -- (6,3.5);
\draw [dotted] (7,0) -- (7,3.5);
\draw [dotted] (8,0) -- (8,3.5);
\draw [dotted] (9,0) -- (9,3.5);
\draw [dotted] (10,0) -- (10,3.5);
\draw [dotted] (11,0) -- (11,3.5);
\draw [dotted] (12,0) -- (12,3.5);
\draw (-2,-0.1) node[below, scale=0.8]{$1$};
\draw (-1,-0.1) node[below, scale=0.8]{$2$};
\draw (0,-0.1) node[below, scale=0.8]{$3$};
\draw (1,-0.1) node[below, scale=0.8]{$4$};
\draw (2,-0.1) node[below, scale=0.8]{$5$};
\draw (3,-0.1) node[below, scale=0.8]{$6$};
\draw (4,-0.1) node[below, scale=0.8]{$7$};
\draw (5,-0.1) node[below, scale=0.8]{$8$};
\draw (6,-0.1) node[below, scale=0.8]{$9$};
\draw (7,-0.1) node[below, scale=0.8]{$10$};
\draw (8,-0.1) node[below, scale=0.8]{$11$};
\draw (9,-0.1) node[below, scale=0.8]{$12$};
\draw (10,-0.1) node[below, scale=0.8]{$13$};
\draw (11,-0.1) node[below, scale=0.8]{$14$};
\draw (12,-0.1) node[below, scale=0.8]{$15$};
\draw (-3.1,1) node[left, scale=0.8]{$1$};
\draw (-3.1,2) node[left, scale=0.8]{$2$};
\draw (-3.1,3) node[left, scale=0.8]{$3$};
\end{tikzpicture}
\end{center}
\caption{Example, in the case $p=2$, of a path in the collection $\mathcal{S}_{[15,0]}$ with weight $a_{0}^2\, a_{1}^{3}$.}
\label{Dyckpath}
\end{figure}

\begin{figure}
\begin{center}
\begin{tikzpicture}[scale=0.7]
\draw[line width=1.5pt]  (-3.03,0) -- (12.5,0);
\draw[line width=1.5pt]  (-3,0) -- (-3,-3.5);
\draw[line width=0.7pt] (-3,0) -- (-2,-2) -- (-1,-1) -- (0,-3) -- (1,-2) -- (2,-1) -- (3,-3) -- (4,-2) -- (5,-1) -- (6,0) -- (7,-2) -- (8,-1) -- (9,-3) -- (10,-2) -- (11,-1) -- (12,0);
\draw [line width=0.5] (-2,0) -- (-2,0.12);
\draw [line width=0.5] (-1,0) -- (-1,0.12);
\draw [line width=0.5] (0,0) -- (0,0.12);
\draw [line width=0.5] (1,0) -- (1,0.12);
\draw [line width=0.5] (2,0) -- (2,0.12);
\draw [line width=0.5] (3,0) -- (3,0.12);
\draw [line width=0.5] (4,0) -- (4,0.12);
\draw [line width=0.5] (5,0) -- (5,0.12);
\draw [line width=0.5] (6,0) -- (6,0.12);
\draw [line width=0.5] (7,0) -- (7,0.12);
\draw [line width=0.5] (8,0) -- (8,0.12);
\draw [line width=0.5] (9,0) -- (9,0.12);
\draw [line width=0.5] (10,0) -- (10,0.12);
\draw [line width=0.5] (11,0) -- (11,0.12);
\draw [line width=0.5] (12,0) -- (12,0.12);
\draw [line width=0.5] (-3,-1) -- (-3.12,-1);
\draw [line width=0.5] (-3,-2) -- (-3.12,-2);
\draw [line width=0.5] (-3,-3) -- (-3.12,-3);
\draw [dotted] (-3,-1) -- (12.5,-1);
\draw [dotted] (-3,-2) -- (12.5,-2);
\draw [dotted] (-3,-3) -- (12.5,-3);
\draw [dotted] (-2,0) -- (-2,-3.5);
\draw [dotted] (-1,0) -- (-1,-3.5);
\draw [dotted] (0,0) -- (0,-3.5);
\draw [dotted] (1,0) -- (1,-3.5);
\draw [dotted] (2,0) -- (2,-3.5);
\draw [dotted] (3,0) -- (3,-3.5);
\draw [dotted] (4,0) -- (4,-3.5);
\draw [dotted] (5,0) -- (5,-3.5);
\draw [dotted] (6,0) -- (6,-3.5);
\draw [dotted] (7,0) -- (7,-3.5);
\draw [dotted] (8,0) -- (8,-3.5);
\draw [dotted] (9,0) -- (9,-3.5);
\draw [dotted] (10,0) -- (10,-3.5);
\draw [dotted] (11,0) -- (11,-3.5);
\draw [dotted] (12,0) -- (12,-3.5);
\draw (-2,0.1) node[above, scale=0.8]{$1$};
\draw (-1,0.1) node[above, scale=0.8]{$2$};
\draw (0,0.1) node[above, scale=0.8]{$3$};
\draw (1,0.1) node[above, scale=0.8]{$4$};
\draw (2,0.1) node[above, scale=0.8]{$5$};
\draw (3,0.1) node[above, scale=0.8]{$6$};
\draw (4,0.1) node[above, scale=0.8]{$7$};
\draw (5,0.1) node[above, scale=0.8]{$8$};
\draw (6,0.1) node[above, scale=0.8]{$9$};
\draw (7,0.1) node[above, scale=0.8]{$10$};
\draw (8,0.1) node[above, scale=0.8]{$11$};
\draw (9,0.1) node[above, scale=0.8]{$12$};
\draw (10,0.1) node[above, scale=0.8]{$13$};
\draw (11,0.1) node[above, scale=0.8]{$14$};
\draw (12,0.1) node[above, scale=0.8]{$15$};
\draw (-3.1,-1) node[left, scale=0.8]{$-1$};
\draw (-3.1,-2) node[left, scale=0.8]{$-2$};
\draw (-3.1,-3) node[left, scale=0.8]{$-3$};
\end{tikzpicture}
\end{center}
\caption{Example, in the case $p=2$, of a path in the collection $\widehat{\mathcal{S}}_{[15,0]}$ with weight $a_{-2}^{2}\,a_{-3}^{3}$. If $\gamma$ denotes the path in Figure~\ref{Dyckpath}, its reflection $\widehat{\gamma}$ is shown here, constructed as indicated in Proposition~\ref{prop:elemprop}.$iv)$.}
\label{reflectedDyckpath}
\end{figure}
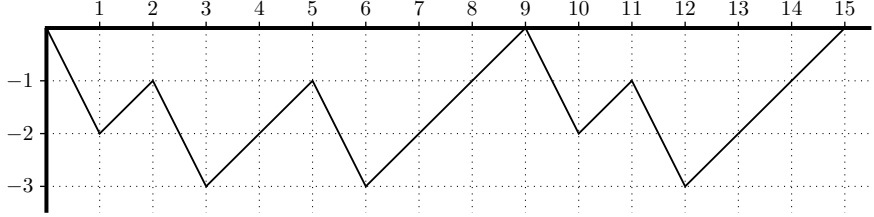

\bigskip

\noindent\textbf{Proof of Theorem~\ref{theo:weightpoly}:} Let us define the collection of multi-indices
\[
\mathcal{I}_{[m,j]}:=\{(i_{1},\ldots,i_{m})\in\mathbb{Z}_{\geq 0}^{m} : 0\leq i_{1}\leq j,\,\,\mbox{and}\,\,0\leq i_{k}\leq i_{k-1}+p\,\,\mbox{for each}\,\,k=2,\ldots,m\}.
\] 
For a path $\gamma\in\mathcal{S}_{[m(p+1)+j,j]}$, let $d_{k}=d_{k}(\gamma)$, $k=1,\ldots,m$, denote the $k$th downstep in $\gamma$, \emph{counting from right to left} (recall that a path in $\mathcal{S}_{[m(p+1)+j,j]}$ has $m$ downsteps). We consider the map $\eta$ defined on $\mathcal{S}_{[m(p+1)+j,j]}$ by $\eta(\gamma)=(i_{1},\ldots,i_{m})\in\mathbb{Z}_{\geq 0}^{m}$, where $a_{i_{k}}=w(d_{k})$, $k=1,\ldots,m,$ is the weight of  
the downstep $d_{k}$ in $\gamma$ (in this proof we see the $a_{n}$'s as variables rather than complex numbers). 

Let us show that $\eta(\gamma)\in\mathcal{I}_{[m,j]}$ for every $\gamma\in\mathcal{S}_{[m(p+1)+j,j]}$, and that $\eta: \mathcal{S}_{[m(p+1)+j,j]}\longrightarrow \mathcal{I}_{[m,j]}$ is a bijection. This will immediately imply \eqref{eq:descSnj}. 

First, recall that a path $\gamma\in\mathcal{S}_{[m(p+1)+j,j]}$ ends at the point $(m(p+1)+j,j)$. Therefore, the last downstep $d_{1}$ in $\gamma$ is clearly one with weight $w(d_{1})=a_{i_{1}}$, $0\leq i_{1}\leq j$. For a fixed $1\leq k\leq m-1$, if the downstep $d_{k}$ in $\gamma$ has weight $a_{i_{k}}$, $i_{k}\geq 0$, then by the geometry of the path the following downstep $d_{k+1}$ to the left of $d_{k}$ has necessarily weight $a_{i_{k+1}}$ with $0\leq i_{k+1}\leq i_{k}+p$. This justifies that $\eta(\gamma)\in\mathcal{I}_{[m,j]}$.

Assume that $\eta(\gamma_{1})=\eta(\gamma_{2})$, let $\eta(\gamma_{r})=(i_{1}^{(r)},\ldots,i_{m}^{(r)})$, and let $d_{k}^{(r)}$ be the $k$th downstep in $\gamma_{r}$ as defined above, $r=1, 2$. The equality $i_{1}^{(1)}=i_{1}^{(2)}$ implies that the last downsteps $d_{1}^{(1)}$ and $d_{1}^{(2)}$ occupy the same positions in $\gamma_{1}$ and $\gamma_{2}$, respectively. From $i_{2}^{(1)}=i_{2}^{(2)}$ it then follows that the downsteps $d_{2}^{(1)}$ and $d_{2}^{(2)}$ occupy the same positions, and so on. The equality $\gamma_{1}=\gamma_{2}$ follows easily from an induction argument.

To justify that $\eta$ is onto, consider $(i_{1},\ldots,i_{m})\in\mathcal{I}_{[m,j]}$. Let $\gamma$ be the path constructed inductively as follows, \emph{from right to left}. Let the last $j-i_{1}$ steps in $\gamma$ be consecutive upsteps connecting the points $(m(p+1)+i_{1},i_{1})$ and $(m(p+1)+j,j)$, and let the last downstep $d_{1}$ in $\gamma$ be the downstep with terminal height $i_{1}$ preceding these $j-i_{1}$ upsteps. Assume we have defined the steps in $\gamma$ from right to left up to the position of the downstep $d_{\ell}$, $1\leq \ell\leq m-1$, in such a way that $w(d_{k})=a_{i_{k}}$ for each $1\leq k\leq \ell$, and the number of upsteps in $\gamma$ located after the downstep $d_{\ell}$ is 
\[
(j-i_{1})+(i_{1}+p-i_{2})+(i_{2}+p-i_{3})+\cdots+(i_{\ell-1}+p-i_{\ell})=j+(\ell-1)p-i_{\ell}.
\]
Note that each term in parenthesis is non-negative. Then let $d_{\ell+1}$ be the downstep in $\gamma$ that precedes $d_{\ell}$ with $i_{\ell}+p-i_{\ell+1}$ intermediate upsteps between $d_{\ell}$ and $d_{\ell+1}$. This is possible since $i_{\ell+1}\leq i_{\ell}+p$. Then $w(d_{\ell+1})=a_{i_{\ell+1}}$, and the number of upsteps in $\gamma$ located after $d_{\ell+1}$ is $(j+(\ell-1)p-i_{\ell})+(i_{\ell}+p-i_{\ell+1})=j+\ell p-i_{\ell+1}$. We can continue this process and define $\gamma$ up to the position of the first downstep $d_{m}$, so that the number of upsteps located after $d_{m}$ is $j+(m-1)p-i_{m}$, and $d_{m}$ has terminal height $i_{m}$. Finally, we complete the construction of $\gamma$ defining the first $i_{m}+p$ steps of $\gamma$ that connect $(0,0)$ with $(i_{m}+p,i_{m}+p)$ to be upsteps. This is possible since $(i_{m}+p)+(j+(m-1)p-i_{m})=mp+j$. Then, $\gamma$ is obviously a path in $\mathcal{S}_{[m(p+1)+j,j]}$ and $\eta(\gamma)=(i_{1},\ldots,i_{m})$. This concludes the proof that $\eta$ is a bijection. 

The proof of \eqref{eq:descRnj} goes along the same lines. In this case we consider the collection
\[
\widetilde{\mathcal{I}}_{[m,j]}:=\{(i_{1},\ldots,i_{m})\in\mathbb{Z}^{m}:-p\leq i_{1}\leq (m-1)p+j\,\,\mbox{and}\,\,i_{k-1}-p\leq i_{k}\leq (m-k)p+j,\,\,2\leq k\leq m\}.
\]
Given a path $\gamma\in\mathcal{R}_{[m(p+1)+j,j]}$, let $\widetilde{d}_{k}=\widetilde{d}_{k}(\gamma)$ denote the $k$th downstep in $\gamma$, now \emph{counting from left to right}, and let $\widetilde{\eta}$ be the map defined on $\mathcal{R}_{[m(p+1)+j,j]}$ by $\widetilde{\eta}(\gamma)=(i_{1},\ldots,i_{m})$, where $a_{i_{k}}=w(\widetilde{d}_{k})$ for each $k=1,\ldots,m$. We leave to the reader the task of checking that $\widetilde{\eta}:\mathcal{R}_{[m(p+1)+j,j]}\longrightarrow\widetilde{\mathcal{I}}_{[m,j]}$ is a bijection, which immediately implies \eqref{eq:descRnj}.

The identity \eqref{eq:descTnj} follows immediately from \eqref{eq:descSnj} and Proposition~\ref{prop:elemprop}.$iv)$.\qed

\bigskip

From \eqref{eq:descSnj} and \eqref{def:Sqnj}--\eqref{def:Tqnj} we also obtain that for any integer $q\geq 0$,
\begin{align}
S_{[m(p+1)+j,j]}^{(q)} & =\sum_{i_{1}=q}^{j+q}\,\,\sum_{i_{2}=q}^{i_{1}+p}\,\,\sum_{i_{3}=q}^{i_{2}+p}\,\,\cdots\,\,\sum_{i_{m}=q}^{i_{m-1}+p}\,\,\prod_{k=1}^{m}a_{i_{k}},\label{eq:descSnjq}\\
T_{[m(p+1)+j,j]}^{(q)} & =\sum_{i_{1}=-j-p-q}^{-p-q}\,\,\sum_{i_{2}=i_{1}-p}^{-p-q}\,\,\sum_{i_{3}=i_{2}-p}^{-p-q}\,\,\cdots\,\,\sum_{i_{m}=i_{m-1}-p}^{-p-q}\,\,\prod_{k=1}^{m}a_{i_{k}}.\notag
\end{align}

The following result is due to Aptekarev, Kaliaguine, and Van Iseghem \cite[Corollary 1]{AptKalVan}, who proved it using exclusively the identities \eqref{eq:descSnj} and \eqref{eq:descSnjq}. We present the result as an immediate consequence of the lattice path representations of the polynomials $S_{[n,j]}$ and $S_{[n,j]}^{(q)}$, and of Theorem~\ref{theo:relseriesAs}. 

\begin{corollary}[Aptekarev, Kaliaguine, Van Iseghem \cite{AptKalVan}]\label{cor:Aseries}
The following relations hold between the series defined in \eqref{fsSj} and \eqref{fsSjq}:
\begin{align}
S_{0}(z) & =\frac{1}{z-a_{0}\,S_{p-1}^{(1)}(z)}\label{eq:S0Sp}\\
S_{j}(z) & =S_{0}(z)\,S^{(1)}_{j-1}(z)\qquad 1\leq j\leq p.\label{sr:eq:SjS0Sjm}
\end{align}
We also have
\begin{align}
z S_{0}(z)-1 & =a_{0}\,S_{p}(z)\label{eq:S0Si:1}\\
S_{j}(z) & =S_{i}(z)\,S_{j-i-1}^{(i+1)}(z)\qquad 0\leq i<j\leq p.\label{sr:eq:S0Si:2}
\end{align}
\end{corollary}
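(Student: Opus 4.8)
The plan is to deduce all four relations by specializing Theorem~\ref{theo:relseriesAs} and formulas \eqref{eq:relAs}--\eqref{eq:A0Ai:2} to the bi-diagonal case $a_{n}^{(k)}=0$ for $0\leq k\leq p-1$ and $a_{n}^{(p)}=a_{n}$. The key observation is that the families $\mathcal{S}_{[n,j]}$ and $\mathcal{S}_{[n,j]}^{(q)}$, together with their weight polynomials, are precisely the specializations of the general Lukasiewicz families $\mathcal{D}_{[n,j]}$ and $\mathcal{D}_{[n,j]}^{(q)}$ obtained under this choice of parameters. Indeed, with these values the weight of every level step and of every downstep of size $1\leq j\leq p-1$ vanishes, so the only paths in $\mathcal{D}_{[n,j]}$ carrying nonzero weight are those built exclusively from upsteps and $p$-downsteps; since every path in $\mathcal{D}_{[n,j]}$ already satisfies $\min(\gamma)=0$, these surviving paths are exactly the elements of $\mathcal{S}_{[n,j]}$, and the identical argument handles the upward-shifted families.

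First I would record the resulting identifications of weight polynomials, namely $A_{[n,j]}=S_{[n,j]}$ and $A_{[n,j]}^{(q)}=S_{[n,j]}^{(q)}$ in the bi-diagonal case, which at the level of generating functions read $A_{j}(z)=S_{j}(z)$ and $A_{j}^{(q)}(z)=S_{j}^{(q)}(z)$. Because the relations of Theorem~\ref{theo:relseriesAs} and \eqref{eq:relAs}--\eqref{eq:A0Ai:2} hold for \emph{arbitrary} values of the parameters $a_{n}^{(k)}$, they hold in particular for this specific choice, and under it the series $A_{j}(z)$, $A_{j}^{(q)}(z)$ may simply be replaced by $S_{j}(z)$, $S_{j}^{(q)}(z)$.

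The remaining step is a direct substitution. In \eqref{eq:A0Aks} the term $a_{0}^{(0)}$ vanishes and the sum $\sum_{j=1}^{p}a_{0}^{(j)}A_{j-1}^{(1)}(z)$ collapses to its single surviving summand $a_{0}A_{p-1}^{(1)}(z)=a_{0}S_{p-1}^{(1)}(z)$, yielding \eqref{eq:S0Sp}; relation \eqref{eq:AjA0Ajm} becomes \eqref{sr:eq:SjS0Sjm}; the moment identity \eqref{eq:relAs} reduces to its single nonzero term $a_{0}^{(p)}S_{p}(z)=a_{0}S_{p}(z)$, giving \eqref{eq:S0Si:1}; and \eqref{eq:A0Ai:2} becomes \eqref{sr:eq:S0Si:2}. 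This is exactly the route indicated in the text: the corollary follows at once from the lattice-path representations of $S_{[n,j]}$ and $S_{[n,j]}^{(q)}$ together with Theorem~\ref{theo:relseriesAs}.

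I do not expect a serious obstacle here. The only two points requiring a word of care are the combinatorial identification $A_{[n,j]}=S_{[n,j]}$, which rests on the inclusion $\mathcal{S}_{[n,j]}\subseteq\mathcal{D}_{[n,j]}$ and the vanishing of the weights of all other steps, and the well-definedness of the reciprocal appearing in \eqref{eq:A0Aks} and \eqref{eq:S0Sp}. The latter is automatic: since $S_{0}(z)=z^{-1}+O(z^{-2})$ is a nonzero element of $\mathbb{C}((z^{-1}))$, it is invertible there, exactly as $A_{0}(z)$ is in the general setting.
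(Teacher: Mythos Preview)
Your proposal is correct and follows essentially the same route as the paper: specialize to the bi-diagonal choice $a_{n}^{(k)}=0$ for $0\leq k\leq p-1$, $a_{n}^{(p)}=a_{n}$, observe that under this specialization $A_{[n,j]}=S_{[n,j]}$ and $A_{[n,j]}^{(q)}=S_{[n,j]}^{(q)}$ (hence $A_{j}(z)=S_{j}(z)$, $A_{j}^{(q)}(z)=S_{j}^{(q)}(z)$), and then read off \eqref{eq:S0Sp}--\eqref{sr:eq:S0Si:2} directly from \eqref{eq:A0Aks}--\eqref{eq:AjA0Ajm} and \eqref{eq:relAs}--\eqref{eq:A0Ai:2}. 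The paper's proof is identical in substance, only more terse.
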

\begin{proof}
The collections $\mathcal{S}_{[n,j]}$ and $\widehat{\mathcal{S}}_{[n,j]}^{(q)}$ are subcollections of the collections $\mathcal{D}_{[n,j]}$ and $\widehat{\mathcal{D}}_{[n,j]}^{(q)}$. The weights of the paths in $\mathcal{D}_{[n,j]}$ and $\widehat{\mathcal{D}}_{[n,j]}^{(q)}$ are expressed in terms of the given sequences of complex numbers $(a_{n}^{(k)})_{n\in\mathbb{Z}}$, $0\leq k\leq p$. If we now take
\begin{equation}\label{eq:redank}
\begin{aligned}
a_{n}^{(k)} & =0,\qquad 0\leq k\leq p-1,\quad n\in\mathbb{Z},\\
a_{n}^{(p)} & =a_{n},\qquad n\in\mathbb{Z},
\end{aligned}
\end{equation}
(cf. \eqref{eq:ananp}) then the polynomials $A_{[n,j]}$ and $A_{[n,j]}^{(q)}$ reduce to the polynomials $S_{[n,j]}$ and $S_{[n,j]}^{(q)}$, respectively. Therefore,
\[
A_{j}(z)=S_{j}(z),\qquad A_{j}^{(q)}(z)=S_{j}^{(q)}(z),\qquad \mbox{provided \eqref{eq:redank} holds}.
\]
So the relations \eqref{eq:S0Sp}--\eqref{sr:eq:S0Si:2} follow from \eqref{eq:A0Aks}--\eqref{eq:A0Ai:2}.
\end{proof}

\begin{remark}\label{rem:card}
Let $m\geq 0$, $0\leq j\leq p$. With the help of the relations in Corollary~\ref{cor:Aseries}, we can justify the identity
\begin{equation}\label{eq:proofcardDnj}
\mathrm{card}(\mathcal{S}_{[m(p+1)+j,j]})=\frac{j+1}{pm+j+1}\binom{m(p+1)+j}{m}.
\end{equation}
Indeed, let $C_{[m,j]}:=\mathrm{card}(\mathcal{S}_{[m(p+1)+j,j]})$. If we take in \eqref{eq:ananp} the weights $a_{k}=1$ for all $k\geq 0$, then each path in the collections $\mathcal{S}_{[m(p+1)+j,j]}$ and $\mathcal{S}_{[m(p+1)+j,j]}^{(q)}$, $q\geq 1$, has weight $1$, hence in this case we have $S_{[m(p+1)+j,j]}=S_{[m(p+1)+j,j]}^{(q)}=C_{[m,j]}$. Therefore,
\begin{equation}\label{eq:newformAj}
S_{j}(z)=S_{j}^{(q)}(z)=\sum_{m=0}^{\infty}\frac{C_{[m,j]}}{z^{m(p+1)+j+1}},\qquad \mbox{for all}\,\,\,\,0\leq j\leq p,\,\,\,q\geq 1.
\end{equation}
This implies, in virtue of \eqref{eq:S0Sp} and \eqref{sr:eq:SjS0Sjm}, the relations
\begin{align}
S_{0}(z) & =\frac{1}{z-S_{0}^{p}(z)},\label{eq:relA0A0id}\\
S_{j}(z) & =S_{0}(z)\,S_{j-1}(z),\qquad 1\leq j\leq p,\label{eq:relAjA0id}
\end{align}
and from \eqref{eq:relAjA0id} we obtain
\begin{equation}\label{eq:relSjS0jp1}
S_{j}(z)=S_{0}(z)^{j+1},\qquad 1\leq j\leq p.
\end{equation}
Consider the function
\[
h(w):=S_{0}\left(\frac{1}{w}\right)=\sum_{m=0}^{\infty}C_{[m,0]}w^{m(p+1)+1}.
\]
Using the estimate $C_{[m,0]}=\mathrm{card}(\mathcal{S}_{[m(p+1),0]})\leq \mathrm{card}(\mathcal{R}_{[m(p+1),0]})=\binom{m(p+1)}{m}$, we observe that the power series defining $h$ has positive radius of convergence, so $h$ is analytic in a neighborhood of the origin. The relation \eqref{eq:relA0A0id} implies that $h$ satisfies
\[
w=\frac{h(w)}{h(w)^{p+1}+1}.
\]
Therefore $h$ is the inverse of the function $f(t)=\frac{t}{\phi(t)}$, where $\phi(t)=t^{p+1}+1$.  

Given a power series $g(z)=\sum_{k=0}^{\infty} c_{k} z^{k}$, we use now the traditional notation $c_{k}=[z^k]\,g(z)$. The Lagrange inversion formula (cf. \cite{Gessel}) applied to $h$ and $f$ asserts that
\[
[w^{n}]\,h(w)=\frac{1}{n}\,[t^{n-1}]\,\phi(t)^{n},\qquad n\geq 1.
\]
Applying this identity with $n=m(p+1)+1$, we get
\begin{align*}
C_{[m,0]} & =[w^{m(p+1)+1}]\,h(w)\\
& =\frac{1}{m(p+1)+1}\,[t^{m(p+1)}]\,(t^{p+1}+1)^{m(p+1)+1}\\
 & =\frac{1}{m(p+1)+1}\binom{m(p+1)+1}{m}\\ & =\frac{1}{mp+1}\binom{m(p+1)}{m}.
\end{align*}
This justifies \eqref{eq:proofcardDnj} for $j=0$. 

A more general version of the Lagrange inversion formula is
\begin{equation}\label{eq:Laginv}
[w^{n}]\,h(w)^{k}=\frac{k}{n}\,[t^{n-k}]\,\phi(t)^{n},\qquad n,k\geq 1.
\end{equation}
According to \eqref{eq:relSjS0jp1}, we have 
\[
S_{j}\left(\frac{1}{w}\right)=\sum_{m=0}^{\infty} C_{[m,j]} w^{m(p+1)+j+1}=h(w)^{j+1},
\] 
so applying \eqref{eq:Laginv} with $n=m(p+1)+j+1$ and $k=j+1$, we obtain
\begin{align*}
C_{[m,j]} & =[w^{m(p+1)+j+1}]\,h(w)^{j+1}\\
& =\frac{j+1}{m(p+1)+j+1}\,[t^{m(p+1)}]\,(t^{p+1}+1)^{m(p+1)+j+1}\\
 & =\frac{j+1}{m(p+1)+j+1}\binom{m(p+1)+j+1}{m}\\ & =\frac{j+1}{mp+j+1}\binom{m(p+1)+j}{m}
\end{align*}
so \eqref{eq:proofcardDnj} is justified.

The numbers in \eqref{eq:proofcardDnj} are known as the \emph{Fuss-Catalan numbers}. 
\end{remark}

\begin{corollary}
The following relations hold between the series defined in \eqref{fsRj}, \eqref{fsSjq}, and \eqref{fsTjq}:
\begin{align}
R_{0}(z) &=\frac{1}{z-\sum_{\ell=0}^{p}a_{-\ell}\,S_{p-\ell-1}^{(1)}(z)\,T_{\ell-1}^{(1)}(z)} \label{sr:eq:W0expansion}\\
R_{j}(z) & =R_{0}(z)\,S_{j-1}^{(1)}(z)\qquad 1\leq j\leq p.\label{sr:eq:WjW0Aj}
\end{align}
In \eqref{sr:eq:W0expansion}, we understand that $S^{(1)}_{-1}(z)\equiv T_{-1}^{(1)}(z)\equiv 1$. More generally,
\begin{equation}\label{sr:eq:WjWiAj}
R_{j}(z)=R_{i}(z)\,S_{j-i-1}^{(i+1)}(z) \qquad 0\leq i<j\leq p.
\end{equation}
\end{corollary}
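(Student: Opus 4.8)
The plan is to obtain all three relations as specializations of Theorem~\ref{theo:relseriesWs}, following verbatim the reduction employed in the proof of Corollary~\ref{cor:Aseries}. Concretely, I would impose the weight restriction \eqref{eq:redank}, namely $a_n^{(k)}=0$ for $0\le k\le p-1$ and $a_n^{(p)}=a_n$ for all $n\in\mathbb{Z}$, and track how the weight polynomials appearing in Theorem~\ref{theo:relseriesWs} collapse under this choice.

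The first step is to check the three reductions of weight polynomials
\[
W_{[n,j]}\longrightarrow R_{[n,j]},\qquad A_{[n,j]}^{(q)}\longrightarrow S_{[n,j]}^{(q)},\qquad B_{[n,j]}^{(q)}\longrightarrow T_{[n,j]}^{(q)}.
\]
The reduction $A_{[n,j]}^{(q)}\to S_{[n,j]}^{(q)}$ is already recorded in the proof of Corollary~\ref{cor:Aseries}. The remaining two follow by the same reasoning: the collections $\mathcal{P}_{[n,j]}$ and $\widehat{\mathcal{D}}_{[n,j]}$ consist of paths whose steps all lie in $\mathcal{E}$ (for $\widehat{\mathcal{D}}_{[n,j]}$ this uses the remark after \eqref{eq:defDnjqtilde} that the reflection preserves the step type), and under \eqref{eq:redank} every level step and every downstep by fewer than $p$ units acquires weight zero. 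Hence in each of $W_{[n,j]}=\sum_{\gamma\in\mathcal{P}_{[n,j]}}w(\gamma)$ and $B_{[n,j]}^{(q)}$ only the paths using exclusively upsteps and downsteps by $p$ --- that is, the paths in $\mathcal{R}_{[n,j]}$ and $\widehat{\mathcal{S}}_{[n,j]}^{(q)}$ respectively --- survive with nonzero weight. Passing to the generating series of \eqref{eq:def:seriesWj}--\eqref{eq:def:seriesBjq} and \eqref{fsRj}--\eqref{fsTjq} then gives $W_j(z)\to R_j(z)$, $A_j^{(q)}(z)\to S_j^{(q)}(z)$, and $B_j^{(q)}(z)\to T_j^{(q)}(z)$, with the convention $A_{-1}^{(1)}\equiv B_{-1}^{(1)}\equiv 1$ becoming $S_{-1}^{(1)}\equiv T_{-1}^{(1)}\equiv 1$.

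With these identifications in hand, the second step is to substitute directly into \eqref{eq:W0expansion}, \eqref{eq:WjW0Aj}, and \eqref{eq:WjWiAj}. Because \eqref{eq:redank} forces $a_{0}^{(0)}=0$ and $a_{-k}^{(j)}=0$ whenever $j\le p-1$, the double sum in \eqref{eq:W0expansion} collapses to its single surviving inner block $j=p$, namely $\sum_{k=0}^{p}a_{-k}\,S_{p-k-1}^{(1)}(z)\,T_{k-1}^{(1)}(z)$; relabeling the index $k$ as $\ell$ yields \eqref{sr:eq:W0expansion}. The relations \eqref{eq:WjW0Aj} and \eqref{eq:WjWiAj} carry over termwise to \eqref{sr:eq:WjW0Aj} and \eqref{sr:eq:WjWiAj} with no further simplification.

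Since every ingredient is already proved, there is essentially no obstacle here; the only point that warrants care is the reduction $B_{[n,j]}^{(q)}\to T_{[n,j]}^{(q)}$, which hinges on confirming that the reflection sends each step into a step of the same type, so that the forbidden steps are exactly those killed by \eqref{eq:redank}. This is guaranteed by the transformation property noted after the definition of $\widehat{\mathcal{D}}_{[n,j]}$.
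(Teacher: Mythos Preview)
Your proposal is correct and follows exactly the approach of the paper: specialize the weights via \eqref{eq:redank}, observe that this collapses $W_{j}(z)$, $A_{j}^{(q)}(z)$, $B_{j}^{(q)}(z)$ to $R_{j}(z)$, $S_{j}^{(q)}(z)$, $T_{j}^{(q)}(z)$ respectively, and then read off \eqref{sr:eq:W0expansion}--\eqref{sr:eq:WjWiAj} directly from \eqref{eq:W0expansion}--\eqref{eq:WjWiAj}. The paper's own proof is in fact shorter than yours, simply asserting the three identifications and the consequence; your added detail on why the double sum in \eqref{eq:W0expansion} collapses to the single $j=p$ block, and on why the reflection argument guarantees the $B\to T$ reduction, is a welcome elaboration but not a different method.
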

\begin{proof}
As in the proof of Corollary~\ref{cor:Aseries}, if we define the sequences $(a_{n}^{(k)})_{n\in\mathbb{Z}}$, $0\leq k\leq p$, so that \eqref{eq:redank} holds, then for every $0\leq j\leq p$ and $q\geq 1$ we have the identifications
\[
W_{j}(z)=R_{j}(z),\qquad A_{j}^{(q)}(z)=S_{j}^{(q)}(z),\qquad B_{j}^{(q)}(z)=T_{j}^{(q)}(z),
\]
so the relations \eqref{sr:eq:W0expansion}--\eqref{sr:eq:WjWiAj} follow from \eqref{eq:W0expansion}--\eqref{eq:WjWiAj}.
\end{proof}

\section{Resolvent functions of banded Hessenberg operators and vector continued fractions}\label{sec:resolvcf}

In this section we show that the formal power series defined in \eqref{eq:def:seriesAj}--\eqref{eq:def:seriesBjq} can be identified as resolvent functions of certain banded Hessenberg operators. We also discuss the vector continued fraction expansion for the vector $(A_{0}(z),\ldots,A_{p-1}(z))$.

Again let $(a_{n}^{(k)})_{n\in\mathbb{Z}}$, $0\leq k\leq p$, be a collection of $p+1$ bi-infinite sequences of complex numbers. Let $\{e_{j}\}_{j=0}^{\infty}$ denote the standard basis vectors in the space $\ell^{2}(\mathbb{Z}_{\geq 0})$, where the inner product of two vectors $x=(x_{n})_{n=0}^{\infty}$ and $y=(y_{n})_{n=0}^{\infty}$ will be denoted $\langle x,y \rangle=\sum_{n=0}^{\infty}x_{n} \overline{y_{n}}$. Let $D_{0}$ denote the subspace consisting of all finite linear combinations of the basis vectors $e_{j}$. For an integer $q\geq 0$, let $\mathcal{H}_{q}$ be the operator (possibly unbounded) defined by 
\begin{equation}\label{def:opHcal}
\begin{cases}
\mathcal{H}_{q} e_{0}=\sum_{k=0}^{p} a_{q}^{(k)} e_{k},\\
\mathcal{H}_{q} e_{n}=e_{n-1}+\sum_{k=0}^{p} a_{n+q}^{(k)}\,e_{n+k},\quad n\geq 1,
\end{cases}
\end{equation}
and extended by linearity to $D_{0}$. Likewise, let $\mathcal{B}_{q}$ be the linear operator on $D_{0}$ defined by
\begin{equation}\label{def:opBcal}
\begin{cases}
\mathcal{B}_{q} e_{0} =\sum_{k=0}^{p} a_{-k-q}^{(k)}\, e_{k},\\
\mathcal{B}_{q} e_{n}=e_{n-1}+\sum_{k=0}^{p} a_{-k-q-n}^{(k)}\, e_{n+k},\quad n\geq 1.
\end{cases}
\end{equation}
In the basis $\{e_{j}\}_{j=0}^{\infty}$, the matrix representations of the operators $\mathcal{H}_{q}$ and $\mathcal{B}_{q}$ are the banded lower Hessenberg matrices
\begin{equation}\label{matrixrep}
\begin{pmatrix}
a_{q}^{(0)} & 1 & & & \\
\vdots & a_{q+1}^{(0)} & 1 & \\
a_{q}^{(p)} & \vdots & a_{q+2}^{(0)} & \ddots \\
 & a_{q+1}^{(p)} & \vdots & \ddots \\
 &  & a_{q+2}^{(p)} & \\
 & & & \ddots \\
 &   
\end{pmatrix}
\hspace{2cm}
\begingroup\BigColSep\begin{pmatrix}
a_{-q}^{(0)} & 1 & & & & & & & \\
\vdots & a_{-q-1}^{(0)} & 1 & & \\
a_{-p-q}^{(p)} & \vdots & a_{-q-2}^{(0)} & \ddots & \\
 & a_{-p-q-1}^{(p)} & \vdots & \ddots & \\
 &  & a_{-p-q-2}^{(p)} & & \\
 & & & \ddots & \\
 &   
\end{pmatrix}\endgroup.
\end{equation}
We introduce the resolvent functions
\begin{align}
\phi_{j}^{(q)}(z) & :=\langle(zI-\mathcal{H}_{q})^{-1} e_{j},e_{0}\rangle=\sum_{n=0}^{\infty}\frac{\langle\mathcal{H}_{q}^{n}\, e_{j},e_{0}\rangle}{z^{n+1}},\qquad 0\leq j\leq p,\quad q\geq 0,\label{def:resolphij}\\
\beta_{j}^{(q)}(z) & :=\langle(zI-\mathcal{B}_{q})^{-1} e_{j},e_{0}\rangle =\sum_{n=0}^{\infty}\frac{\langle\mathcal{B}_{q}^{n} \,e_{j},e_{0}\rangle}{z^{n+1}},\qquad 0\leq j\leq p,\quad q\geq 0,\label{def:resolbetaj}
\end{align}
understood as formal power series, which are clearly well-defined. In the case $q=0$ we write $\phi_{j}=\phi_{j}^{(0)}$, $\beta_{j}=\beta_{j}^{(0)}$. We also set $\phi_{-1}^{(q)}(z)\equiv \beta_{-1}^{(q)}(z)\equiv 1$.

Now let $\{\hat{e}_{n}\}_{n\in\mathbb{Z}}$ denote the standard basis in the space $\ell^{2}(\mathbb{Z})$, and let $D_{1}$ be the subspace formed by all finite linear combinations of the basis vectors $\hat{e}_{j}$. Let $\mathcal{W}$ be the linear operator on $D_{1}$ that acts on the standard basis vectors as follows:
\begin{equation}\label{def:operatorW}
\mathcal{W}\,\hat{e}_{n}=\hat{e}_{n-1}+\sum_{k=0}^{p}a_{n}^{(k)} \hat{e}_{n+k},\quad n\in\mathbb{Z},
\end{equation}
and let
\begin{equation}\label{def:resolpsij}
\psi_{j}(z):=\langle(zI-\mathcal{W})^{-1}\hat{e}_{j},\hat{e}_{0}\rangle=\sum_{n=0}^{\infty}\frac{\langle\mathcal{W}^{n}\,\hat{e}_{j},\hat{e}_{0}\rangle}{z^{n+1}},\qquad 0\leq j\leq p,
\end{equation}
understood again as a formal power series.

\begin{proposition}\label{prop:eqresollat}
For each $0\leq j\leq p$ and $q\geq 0$, we have the following identities:
\begin{align}
\phi_{j}^{(q)}(z) & =A_{j}^{(q)}(z),\label{eq:idphiAj}\\
\beta_{j}^{(q)}(z) & =B_{j}^{(q)}(z),\label{eq:idbetaBj}\\
\psi_{j}(z) & = W_{j}(z).\label{eq:idpsiWj}
\end{align}
\end{proposition}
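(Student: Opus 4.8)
The plan is to prove the three identities coefficient by coefficient in the variable $z$. Since each of the series $\phi_{j}^{(q)}$, $\beta_{j}^{(q)}$, $\psi_{j}$ is by definition $\sum_{n\geq 0}z^{-n-1}\langle\mathcal{H}_{q}^{n}e_{j},e_{0}\rangle$ (respectively with $\mathcal{B}_{q}$ and $\mathcal{W}$), while each of $A_{j}^{(q)}$, $B_{j}^{(q)}$, $W_{j}$ is $\sum_{n\geq 0}z^{-n-1}$ times the corresponding weight polynomial, the proposition is equivalent to the three moment identities
\[
\langle\mathcal{H}_{q}^{n}e_{j},e_{0}\rangle=A_{[n,j]}^{(q)},\qquad \langle\mathcal{B}_{q}^{n}e_{j},e_{0}\rangle=B_{[n,j]}^{(q)},\qquad \langle\mathcal{W}^{n}\hat{e}_{j},\hat{e}_{0}\rangle=W_{[n,j]},
\]
valid for every $n\geq 0$ and $0\leq j\leq p$. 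Each operator maps the domain $D_{0}$ (resp. $D_{1}$) into itself, so each matrix entry is a finite sum and the identities make sense termwise. This reduction converts the problem into the Flajolet--Viennot framework recalled in the introduction.

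The heart of the argument is a transfer-matrix expansion combined with a time-reversal bijection, which I carry out in full for $\mathcal{H}_{q}$. Expanding the $n$-th power,
\[
\langle\mathcal{H}_{q}^{n}e_{j},e_{0}\rangle=\sum_{\substack{m_{0}=j,\ m_{n}=0\\ m_{1},\dots,m_{n-1}\geq 0}}\prod_{i=1}^{n}\langle\mathcal{H}_{q}e_{m_{i-1}},e_{m_{i}}\rangle,
\]
where, by \eqref{def:opHcal}, the factor $\langle\mathcal{H}_{q}e_{m_{i-1}},e_{m_{i}}\rangle$ equals $1$ when $m_{i}=m_{i-1}-1$, equals $a_{m_{i-1}+q}^{(k)}$ when $m_{i}=m_{i-1}+k$ with $0\leq k\leq p$, and vanishes otherwise. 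To each admissible index sequence I associate the path $\gamma$ whose height at abscissa $t$ is $m_{n-t}$; this is the time reversal of the walk. Then $\gamma$ starts at $(0,0)$, ends at $(n,j)$, has all heights $\geq 0$, and, since it touches height $0$ at its starting point, $\min(\gamma)=0$, i.e. $\gamma\in\mathcal{D}_{[n,j]}$; the assignment is a bijection onto $\mathcal{D}_{[n,j]}$. Under reversal a factor $1$ (from the super-diagonal) becomes an upstep of weight $1$, while a factor $a_{m_{i-1}+q}^{(k)}$ becomes a downstep landing at height $m_{i-1}$, whose weight in the shifted path $\gamma+q$ is precisely $a_{m_{i-1}+q}^{(k)}$. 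Hence the product of matrix entries equals $w(\gamma+q)$, and summing gives $\langle\mathcal{H}_{q}^{n}e_{j},e_{0}\rangle=\sum_{\gamma\in\mathcal{D}_{[n,j]}}w(\gamma+q)=A_{[n,j]}^{(q)}$, which is \eqref{eq:idphiAj}.

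The remaining two identities follow from the same scheme with the appropriate geometry. For $\mathcal{W}$ the indices $m_{i}$ range over all of $\mathbb{Z}$, so the time-reversed walk is an unrestricted path in $\mathcal{P}_{[n,j]}$; here $a_{n}^{(k)}$ carries no shift, the product of entries is exactly $w(\gamma)$, and one obtains \eqref{eq:idpsiWj}. For $\mathcal{B}_{q}$ I read the walk forward but reflect it, setting the height at abscissa $t$ to be $-m_{t}$: because \eqref{def:opBcal} makes $\langle\mathcal{B}_{q}e_{m_{i-1}},e_{m_{i}}\rangle$ equal to $a_{-m_{i}-q}^{(k)}$ on the step $m_{i}=m_{i-1}+k$ (the weight depending on the row index $m_{i}$ rather than the column), the reflected walk lands, on each downstep, at height $-m_{i}$ with weight $a_{-m_{i}-q}^{(k)}$, which is the weight of that downstep in $\widehat{\gamma}-q$. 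Since $-m_{t}\leq 0$ and equals $0$ at $t=n$, the reflected walk is a path in $\widehat{\mathcal{D}}_{[n,j]}$, and summation gives \eqref{eq:idbetaBj}. The one point requiring care is exactly this weight bookkeeping: one must track whether an operator records its off-diagonal weight by the column index (as for $\mathcal{H}_{q}$ and $\mathcal{W}$, dictating time reversal) or by the row index (as for $\mathcal{B}_{q}$, dictating reflection into the lower half-plane), and must match the shift parameter $q$ with the vertical translation $\gamma\mapsto\gamma\pm q$ built into the definitions \eqref{eq:def:Anjshifted}--\eqref{eq:def:Bnjq}. Everything else is the routine resummation of the formal series.
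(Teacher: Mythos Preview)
Your argument is correct. Both you and the paper reduce to the coefficient identity $\langle\mathcal{H}_{q}^{n}e_{j},e_{0}\rangle=A_{[n,j]}^{(q)}$ (and its two companions), but the executions differ. The paper proceeds by induction on $n$: it proves more generally that $\langle\mathcal{H}_{q}^{n}e_{r},e_{0}\rangle$ equals the weight polynomial of the set of paths from $(0,q)$ to $(n,r+q)$ staying above height $q$, splitting off the last step of such a path and invoking the operator action \eqref{def:opHcal} for the inductive step; the same scheme, decomposing instead by the \emph{first} step, handles $\mathcal{B}_{q}$. You instead unfold $\mathcal{H}_{q}^{n}$ as a single transfer-matrix sum over index sequences $(m_{0},\ldots,m_{n})$ and exhibit a one-shot bijection to $\mathcal{D}_{[n,j]}$ via time reversal (for $\mathcal{H}_{q}$ and $\mathcal{W}$) or sign reflection $m_{t}\mapsto -m_{t}$ (for $\mathcal{B}_{q}$). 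Your approach has the virtue of making the path bijection and the weight accounting fully explicit in one stroke, and it nicely isolates the structural reason for the two different geometries: the column-indexed weights of $\mathcal{H}_{q}$ and $\mathcal{W}$ force reversal, while the row-indexed weights of $\mathcal{B}_{q}$ force reflection. The paper's induction is slightly more elementary and avoids having to name the bijection, at the price of carrying an auxiliary parameter $r$ through the induction. Either route is standard in the Flajolet--Viennot philosophy; the content is the same.
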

\begin{proof}
First we show that for any integers $n\geq 0$ and $r\geq 0$, we have
\begin{equation}\label{reloppath1}
\langle\mathcal{H}_{q}^{n}\,e_{r}, e_{0}\rangle=\sum_{\gamma\in\mathcal{S}_{1}(n,r)} w(\gamma)
\end{equation}
where $\mathcal{S}_{1}(n,r)$ is the collection of all lattice paths $\gamma$ of length $n$, with initial point $(0,q)$, terminal point $(n,r+q)$, that satisfy $\min(\gamma)=q$. We prove \eqref{reloppath1} by induction on $n$. It is clear that \eqref{reloppath1} holds for $n=0$ since in this case
\[
\langle\mathcal{H}_{q}^{n}\,e_{r}, e_{0}\rangle=\langle e_{r},e_{0}\rangle=\delta_{r,0}=\sum_{\gamma\in\mathcal{S}_{1}(0,r)}w(\gamma).
\]
Recall that the weight of a path of length zero is by definition $1$. Assume that \eqref{reloppath1} is valid for a particular $n\geq 0$ and for all $r\geq 0$. Then from \eqref{def:opHcal} we obtain
\[
\langle\mathcal{H}_{q}^{n+1} e_{r},e_{0}\rangle=\langle\mathcal{H}_{q}^{n}(\mathcal{H}_{q}\,e_{r}),e_{0}\rangle=\begin{cases}
\sum_{k=0}^{p} a_{q}^{(k)}\langle\mathcal{H}_{q}^{n} e_{k},e_{0}\rangle, & \mbox{if}\,\,r=0,\\[0.5em]
\langle\mathcal{H}_{q}^{n} e_{r-1},e_{0}\rangle+\sum_{k=0}^{p} a_{r+q}^{(k)}\langle \mathcal{H}_{q}^{n} e_{r+k},e_{0}\rangle, & \mbox{if}\,\,r\geq 1.
\end{cases}
\]
Suppose that $r=0$. It is evident that the last step of a path in $\mathcal{S}_{1}(n+1,0)$ is one of the steps
\[
(n,k+q)\rightarrow(n+1,q),\qquad 0\leq k\leq p,
\]
with weights $a_{q}^{(k)}$. Therefore, it is clear that 
\[
\sum_{\gamma\in\mathcal{S}_{1}(n+1,0)}w(\gamma)=\sum_{k=0}^{p}a_{q}^{(k)}\Big(\sum_{\gamma\in\mathcal{S}_{1}(n,k)} w(\gamma)\Big)=\sum_{k=0}^{p} a_{q}^{(k)}\langle\mathcal{H}_{q}^{n} e_{k},e_{0}\rangle=\langle\mathcal{H}_{q}^{n+1} e_{0},e_{0}\rangle
\]
where in the second equality we used by induction hypothesis.

Similarly, if $r\geq 1$, the last step of a path in $\mathcal{S}_{1}(n+1,r)$ is one of the steps
\[
(n,r+q+k)\rightarrow(n+1,r+q),\qquad -1\leq k\leq p,
\]
with weight $1$ if $k=-1$, and weights $a_{r+q}^{(k)}$ if $0\leq k\leq p$. Therefore
\begin{align*}
\sum_{\gamma\in\mathcal{S}_{1}(n+1,r)}w(\gamma) & =\sum_{\gamma\in\mathcal{S}_{1}(n,r-1)}w(\gamma)+\sum_{k=0}^{p}a_{r+q}^{(k)}\Big(\sum_{\gamma\in\mathcal{S}_{1}(n,r+k)} w(\gamma)\Big)\\
& =\langle\mathcal{H}_{q}^{n} e_{r-1},e_{0}\rangle+\sum_{k=0}^{p}a_{r+q}^{(k)}\langle\mathcal{H}_{q}^{n} e_{r+k},e_{0}\rangle\\
& =\langle\mathcal{H}_{q}^{n+1} e_{r},e_{0}\rangle
\end{align*}
where in the second equality we used the induction hypothesis. This concludes the proof of \eqref{reloppath1}. It is clear that $\mathcal{S}_{1}(n,j)=\mathcal{D}_{[n,j]}^{(q)}$ for $0\leq j\leq p$, hence $\langle\mathcal{H}_{q}^{n}\, e_{j},e_{0}\rangle=A_{[n,j]}^{(q)}$ for all $n\geq 0$ and \eqref{eq:idphiAj} follows.

For the justification of \eqref{eq:idbetaBj}, we show that for any integers $n\geq 0$ and $r\geq 0$, we have
\begin{equation}\label{reloppath2}
\langle\mathcal{B}_{q}^{n}\, e_{r}, e_{0}\rangle=\sum_{\gamma\in\mathcal{S}_{2}(n,r)} w(\gamma),
\end{equation}
where $\mathcal{S}_{2}(n,r)$ is the collection of all paths $\gamma$ of length $n$, with initial point $(0,-r-q)$, terminal point $(n,-q)$, that satisfy $\max(\gamma)=-q$. We prove \eqref{reloppath2} by induction on $n$. 

It is obvious that \eqref{reloppath2} holds for $n=0$. Assume that it holds for a particular $n\geq 0$ and for all $r\geq 0$. Then from \eqref{def:opBcal} we obtain
\[
\langle\mathcal{B}_{q}^{n+1} e_{r},e_{0}\rangle=\langle\mathcal{B}_{q}^{n}(\mathcal{B}_{q}\,e_{r}),e_{0}\rangle=\begin{cases}
\sum_{k=0}^{p} a_{-k-q}^{(k)}\langle\mathcal{B}_{q}^{n} e_{k},e_{0}\rangle, & \mbox{if}\,\,r=0,\\[0.5em]
\langle\mathcal{B}_{q}^{n} e_{r-1},e_{0}\rangle+\sum_{k=0}^{p} a_{-k-q-r}^{(k)}\langle \mathcal{B}_{q}^{n} e_{r+k},e_{0}\rangle, & \mbox{if}\,\,r\geq 1.
\end{cases}
\]
In the case $r=0$, the collection $\mathcal{S}_{2}(n+1,r)=\mathcal{S}_{2}(n+1,0)$ consists of all paths with initial point $(0,-q)$, terminal point $(n+1,-q)$, and satisfying $\max(\gamma)=-q$. So the first step of a path $\gamma\in\mathcal{S}_{2}(n+1,0)$ is one of the steps
\[
(0,-q)\rightarrow(1,-k-q),\qquad 0\leq k\leq p,
\]
with weights $a_{-k-q}^{(k)}$. The remaining portion of $\gamma$, if we shift it horizontally one unit to the left, can be identified with a path in $\mathcal{S}_{2}(n,k)$. This horizontal shift leaves the weight of a path invariant. Hence we conclude that
\[
\sum_{\gamma\in\mathcal{S}_{2}(n+1,0)}w(\gamma)=\sum_{k=0}^{p}a_{-k-q}^{(k)}\Big(\sum_{\gamma\in\mathcal{S}_{2}(n,k)} w(\gamma)\Big)=\sum_{k=0}^{p}a_{-k-q}^{(k)}\langle\mathcal{B}_{q}^{n}e_{k},e_{0}\rangle=\langle\mathcal{B}_{q}^{n+1}e_{0},e_{0}\rangle
\]
where we used the induction hypothesis in the second equality.

Now suppose that $r\geq 1$. If $\gamma$ is a path in $\mathcal{S}_{2}(n+1,r)$, then the first step of $\gamma$ is one of the steps
\[
(0,-r-q)\rightarrow(1,-k-q-r),\qquad -1\leq k\leq p,
\]
with weight $1$ if $k=-1$, and weights $a_{-k-q-r}^{(k)}$ if $0\leq k\leq p$. The remaining part of $\gamma$ is identifiable with a path in $\mathcal{S}_{2}(n,r+k)$ (after a horizontal shift one unit to the left). So we deduce that
\begin{align*}
\sum_{\gamma\in\mathcal{S}_{2}(n+1,r)}w(\gamma) & =\sum_{\gamma\in\mathcal{S}_{2}(n,r-1)}w(\gamma)+\sum_{k=0}^{p}a_{-k-q-r}^{(k)}\Big(\sum_{\gamma\in\mathcal{S}_{2}(n,r+k)} w(\gamma)\Big)\\
& =\langle\mathcal{B}_{q}^{n} e_{r-1},e_{0}\rangle+\sum_{k=0}^{p}a_{-k-q-r}^{(k)}\langle\mathcal{B}_{q}^{n} e_{r+k},e_{0}\rangle\\
& =\langle\mathcal{B}_{q}^{n+1} e_{r},e_{0}\rangle
\end{align*} 
where we used the induction hypothesis in the second equality. This concludes the proof of \eqref{reloppath2}. For $0\leq j\leq p$, we have $\mathcal{S}_{2}(n,j)=\widehat{\mathcal{D}}_{[n,j]}^{(q)}$, hence $\langle\mathcal{B}_{q}^{n} e_{j},e_{0}\rangle=B_{[n,j]}^{(q)}$ and \eqref{eq:idbetaBj} follows.

For $r\in\mathbb{Z}$ and $n\in\mathbb{Z}_{\geq 0}$ we define $\mathcal{S}_{3}(n,r)$ to be the collection of all lattice paths with initial point $(0,0)$ and terminal point $(n,r)$. We leave to the reader to check that 
\[
\langle\mathcal{W}^{n}\hat{e}_{r},\hat{e}_{0}\rangle=\sum_{\gamma\in\mathcal{S}_{3}(n,r)}w(\gamma)
\]
from which \eqref{eq:idpsiWj} can be deduced.
\end{proof}

From Proposition~\ref{prop:eqresollat} and Theorems~\ref{theo:relseriesAs} and \ref{theo:relseriesWs} we deduce:

\begin{corollary}\label{cor:relresol}
The following relations hold between the resolvent functions defined in \eqref{def:resolphij}, \eqref{def:resolbetaj}, and \eqref{def:resolpsij}. For every $k\geq 0$,
\begin{align*}
\phi_{0}^{(k)}(z) & =\frac{1}{z-a_{k}^{(0)}-\sum_{j=1}^{p}a_{k}^{(j)}\,\phi_{j-1}^{(k+1)}(z)}\\
\phi_{j}^{(k)}(z) & =\phi_{0}^{(k)}(z)\,\phi^{(k+1)}_{j-1}(z)\qquad 1\leq j\leq p.
\end{align*}
We also have
\begin{align*}
\psi_{0}(z) &=\frac{1}{z-a_{0}^{(0)}-\sum_{j=1}^{p}\sum_{k=0}^{j}a_{-k}^{(j)}\,\phi_{j-k-1}^{(1)}(z)\, \beta_{k-1}^{(1)}(z)} \\
\psi_{j}(z) & =\psi_{0}(z)\,\phi_{j-1}^{(1)}(z)\qquad 1\leq j\leq p,
\end{align*}
where in the third formula $\phi_{-1}^{(1)}(z)\equiv \beta_{-1}^{(1)}(z)\equiv 1$.
\end{corollary}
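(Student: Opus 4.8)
The plan is to obtain the corollary as a direct translation of Theorems~\ref{theo:relseriesAs} and \ref{theo:relseriesWs} into the language of resolvent functions, via the three identifications established in Proposition~\ref{prop:eqresollat}. Since all of the combinatorial work is already contained in those results, the argument is purely formal: no new lattice-path analysis is required, and the role of Proposition~\ref{prop:eqresollat} is simply to let us read every resolvent series as a weight-polynomial generating series.

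First I would invoke Proposition~\ref{prop:eqresollat}, which asserts $\phi_j^{(q)}(z)=A_j^{(q)}(z)$, $\beta_j^{(q)}(z)=B_j^{(q)}(z)$, and $\psi_j(z)=W_j(z)$ for all $0\leq j\leq p$ and $q\geq 0$. These equalities allow me to replace every resolvent function appearing in the statement by the corresponding series on the combinatorial side. For the first pair of relations I would then quote the relations \eqref{relA0kAkpone} recorded in the Remark following the proof of Theorem~\ref{theo:relseriesAs}, namely $A_0^{(k)}(z)=\bigl(z-a_k^{(0)}-\sum_{j=1}^p a_k^{(j)}A_{j-1}^{(k+1)}(z)\bigr)^{-1}$ and $A_j^{(k)}(z)=A_0^{(k)}(z)\,A_{j-1}^{(k+1)}(z)$ for $1\leq j\leq p$ and every $k\geq 0$. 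Substituting $\phi_j^{(k)}=A_j^{(k)}$ on both sides reproduces verbatim the two displayed formulas for $\phi_0^{(k)}$ and $\phi_j^{(k)}$.

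For the second pair, I would start from \eqref{eq:W0expansion} and \eqref{eq:WjW0Aj} in Theorem~\ref{theo:relseriesWs} and substitute $\psi_j=W_j$, $\phi_j^{(1)}=A_j^{(1)}$, and $\beta_j^{(1)}=B_j^{(1)}$. The convention $\phi_{-1}^{(1)}(z)\equiv\beta_{-1}^{(1)}(z)\equiv 1$ stated in the corollary coincides with the convention $A_{-1}^{(1)}(z)\equiv B_{-1}^{(1)}(z)\equiv 1$ adopted in Theorem~\ref{theo:relseriesWs}, so the boundary indices $k=0$ and $k=j$ in the double sum over $\eqref{eq:W0expansion}$ are absorbed correctly. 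This yields the formulas for $\psi_0$ and $\psi_j$.

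There is essentially no obstacle here; the only point requiring care is the index bookkeeping. One must confirm that the shift value $q=1$ occurring on the combinatorial side of Theorem~\ref{theo:relseriesWs} corresponds, under Proposition~\ref{prop:eqresollat}, to the superscript $(1)$ attached to the operators $\mathcal{H}_1$ and $\mathcal{B}_1$, and that the degenerate series $A_{-1}^{(1)}$ and $B_{-1}^{(1)}$ are consistently read as the identity series $1$. Since Proposition~\ref{prop:eqresollat} holds uniformly in $q\geq 0$ and the two sets of conventions agree, the corollary follows by substitution alone.
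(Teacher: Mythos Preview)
Your proposal is correct and matches the paper's own approach exactly: the paper simply states that the corollary follows from Proposition~\ref{prop:eqresollat} together with Theorems~\ref{theo:relseriesAs} and \ref{theo:relseriesWs} (and the shifted relations \eqref{relA0kAkpone}), which is precisely the substitution argument you describe.
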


A non-combinatorial proof of the formulas in Corollary~\ref{cor:relresol} can be found in \cite{LopPro2}. 

Let $\mathbf{F}=\mathbb{C}((z^{-1}))$. In the rest of this section we discuss the expansion in vector continued fraction of the vector of resolvent functions $(\phi_{0}(z),\ldots,\phi_{p-1}(z))=(A_{0}(z),\ldots,A_{p-1}(z))\in \mathbf{F}^{p}$ by means of the Jacobi--Perron algorithm. In particular we give a proof of \eqref{vcfphifinite}.   

In addition to the division operation \eqref{divoper}, in $\mathbf{F}^{p}$ we also define the multiplication 
\[
(x_{1},\ldots,x_{p})\cdot(y_{1},\ldots,y_{p}):=(x_{1}\,y_{1},\ldots,x_{p}\,y_{p}).
\]
For simplicity of notation, the neutral element for multiplication, which is the vector with all components equal to one, will be denoted
\[
\mathbf{1}=(1,\ldots,1).
\]
Note that
\[
\frac{(x_{1},\ldots,x_{p})}{(y_{1},\ldots,y_{p})}=(x_{1},\ldots,x_{p})\cdot\frac{\mathbf{1}}{(y_{1},\ldots,y_{p})},
\]
and if $x_{1}\neq 0$, then
\begin{equation}\label{invdiv}
(x_{1},x_{2},\ldots,x_{p})=\frac{\mathbf{1}}{(\frac{x_2}{x_1},\frac{x_3}{x_1},\ldots,\frac{x_{p}}{x_{1}},\frac{1}{x_1})}.
\end{equation}
Recall the notation \eqref{fvcf}.

To facilitate the understanding of the construction of the continued fraction for the vector $(A_{0}(z),\ldots,A_{p-1}(z))$, we give first an informal discussion of the main steps of the process. To simplify notation, in what follows we will not indicate the variable $z$ unless it is necessary. 

First, according to \eqref{eq:A0Aks}--\eqref{eq:AjA0Ajm}, we have
\begin{align}
(A_{0},A_{1},\ldots,A_{p-1}) & =\left(\frac{1}{z-\sum_{j=0}^{p}a_{0}^{(j)}A_{j-1}^{(1)}},\frac{A_{0}^{(1)}}{z-\sum_{j=0}^{p}a_{0}^{(j)}A_{j-1}^{(1)}},\ldots,\frac{A_{p-2}^{(1)}}{z-\sum_{j=0}^{p}a_{0}^{(j)}A_{j-1}^{(1)}}\right)\notag\\
& =\frac{\mathbf{1}}{(A_{0}^{(1)},\ldots,A_{p-2}^{(1)},z-\sum_{j=0}^{p} a_{0}^{(j)} A_{j-1}^{(1)})}\notag\\
& =\frac{\mathbf{1}}{(0,\ldots,0,z-a_{0}^{(0)})+(A_{0}^{(1)},\ldots,A_{p-2}^{(1)},-\sum_{j=1}^{p}a_{0}^{(j)} A_{j-1}^{(1)})}\label{eqstart}
\end{align}
recall that by definition $A_{-1}^{(1)}\equiv 1$. In the last step in the above computation we have separated the polynomial part from the principal part in the Laurent series obtained in the denominator after the division. This separation step is done in each iteration of the algorithm. Now we repeat this inversion procedure for the vector $(A_{0}^{(1)},\ldots,A_{p-2}^{(1)},-\sum_{j=1}^{p}a_{0}^{(j)} A_{j-1}^{(1)})$. Using \eqref{invdiv}, we have
\begin{gather*}
(A_{0}^{(1)},\ldots,A_{p-2}^{(1)},-\sum_{j=1}^{p}a_{0}^{(j)} A_{j-1}^{(1)})=\frac{\mathbf{1}}{\left(\frac{A_{1}^{(1)}}{A_{0}^{(1)}},\ldots,\frac{A_{p-2}^{(1)}}{A_{0}^{(1)}},-\frac{\sum_{j=1}^{p} a_{0}^{(j)}\,A_{j-1}^{(1)}}{A_{0}^{(1)}},\frac{1}{A_{0}^{(1)}}\right)}\\
=\frac{\mathbf{1}}{(A_{0}^{(2)},\ldots,A_{p-3}^{(2)},-a_{0}^{(1)}-\sum_{j=2}^{p}a_{0}^{(j)} A_{j-2}^{(2)},z-a_{1}^{(0)}-\sum_{j=1}^{p} a_{1}^{(j)} A_{j-1}^{(2)})}\\
=\frac{\mathbf{1}}{(0,\ldots,0,-a_{0}^{(1)},z-a_{1}^{(0)})+(A_{0}^{(2)},\ldots,A_{p-3}^{(2)},-\sum_{j=2}^{p} a_{0}^{(j)} A_{j-2}^{(2)},-\sum_{j=1}^{p}a_{1}^{(j)} A_{j-1}^{(2)})}
\end{gather*}
where in the second equality we applied \eqref{relA0kAkpone} for $k=1$. We do one more iteration. Applying \eqref{invdiv} and \eqref{relA0kAkpone} for $k=2$, we obtain
\begin{equation*}
(A_{0}^{(2)},\ldots,A_{p-3}^{(2)},-\sum_{j=2}^{p} a_{0}^{(j)} A_{j-2}^{(2)},-\sum_{j=1}^{p}a_{1}^{(j)} A_{j-1}^{(2)})=\frac{\mathbf{1}}{\mathbf{d}_{3}+\mathbf{v}_{3}}
\end{equation*}
where
\begin{align*}
\mathbf{d}_{3} & =(0,\ldots,0,-a_{0}^{(2)},-a_{1}^{(1)},z-a_{2}^{(0)}),\\
\mathbf{v}_{3} & =(A_{0}^{(3)},\ldots,A_{p-4}^{(3)},-\sum_{j=3}^{p}a_{0}^{(j)} A_{j-3}^{(3)},-\sum_{j=2}^{p}a_{1}^{(j)} A_{j-2}^{(3)},-\sum_{j=1}^{p}a_{2}^{(j)} A_{j-1}^{(3)}).
\end{align*}
The polynomial vectors we have obtained so far in the denominators are the vectors
\begin{gather*}
\mathbf{d}_{1}=(0,\ldots,0,z-a_{0}^{(0)})\\
\mathbf{d}_{2}=(0,\ldots,0,-a_{0}^{(1)},z-a_{1}^{(0)})\\
\mathbf{d}_{3}=(0,\ldots,0,-a_{0}^{(2)},-a_{1}^{(1)},z-a_{2}^{(0)})
\end{gather*}
and we have shown that
\[
(A_{0},A_{1},\ldots,A_{p-1})=\cfrac{\mathbf{1}}{\mathbf{d}_1+\cfrac{\mathbf{1}}{\mathbf{d}_2+\cfrac{\mathbf{1}}{\mathbf{d}_3+\mathbf{v}_3}}}.
\]
Continuing in this fashion, in each iteration we get a polynomial vector of the form
\[
\mathbf{d}_{k}:=(0,\ldots,0,-a_{0}^{(k-1)},-a_{1}^{(k-2)},\ldots,-a_{k-2}^{(1)},z-a_{k-1}^{(0)})
\]
with coefficients from the $k$th row of the matrix representation of $\mathcal{H}_{0}$, see \eqref{matrixrep}. When we reach the value $k=p$, we obtain in the denominator the expression
\[
\mathbf{d}_{p}+(-a_{0}^{(p)} A_{0}^{(p)},-\sum_{j=p-1}^{p} a_{1}^{(j)} A_{j-p+1}^{(p)},-\sum_{j=p-2}^{p} a_{2}^{(j)}A_{j-p+2}^{(p)},\ldots,-\sum_{j=1}^{p}a_{p-1}^{(j)} A_{j-1}^{(p)}).
\]
The novelty of the second vector is that the first component has the coefficient $-a_{0}^{(p)}$ in front, and this feature is not present in previous iterations. At this point, we first extract the coefficient $-a_{0}^{(p)}$ from the first component of the second vector, and apply as before the inversion procedure to the vector $(A_{0}^{(p)},-\sum_{j=p-1}^{p}a_{1}^{(j)} A_{j-p+1}^{(p)},\ldots,-\sum_{j=1}^{p}a_{p-1}^{(j)} A_{j-1}^{(p)})$, i.e., we write
\begin{equation*}
(-a_{0}^{(p)} A_{0}^{(p)},-\sum_{j=p-1}^{p} a_{1}^{(j)} A_{j-p+1}^{(p)},\ldots,-\sum_{j=1}^{p}a_{p-1}^{(j)} A_{j-1}^{(p)})=\frac{(-a_{0}^{(p)},1,\ldots,1)}{\mathbf{d}_{p+1}+\mathbf{v}_{p+1}}
\end{equation*}
where
\begin{align*}
\mathbf{d}_{p+1} & =(-a_{1}^{(p-1)},\ldots,-a_{p-1}^{(1)},z-a_{p}^{(0)}),\\
\mathbf{v}_{p+1} & =(-a_{1}^{(p)}A_{0}^{(p+1)},-\sum_{j=p-1}^{p}a_{2}^{(j)} A_{j-p+1}^{(p+1)},\ldots,-\sum_{j=1}^{p} a_{p}^{(j)}A_{j-1}^{(p+1)}),
\end{align*}
and we have used \eqref{relA0kAkpone} for $k=p$. From this point on, one can repeat this last described procedure indefinitely. 

Now we proceed to the formal justification of the construction. Let $\mathbf{c}_{k}$, $\mathbf{d}_{k}(z)$, and $\mathbf{v}_{k}(z)$ be the vectors defined in \eqref{cks}--\eqref{vks}. Note that
\[
\mathbf{v}_{k}(z):=\begin{cases}
(A_{0}^{(k)},\ldots,A_{p-k-1}^{(k)},-\sum_{j=k}^{p} a_{0}^{(j)}A_{j-k}^{(k)},\ldots,-\sum_{j=1}^{p} a_{k-1}^{(j)} A_{j-1}^{(k)}), & 0\leq k\leq p,\\[0.5em]
(-a_{k-p}^{(p)} A_{0}^{(k)},-\sum_{j=p-1}^{p} a_{k-p+1}^{(j)} A_{j-p+1}^{(k)},\ldots,-\sum_{j=1}^{p} a_{k-1}^{(j)} A_{j-1}^{(k)}), & k\geq p+1,
\end{cases}
\]
and $\mathbf{v}_{0}(z)=(A_{0}(z),\ldots,A_{p-1}(z))$. 
\begin{lemma}
For each $k\geq 0$, we have
\begin{equation}\label{lft}
\mathbf{v}_{k}(z)=\frac{\mathbf{c}_{k+1}}{\mathbf{d}_{k+1}(z)+\mathbf{v}_{k+1}(z)}.
\end{equation}
\end{lemma}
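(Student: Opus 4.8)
The plan is to verify \eqref{lft} by carrying out the division on its right-hand side explicitly via the formula \eqref{divoper} and checking, component by component, that the result agrees with $\mathbf{v}_k(z)$. Write $\mathbf{v}_k=(V_1,\dots,V_p)$, $\mathbf{c}_{k+1}=(c_1,\dots,c_p)$, and set $G:=\mathbf{d}_{k+1}(z)+\mathbf{v}_{k+1}(z)=(g_1,\dots,g_p)$. The crucial preliminary observation is that the \emph{last} entries of $\mathbf{d}_{k+1}$ and of $\mathbf{v}_{k+1}$ have the same shape in both regimes of \eqref{dks} and of the displayed definition of $\mathbf{v}_{k+1}$ preceding the lemma: one has $\mathbf{d}_{k+1}[p]=z-a_k^{(0)}$ and $\mathbf{v}_{k+1}[p]=-\sum_{j=1}^p a_k^{(j)}A_{j-1}^{(k+1)}$. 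Hence $g_p=z-a_k^{(0)}-\sum_{j=1}^p a_k^{(j)}A_{j-1}^{(k+1)}$, which by the first line of \eqref{relA0kAkpone} equals $1/A_0^{(k)}$. In particular $g_p$ is a nonzero element of $\mathbb{C}((z^{-1}))$ (its expansion begins with $z$), so the division in \eqref{lft} is well-defined and $1/g_p=A_0^{(k)}$.

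By \eqref{divoper}, the first component of $\mathbf{c}_{k+1}/G$ is $c_1/g_p=c_1A_0^{(k)}$, and for $2\le i\le p$ the $i$-th component is $c_i\,g_{i-1}/g_p=g_{i-1}A_0^{(k)}$, since $c_i=1$ for $i\ge 2$ by \eqref{cks}. Thus \eqref{lft} reduces to the scalar identities $V_1=c_1A_0^{(k)}$ and $V_{j+1}=g_jA_0^{(k)}$ for $1\le j\le p-1$. The first is immediate: for $0\le k\le p-1$ one has $c_1=1$ and $V_1=A_0^{(k)}$, while for $k\ge p$ one has $c_1=-a_{k-p}^{(p)}$ and $V_1=-a_{k-p}^{(p)}A_0^{(k)}$, so $V_1=c_1A_0^{(k)}$ in both cases. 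For the remaining identities I would multiply $g_j=\mathbf{d}_{k+1}[j]+\mathbf{v}_{k+1}[j]$ by $A_0^{(k)}$ and repeatedly use the second line of \eqref{relA0kAkpone}, namely $A_j^{(k)}=A_0^{(k)}A_{j-1}^{(k+1)}$, together with the convention $A_{-1}^{(k+1)}\equiv 1$. Concretely, when $j$ lies in the block where $\mathbf{v}_{k+1}$ has plain entries $A_\bullet^{(k+1)}$, one has $\mathbf{d}_{k+1}[j]=0$ and $A_0^{(k)}\mathbf{v}_{k+1}[j]=A_0^{(k)}A_{j-1}^{(k+1)}=A_j^{(k)}=V_{j+1}$; when $j$ lies in the block where $\mathbf{v}_{k+1}[j]$ is a sum $-\sum a^{(\cdot)}A^{(k+1)}$, the corresponding entry $-a_i^{(k-i)}$ of $\mathbf{d}_{k+1}$ supplies, after multiplication by $A_0^{(k)}$ and the convention $A_{-1}^{(k+1)}\equiv 1$, exactly the missing lowest-index term, so that $A_0^{(k)}g_j$ reproduces the full sum defining $V_{j+1}$.

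I would carry out this matching separately in the three regimes $0\le k\le p-1$, $k=p$, and $k\ge p+1$, because the position at which the two blocks of $\mathbf{v}_k$ meet, and the length of the leading string of zeros in $\mathbf{d}_{k+1}$, both shift as $k$ crosses $p$; this is also where the forms of $\mathbf{c}_{k+1}$, $\mathbf{d}_{k+1}$, and $\mathbf{v}_k,\mathbf{v}_{k+1}$ switch. The main obstacle is precisely this index bookkeeping at the transition $k=p$: one must check that the reindexing relating the sum in $V_{j+1}$ (built from $A^{(k)}$) to the sum in $\mathbf{v}_{k+1}[j]$ (built from $A^{(k+1)}$) lines up with the superscript drop furnished by $A_j^{(k)}=A_0^{(k)}A_{j-1}^{(k+1)}$, and that no term is dropped or double-counted when the shapes of the vectors change form. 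Once the three regimes are verified, the scalar identities hold and \eqref{lft} follows.
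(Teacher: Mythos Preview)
Your proposal is correct and takes essentially the same approach as the paper: both are direct verifications resting on the relations \eqref{relA0kAkpone}, with the paper going in the reverse direction via the inversion identity \eqref{invdiv} (writing $\mathbf{v}_k$ as $\mathbf{1}/(\dots)$ and simplifying) rather than expanding the right-hand side component by component. Note also that the paper needs only two regimes, $0\le k\le p-1$ and $k\ge p$, since the two displayed formulas for $\mathbf{v}_k$ agree at $k=p$, so your separate treatment of $k=p$ is harmless but unnecessary.
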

\begin{proof}
If $0\leq k\leq p-1$, applying \eqref{relA0kAkpone} we get 
\begin{gather*}
\mathbf{v}_{k}(z)=(A_{0}^{(k)},\ldots,A_{p-k-1}^{(k)},-\sum_{j=k}^{p}a_{0}^{(j)} A_{j-k}^{(k)},\ldots,-\sum_{j=1}^{p}a_{k-1}^{(j)} A_{j-1}^{(k)})\\
=\frac{\mathbf{1}}{\left(\frac{A_{1}^{(k)}}{A_{0}^{(k)}},\ldots,\frac{A_{p-k-1}^{(k)}}{A_{0}^{(k)}},-\frac{\sum_{j=k}^{p}a_{0}^{(j)} A_{j-k}^{(k)}}{A_{0}^{(k)}},\ldots,-\frac{\sum_{j=1}^{p} a_{k-1}^{(j)} A_{j-1}^{(k)}}{A_{0}^{(k)}},\frac{1}{A_{0}^{(k)}}\right)}=\frac{\mathbf{c}_{k+1}}{\mathbf{d}_{k+1}(z)+\mathbf{v}_{k+1}(z)}.
\end{gather*}
If $k\geq p$, then
\begin{gather*}
\mathbf{v}_{k}(z)=(-a_{k-p}^{(p)},1,\ldots,1)\,(A_{0}^{(k)},-\sum_{j=p-1}^{p}a_{k-p+1}^{(j)} A_{j-p+1}^{(k)},\ldots,-\sum_{j=1}^{p} a_{k-1}^{(j)} A_{j-1}^{(k)})\\
=\frac{(-a_{k-p}^{(p)},1,\ldots,1)}{\left(-\frac{\sum_{j=p-1}^{p}a_{k-p+1}^{(j)}A_{j-p+1}^{(k)}}{A_{0}^{(k)}},\ldots,-\frac{\sum_{j=1}^{p}a_{k-1}^{(j)}A_{j-1}^{(k)}}{A_{0}^{(k)}},\frac{1}{A_{0}^{(k)}}\right)}=\frac{\mathbf{c}_{k+1}}{\mathbf{d}_{k+1}(z)+\mathbf{v}_{k+1}(z)}.
\end{gather*}
\end{proof}
It follows from \eqref{lft} that for every $n\geq 1$,
\[
(A_{0}(z),\ldots,A_{p-1}(z))=\Kont_{m=1}^{n}\Bigl(\frac{\mathbf{c}_{m}}{\widetilde{\mathbf{d}}_{m}(z)}\Bigr)
\]
where $\widetilde{\mathbf{d}}_{m}(z)=\mathbf{d}_{m}(z)$ if $m\leq n-1$ and $\widetilde{\mathbf{d}}_{n}(z)=\mathbf{d}_{n}(z)+\mathbf{v}_{n}(z)$. It is in this sense that we write the formal identity of Kalyagin: 
\[
(A_{0}(z),\ldots,A_{p-1}(z))=\Kont_{n=1}^{\infty}\Bigl(\frac{\mathbf{c}_{n}}{\mathbf{d}_{n}(z)}\Bigr).
\]

In the particular case of the functions $S_{j}(z)$ defined in \eqref{fsSj} (see also \eqref{SRedges} and \eqref{eq:ananp}) associated with the generalized Stieltjes--Rogers polynomials, the vector continued fraction takes the form
\[
(S_{0}(z),\ldots,S_{p-1}(z))=\Kont_{n=1}^{\infty}\Bigl(\frac{\widehat{\mathbf{c}}_{n}}{\widehat{\mathbf{d}}_{n}(z)}\Bigr)
\]
where
\begin{align*}
\widehat{\mathbf{c}}_{n} & =\begin{cases}
\mathbf{1}, & 1\leq n\leq p,\\
(-a_{n-p-1},1,\ldots,1), & n\geq p+1,
\end{cases}\\
\widehat{\mathbf{d}}_{n}(z) & =(0,\ldots,0,z), \quad n\geq 1.
\end{align*}
It was shown by Aptekarev-Kaliaguine-Van Iseghem in \cite[Thm. 3]{AptKalVan} that an alternative formal expansion can be given for the vector $(S_{0}(z),\ldots,S_{p-1}(z))$. It follows from
\[
(S_{0}(z),\ldots,S_{p-1}(z))=\frac{\mathbf{1}}{(0,\ldots,0,z)+(1,\ldots,1,-a_{0})\,(S_{0}^{(1)}(z),\ldots,S_{p-1}^{(1)}(z))},
\]   
see \eqref{eqstart}. Since a similar relation holds between the functions $S_{j}^{(k)}$ and $S_{j}^{(k+1)}$ for any $k$, we get
\[
(S_{0}(z),\ldots,S_{p-1}(z))=\cfrac{\mathbf{1}}{(0,\ldots,0,z)+\cfrac{(1,\ldots,1,-a_{0})}{(0,\ldots,0,z)+\cfrac{(1,\ldots,1,-a_{1})}{\ddots}}}.
\]

\bigskip

\noindent\textbf{Acknowledgements:} We thank the referee for his comments to improve the presentation of this paper.

\bigskip

\noindent \textsc{Department of Mathematics, University of Central Florida, 4393 Andromeda Loop North, Orlando, FL 32816, USA} \\
\textit{Email address}: \texttt{abey.lopez-garcia\symbol{'100}ucf.edu}

\bigskip

\noindent \textsc{Department of Mathematics and Statistics, University of South Alabama, 411 University Boulevard North, 
Mobile, AL 36688, USA}\\
\textit{Email address}: \texttt{prokhoro\symbol{'100}southalabama.edu}

\end{document}